\newcommand{\noun}[1]{\textsc{#1}}
\def\RSthmtxt{theorem~}\newref{thm}{name = \RSthmtxt}}
\def\RSlemtxt{lemma~}\newref{lem}{name = \RSlemtxt}}
\numberwithin{equation}{section}
\numberwithin{figure}{section}
\theoremstyle{definition}
\newtheorem*{example*}{\protect\examplename}
\theoremstyle{plain}
\newtheorem{thm}{\protect\theoremname}
\theoremstyle{plain}
\newtheorem{assumption}[thm]{\protect\assumptionname}
\theoremstyle{plain}
\newtheorem{lem}[thm]{\protect\lemmaname}
\theoremstyle{definition}
\newtheorem{defn}[thm]{\protect\definitionname}
\theoremstyle{remark}
\newtheorem{rem}[thm]{\protect\remarkname}
\theoremstyle{remark}
\newtheorem*{rem*}{\protect\remarkname}
\theoremstyle{plain}
\newtheorem{prop}[thm]{\protect\propositionname}
\theoremstyle{plain}
\newtheorem*{prop*}{\protect\propositionname}
\theoremstyle{definition}
\newtheorem{example}[thm]{\protect\examplename}
\theoremstyle{remark}
\newtheorem{claim}[thm]{\protect\claimname}
\theoremstyle{plain}
\newtheorem{criterion}[thm]{\protect\criterionname}
\theoremstyle{plain}
\newtheorem*{fact*}{\protect\factname}
\theoremstyle{plain}
\newtheorem*{thm*}{\protect\theoremname}
\newcommand{\oublie}[1]{}
\providecommand{\assumptionname}{Assumption}
\providecommand{\claimname}{Claim}
\providecommand{\criterionname}{Criterion}
\providecommand{\definitionname}{Definition}
\providecommand{\examplename}{Example}
\providecommand{\factname}{Fact}
\providecommand{\lemmaname}{Lemma}
\providecommand{\propositionname}{Proposition}
\providecommand{\remarkname}{Remark}
\providecommand{\theoremname}{Theorem}
\begin{document}
\global\long\def\R{\mathbb{R}}%
\global\long\def\ii{\mathrm{i}}%
\global\long\def\N{\mathbb{N}}%
\global\long\def\supp{{\rm supp}}%
\global\long\def\span{\operatorname{span}}%
\global\long\def\div{\operatorname{div}}%
\global\long\def\phi{\varphi}%
\global\long\def\epsilon{\varepsilon}%

\global\long\def\Laplacian{\Delta}%

\global\long\def\dstar{d^{\star}}%

\global\long\def\one{\mathbf{1}}%
\global\long\def\consi{\mathbf{i}}%

\global\long\def\log{\text{ln}}%

\global\long\def\rank{\operatorname{rank}}%

\global\long\def\Id{\text{Id}}%

\global\long\def\C{\mathcal{C}}%

\global\long\def\Holder{H\ddot{o}lder}%

\global\long\def\Poincare{Poincar\acute{e}}%

\global\long\def\ker{\text{ker}}%

\global\long\def\Im{\text{Im}}%

\global\long\def\HH{\mathbf{H}}%
\global\long\def\JacJ{\mathbf{J}}%
\global\long\def\JacSJ{\mathbf{J_{f}}}%

\global\long\def\translation{\kappa}%

\global\long\def\translationconstant{\vartheta}%

\global\long\def\pr{\text{pr}}%

\title[Jacobian constraints with discontinuous coefficients]{Positive jacobian constraints for elliptic boundary value problems
with piecewise-regular coefficients arising from multi-wave inverse
problems}
\author{Yves Capdeboscq}
\author{Tianrui Dai}
\address{Université Paris Cité, CNRS, Sorbonne Université, Laboratoire Jacques-Louis
Lions (LJLL), F-75006 Paris, France}
\email{yves.capdeboscq@u-paris.fr}
\email{tianrui.dai@etu.u-paris.fr}
\subjclass[2000]{[2010]\{35J25, 35B38, 35R30\}}
\begin{abstract}
Multi-wave inverse problems are indirect imaging methods using the
interaction of two different imaging modalities. One brings spatial
accuracy, and the other contrast sensitivity. The inversion method
typically involve two steps. The first step is devoted to accessing
internal datum of quantities related to the unknown parameters being
observed. The second step involves recovering the parameters themselves
from the internal data. To perform that inversion, a typical requirement
is that the Jacobian of fields involved does not vanish. A number
of authors have considered this problem in the past two decades, and
a variety of methods have been developed. Existing techniques require
Hölder continuity of the parameters to be reconstructed. In practical
applications, the medium may present embedded elements, with distinct
physical properties, leading to discontinuous coefficients. In this
article we explain how a Jacobian constraint can imposed in the piecewise
regular case, when the physical model is a divergence form second
order linear elliptic boundary value problem.
\end{abstract}

\keywords{Hybrid inverse problems, coupled-physics imaging, photo-acoustic tomography,
non-zero constraints, Runge approximation, elliptic equations, Whitney
reduction, unique continuation.}
\thanks{This study contributes to the IdEx Université de Paris ANR-18-IDEX-0001.}
\maketitle

\section{Introduction}

Parameter reconstruction problems for elliptic boundary value problems
are indirect reconstruction problems with, at best, logarithmic stability
\cite{zbMATH03998383,MANDACHE-01}. While these type of measurements
are desirable as they are non intrusive, and typically require low
cost apparels, such weak stability implies that only low-resolution
reconstruction can be achieved in practice \cite{HANDBOOK-MMI2-2015,WANG-ANASTASIO-HANDBOOK-2011,WIDLAK-SCHERZER-2012}.
The Calderón problem for electrical impedance tomography (EIT) \cite{CALDERON-1980,UHLMANNIP-2009,zbMATH06457660},
the inverse scattering problems \cite{COLTON-KRESS-98} and optical
tomography \cite{1999-arridge} are the main examples of such problems.
The stability of such methods dramatically improve when, instead of
making absolute measurements \textit{ex nihilo}, they are used to
estimate perturbations of a know medium \cite{AMMARI-KANG-04,CAPDEBOSCQ-VOGELIUS-03A}.
On the other hand, fast wave imaging modalities, such as ultrasound
tomography or MRI, preserve singularities and achieve excellent spatial
accuracy, at the cost of a loss of quantitative information with respect
to the amplitude of the parameters involved.

Over the past two decades, coupled-physics, or multi-wave, or hybrid
inverse problems (the final commonly accepted name is yet to be determined)
have emerged. These imaging modalities aim to benefit from the advantages
of both approaches: one for accurate contrast estimations, and the
other for high resolution \cite{alberti-capdeboscq-2018,2017-ammari-et-al,BAL-2012,kuchment-2012}.
The most developed hybrid modality is photo-acoustic tomography (PAT)
\cite{WOS:000571255900003,KUCHMENT-KUNYANSKY-HANBOOK-2011,2009-razansky,WANG-ANASTASIO-HANDBOOK-2011},
in which light and ultrasounds are combined. Many other modalities
have been considered, all combining a diffusive process with a much
less diffusive one \cite{zbMATH06924651,AMMARI-BONNETIER-CAPDEBOSCQ-TANTER-FINK-2008,AMMARI-CAPDEBOSCQ-DEGOURNAY-ROZANOVA-TRIKI-2011,hasanov-2004,KUCHMENT-KUNYANSKY-HANBOOK-2011,LAVANDIER-JOSSINET-GATHIGNOL-00,seo-kim-etal-2012,seo-woo-2011,WANG-ANASTASIO-HANDBOOK-2011,ZHANG-WANG-2004}.

The parameter reconstruction method in all these problems start with
a data collection step, where some internal data is reconstructed,
involving both the parameter of interest and the solution of the PDE
involving this parameter. In PAT, the internal data is $\mu(x)u(x)$
, where $\mu$ is the optical absorption and $u$ is the light intensity.
In Current Density Impedance Imaging (CDII), the internal data is
$\left|\gamma(x)\nabla u(x)\right|$, where $\gamma$ is the conductivity
and $u$ is the electric field. The second step involves extracting
the parameter from this data, ($\mu(x)$ in PAT, $\gamma$ in CDII).
The mathematical problem considered in this article is related to
this second step.

\begin{example*}[A Jacobian constraint example]
\label{ex:scalar reconstructing} Consider the problem of reconstructing
$\gamma$, a scalar function, in 
\[
-\div(\gamma Du)=0\quad\text{in }\Omega,
\]
from the knowledge of the potential $u$ in $\Omega$, as in \cite{alessandrini-1986}.
It appears in a variety of contexts, such has Hydrology \cite{neuman-yakowitz-1979},
CDII \cite{bal-guo-monard-2014,NTT-Rev-11,SCOTT-JOY-ARMSTRONG-HENKELMAN-1991,WOO-LEE-SY-MUN-1994}
and Acousto--Electric Tomography \cite{zbMATH06924651,AMMARI-BONNETIER-CAPDEBOSCQ-TANTER-FINK-2008,CAPDEBOSCQ-FEHRENBACH-DEGOURNAY-KAVIAN-09}.
If $\gamma$ is regular, we have
\begin{equation}
D(\log\gamma)\cdot Du=-\Delta u\quad\text{in }\Omega.\label{eq:log a}
\end{equation}
Suppose given $d$ measurements $u_{1},\dots,u_{d}$. By (\ref{eq:log a})
we obtain 
\[
\left(D(\log\gamma)\right)^{T}\begin{bmatrix}Du_{1}, & \cdots & ,Du_{d}\end{bmatrix}=-\begin{bmatrix}\Delta u_{1}, & \cdots & ,\Delta u_{d}\end{bmatrix}\quad\text{in }\Omega.
\]
If $\det\begin{bmatrix}Du_{1}, & \cdots & ,Du_{d}\end{bmatrix}>0$
holds true, then $\nabla(\log\gamma)$, and in turn $\gamma$ up to
a multiplicative constant, are explicitly readable from the data by
inverting the matrix $\begin{bmatrix}Du_{1}, & \cdots & ,Du_{d}\end{bmatrix}$.
More generally, given $N\geq d$ measurements, the least-square optimisation
problem associated to the (possibly overdetermined) system of equations
\[
\left(D(\log\gamma)\right)^{T}\begin{bmatrix}Du_{1}, & \cdots & ,Du_{N}\end{bmatrix}=-\begin{bmatrix}\Delta u_{1}, & \cdots & ,\Delta u_{N}\end{bmatrix}\quad\text{in }\Omega.
\]
has a unique minimiser when $\det\begin{bmatrix}Du_{i_{i}}, & \cdots & ,Du_{i_{d}}\end{bmatrix}>0$
for some $\left(i_{1},\ldots,i_{d}\right)\in\left\{ 1,\ldots,N\right\} ^{d}$.
\end{example*}
Using unique continuation methods, it is possible to address the parameter
reconstruction problem without imposing Jacobian constraints \cite{alessandrini-2014,2015-alessandrini-dicristo-francini-vessella,zbMATH07537315,zbMATH07468759,zbMATH07096506}.
On the other hand, when non-vanishing constraints are satisfied, the
stability estimates are optimal (of Lipschitz type) and often lead
to explicit reconstruction formulae \cite{alberti-capdeboscq-2018,BAL-2012}.

The focus of this paper is non-vanishing Jacobian constraints. In
two dimensions, for the conductivity equation, a generalisation of
the Radó--Kneser--Choquet theorem \cite{alessandrini-1986,ALESSANDRINI-NESI-01,ALESSANDRINI-NESI-2015,Bauman-Marini-Nesi-2001}
shows that imposing a non-vanishing Jacobian constraint globally,
and independently of the conductivity is possible : in practice, it
suffices to verify that the Jacobian doesn't vanish when the conductivity
is equal to one everywhere. Such an approach cannot be extended to
three dimensions, or more general elliptic problems \cite{CAPDEBOSCQ-15,WOOD-1991}.
Suitable solutions can be constructed using complex geometrical optics
solutions (CGOS) \cite{BAL-2012,BAL-BONNETIER-MONARD-TRIKI-2013,BAL-REN-2011,BAL-UHLMANN-2010,BAL-UHLMANN-2013},
but this construction depends on the unknown coefficients, which must
be smooth and isotropic. Another approach \cite{alberti-capdeboscq-2018,BAL-UHLMANN-2013}
is based on the Runge approximation \cite{LAX-1956,MALGRANGE-1955-56}.
It is valid for all PDE for which a Unique Continuation Property holds,
it allows for anisotropic coefficients, and the smoothness assumptions
are precisely that for the Unique Continuation Property of the underlying
equation, namely Lipschitz regularity or Hölder continuity depending
on the equation. By combining this approach with the Whitney projection
method, it is proved in \cite{2020-Alberti-Capdeboscq-IMRN} that
the set of suitable solutions is open and dense, with explicit estimates
on the number of solutions needed. A very related result, using a
slightly different Whitney projection argument, was proved independently
around the same time \cite{CEKIC-LIN-RULAND-2020}. Very recently,
another approach was proposed, which showed that choosing random boundary
values was possible \cite{Alberti_2022}.

All these methods rely on some regularity of the coefficients. In
practical cases, it is desirable to consider the case of piecewise
regular coefficients, each region corresponding to a different strata
in geology, or a different organ in medical imaging.

In this work, we show how the approach introduced in \cite{2020-Alberti-Capdeboscq-IMRN}
can be extended to the case of piecewise regular coefficients. We
use existing unique continuation results within the regular parts
of the domain, \cite{zbMATH03168004,LAX-1956,MALGRANGE-1955-56},
and introduce adequate quantities to cross over discontinuities. These
constructions may prove useful for other models where the principal
part is in divergence form. 

In \secref{Main-results} we detail our assumptions, state the main
result of this article, and explain its proof, using intermediate
results proved in the subsequent sections. In \cite{2020-Alberti-Capdeboscq-IMRN},
Hölder continuity is crucially used in two instances : to show existence
of solutions satisfying the adequate constraints via Runge Approximation,
and to use the Whitney projection method, which is based on Sard's
lemma, which itself uses Hölder continuity. As a result, our developments
come in two parts. In \secref{construction} we establish the existence
a finite number of solutions such that the non vanishing Jacobian
constraint is satisfied in the whole domain : this requires adapting
existing unique continuation results to cross smooth interfaces. In
\secref{Whitney} we use the continuity of fluxes across interfaces
resulting from the divergence structure of the principal part, via
appropriate charting, to deduce non-vanishing properties of gradients
up to the internal subregion boundaries. 

\section{Model, Assumptions and Main Results\label{sec:Main-results}}

\subsection{Problem definition\label{subsec:model-and-domain}}

The ambient space is $\mathbb{R}^{d}$, with $d\geq2$.
\begin{assumption}
\label{assu:Piecewise_assumption_Domain} Assume that $\Omega$ is
an open, bounded and connected domain in $\mathbb{R}^{d}$ with a
$C^{2}$ boundary.

Assume that $\Omega$ contains $N\geq1$ open connected disjoint sets
$\Omega_{1},\ldots,\Omega_{N}$ with $C^{2}$ boundaries such that
$0<\text{d\ensuremath{\left(\cup_{\ell=1}^{N}\Omega_{i},\mathbb{R}^{d}\setminus\Omega\right)}}$.

Assume furthermore that for any $i\in\left\{ 1,\dots,N\right\} ,$
$\Omega_{i}$ has a $C^{2}\left(\mathbb{R}^{d-1}\right)$ boundary,
and each connected component of its boundary is in common with at
most one other $\Omega_{j},$ $j\neq i$.

We write $\Omega_{N+1}=\Omega\setminus\overline{\left(\cup_{i=1}^{N}\Omega_{i}\right)}$,
and denote $\Gamma_{ij}=\partial\Omega_{i}\cap\partial\Omega_{j}$
when this set is non-empty.

Additionally, assume that each $\Gamma_{ij}$ is sphere-like, that
is, there exists an open neighbourhood $U_{ij}$ of $\Gamma_{ij}$,
an open neighbourhood $V_{ij}$ of $S^{d-1}$, and a $C^{2}$ diffeomorphism
$\psi_{ij}:U_{ij}\rightarrow V_{ij}$ such that $\psi_{ij}\left(\Gamma_{ij}\right)=S^{d-1}$.

Moreover, there exists $d_{0}>0$ such that
\begin{equation}
\forall i,j\in\left\{ 1,\ldots,N+1\right\} ^{2},i\neq j,\text{ if }\partial\Omega_{i}\setminus\Gamma_{ij}\neq\emptyset\text{ then }d\left(\text{\ensuremath{\Gamma_{ij},\partial\Omega_{i}\setminus\Gamma_{ij}}}\right)>d_{0}.\label{eq:defkappa}
\end{equation}
\end{assumption}

An example of such a configuration is given in \figref{5inclusionstart}.

Following the usual notation, given a set $U$ we write
\[
\one_{U}:x\to\begin{cases}
1 & \text{ if }x\in U,\\
0 & \text{otherwise.}
\end{cases}
\]

\begin{assumption}
\label{assu:regularitecoeff_} Given $\alpha\in\left(0,1\right]$,
for each $i\in\left\{ 1,\cdots,N+1\right\} $ let $A_{i}\in C^{0,\alpha}\left(\mathbb{R}^{d};\mathcal{M}_{d}^{s}\left(\mathbb{R}\right)\right)$
be a symmetric-matrix-valued function which is uniformly elliptic,
that is, there exists $\lambda>0$ such that for all $x\in\Omega$,
and all $\zeta\in\mathbb{R}^{d},$
\begin{equation}
\lambda\left|\zeta\right|^{2}<A\left(x\right)\zeta\cdot\zeta.\label{eq:elliptic}
\end{equation}
For each $i\in\left\{ 1,\cdots,N+1\right\} $, let $b_{i}\in C^{0,\alpha}\left(\mathbb{R}^{d};\mathbb{R}^{d}\right)$,
$c_{i}\in C^{0,\alpha}\left(\mathbb{R}^{d};\mathbb{R}^{d}\right)$
and $q_{i}\in C^{0,\alpha}\left(\mathbb{R}^{d};\mathbb{R}\right)$,
be such that 
\[
\max\left(\left\Vert A_{i}\right\Vert _{_{C^{0,\alpha}\left(\mathbb{R}^{d};\mathbb{R}^{d\times d}\right)},}\left\Vert b_{i}\right\Vert _{_{C^{0,\alpha}\left(\mathbb{R}^{d};\mathbb{R}^{d}\right)}},\left\Vert c_{i}\right\Vert _{_{C^{0,\alpha}\left(\mathbb{R}^{d};\mathbb{R}^{d}\right)}},\left\Vert q_{i}\right\Vert _{_{C^{0,\alpha}\left(\mathbb{R}^{d};\mathbb{R}\right)}}\right)\leq\lambda^{-1},
\]
where for $n\geq1$,
\[
\left\Vert f\right\Vert _{_{C^{0,\alpha}\left(\mathbb{R}^{d};\mathbb{R}^{n}\right)}}=\sup_{\mathbb{R}^{d}}\left|f\right|+\sup_{{x\neq y\in\mathbb{R}^{d}\atop 0\neq\zeta\in\mathbb{R}^{n}}}\frac{\left|f\left(x\right)\cdot\zeta-f\left(y\right)\cdot\zeta\right|}{\left|x-y\right|\left|\zeta\right|}.
\]
Finally, when $d\geq3$, we assume additionally that $A_{i}\in C^{0,1}\left(\mathbb{R}^{d};\mathcal{M}_{d}^{s}\left(\mathbb{R}\right)\right)$\footnote{So that the Unique Continuation Property holds in each subdomain.
This assumption can be relaxed when $\left(A_{i}^{k\ell}(x)\right)_{1\leq k,\ell\leq d}=\left(a_{i}(x)\delta_{k\ell}\right)_{1\leq k,\ell\leq d}$
for all $x$.}.

We write, for all $x\in\Omega\setminus\cup_{i,j}\Gamma_{ij}$, 
\begin{equation}
A=\sum_{i=1}^{N+1}A_{i}\one_{\Omega_{i}},\quad b=\sum_{i=1}^{N+1}b_{i}\one_{\Omega_{i}},\quad c=\sum_{i=1}^{N+1}c_{i}\one_{\Omega_{i}},\text{ and }q=\sum_{i=1}^{N+1}q_{i}\one_{\Omega_{i}}.\label{eq:defAbcq}
\end{equation}
\end{assumption}

We consider a second order elliptic operator of the form $L:u\to-\div\left(ADu+bu\right)+c\cdot Du+qu$,
and the PDE under consideration is 
\begin{equation}
Lu=0\quad\text{in }\Omega.\label{eq:PDE}
\end{equation}

Thanks to assumption~\ref{assu:regularitecoeff_} the weak solutions
of \eqref{PDE} enjoy additional regularity within each subdomain
$\Omega_{i}$, $i=1,\cdots,N+1$. \lemref{regula-global} follows
from classical regularity results, see e.g. \cite[Theorem 5.19 and 5.20]{GIAQUINTA-93}
for a modern exposition.
\begin{lem}
\label{lem:regula-global}If $u\in H^{1}\left(\Omega\right)$ is a
weak solution of \eqref{PDE}, such that there exists $g\in C^{1,\alpha}\left(\overline{\Omega}\right)$
such that $u-g\in H_{0}^{1}\left(\Omega\right)$ and for any $v\in H_{0}^{1}\left(\Omega\right)$
there holds 
\[
\int_{\Omega}ADu\cdot Dv\text{ d}x+\int_{\Omega}ub\cdot Dv\text{ d}x+\int_{\Omega}vc\cdot Du\text{ d}x+\int_{\Omega}quv\text{ d}x=0.
\]
 Then, 
\begin{equation}
u\in H^{1}\left(\Omega\right)\cap\left(\cup_{i=1}^{N+1}C^{1,\alpha}\left(\Omega_{i}\right)\right)=:H\left(\Omega\right),\label{eq:defhom}
\end{equation}
 and 
\[
\sum_{i=1}^{N+1}\left(\left\Vert Du\right\Vert _{C^{0,\alpha}\left(\Omega_{i}\right)}+\left\Vert u\right\Vert _{C^{0,\alpha}\left(\Omega_{i}\right)}\right)\leq C\left(\left\Vert u\right\Vert _{L^{2}\left(\Omega\right)}+\left\Vert g\right\Vert _{C^{1,\alpha}\left(\overline{\Omega}\right)}\right),
\]
where the constant $C$ depends on $\lambda$ given in assumption~\ref{assu:regularitecoeff_}
and $\Omega_{i}$, $i=\left\{ 1,\cdots,N+1\right\} $ only.
\end{lem}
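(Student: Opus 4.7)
The plan is a classical piecewise Schauder bootstrap, performed subdomain by subdomain. I would start by establishing a global $L^{\infty}$ and Hölder estimate via De Giorgi--Nash--Moser theory: although the coefficients $A, b, c, q$ are only piecewise $C^{0,\alpha}$, they are globally bounded by $\lambda^{-1}$ and $A$ satisfies the uniform ellipticity \eqref{eq:elliptic}. Combined with the $C^{1,\alpha}$ Dirichlet datum $g$ and the $C^{2}$ regularity of $\partial \Omega$, this yields $u \in L^{\infty}(\Omega) \cap C^{0,\beta}(\overline{\Omega})$ for some $\beta \in (0,1)$ depending only on $\lambda$ and $d$, with bound by $\|u\|_{L^{2}(\Omega)}+\|g\|_{C^{1,\alpha}(\overline{\Omega})}$. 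This global continuity turns $u$ into an admissible right-hand side for any perturbative argument on the interior pieces.

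Next, inside each open $\Omega_{i}$ the coefficients $A|_{\Omega_{i}}, b|_{\Omega_{i}}, c|_{\Omega_{i}}, q|_{\Omega_{i}}$ coincide with the restrictions of the globally $C^{0,\alpha}$ functions $A_{i}, b_{i}, c_{i}, q_{i}$ of Assumption \ref{assu:regularitecoeff_}, so $u$ solves an elliptic equation with $C^{0,\alpha}$ coefficients on each piece. Classical interior Schauder estimates (e.g.\ those in \cite{GIAQUINTA-93}) then give $u \in C^{1,\alpha}_{\mathrm{loc}}(\Omega_{i})$ with bounds controlled by the $L^{\infty}$ norm of $u$ established in the first step.

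The substantive step is to upgrade this to a uniform $C^{1,\alpha}$ estimate on each $\Omega_{i}$ up to its boundary. At portions of $\partial \Omega_{i}$ lying on the external boundary $\partial \Omega$ (only for $i=N+1$), standard up-to-boundary Schauder estimates apply since $\partial \Omega$ is $C^{2}$ and $g \in C^{1,\alpha}(\overline{\Omega})$. At portions lying on an internal interface $\Gamma_{ij}$, we face a transmission problem: $u \in H^{1}(\Omega)$ is continuous across $\Gamma_{ij}$, and the divergence structure of the principal part implies that the conormal flux $ADu\cdot \nu + (b\cdot \nu)u$ is also continuous in the weak sense across $\Gamma_{ij}$. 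Flattening $\Gamma_{ij}$ via a $C^{2}$ diffeomorphism (provided by Assumption \ref{assu:Piecewise_assumption_Domain}) and running a Campanato--Morrey iteration on each half-ball — exploiting the two transmission identities — produces $C^{1,\alpha}$ estimates up to $\Gamma_{ij}$ from each side, which is precisely the content of Theorems 5.19 and 5.20 of \cite{GIAQUINTA-93}. A finite covering of each $\overline{\Omega_{i}}$, whose cardinality is controlled by $N$ and the positive separation $d_{0}$ of Assumption \ref{assu:Piecewise_assumption_Domain}, then glues the local estimates into the claimed global bound. The genuine difficulty lies in the transmission step at $\Gamma_{ij}$, but since the interfaces are $C^{2}$ and the coefficients are $C^{0,\alpha}$ on each side, it reduces to a direct application of the classical reference; the rest is standard bookkeeping.
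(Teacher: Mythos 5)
Your proposal is correct and follows essentially the same route as the paper, which offers no detailed argument of its own and simply invokes the classical regularity results you rely on (see \cite[Theorems 5.19 and 5.20]{GIAQUINTA-93}); your outline merely makes explicit the underlying De Giorgi--Nash global H\"older bound, interior Schauder estimates in each $\Omega_{i}$, and the interface-flattening transmission estimates, glued by a finite covering. No gap to report.
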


We are now in position to define the quantity of interest in this
paper.
\begin{defn}[\emph{Non-vanishing Jacobian solutions}]
\label{def:NVJacobianSolutions} Given $P\geq d+1$, we call $\left\{ u_{i}^{x_{0}}\right\} _{i=1}^{P}\in H\left(\Omega\right)^{P}$
(a group of) non-vanishing Jacobian solutions of \eqref{PDE} at $x_{0}\in\Omega\setminus\cup_{i,j}\Gamma_{ij}$,
if
\begin{enumerate}
\item for $i=1,\ldots,P$ there holds $Lu_{i}^{x_{0}}=0$ in $\Omega$,
\item The solutions $\left\{ u_{i}^{x_{0}}\right\} _{i=1}^{P}\in H\left(\Omega\right)^{P}$
satisfy $\rank\left(\JacJ\left(u_{1}^{x_{0}},\cdots,u_{P}^{x_{0}}\right)\left(x_{0}\right)\right)=d+1$,
where 
\[
\JacJ\left(u_{1},\ldots,u_{P}\right)\left(x\right)\coloneqq\left(\begin{array}{c}
Du_{1}\\
\vdots\\
Du_{P}
\end{array}\begin{array}{c}
u_{1}\\
\vdots\\
u_{P}
\end{array}\right)\left(x\right)=\left(\begin{array}{ccc}
\partial_{1}u_{1} & \ldots & \partial_{d}u_{1}\\
\vdots & \vdots & \vdots\\
\partial_{1}u_{P} & \ldots & \partial_{d}u_{P}
\end{array}\begin{array}{c}
u_{1}\\
\vdots\\
u_{P}
\end{array}\right)\left(x\right).
\]
\end{enumerate}
\end{defn}

\begin{rem}
Thanks to \lemref{regula-global}, pointwise values of $\JacJ\left(u_{1}^{x_{0}},\cdots,u_{P}^{x_{0}}\right)$
are well defined at any $x\in\Omega\setminus\cup_{i,j}\Gamma_{ij}$.
The use of the word `Jacobian' for the quantity $\JacJ$ may seem
abusive. Indeed one would expect a Jacobian to be 
\[
\left(\begin{array}{c}
Dv_{1}\\
\vdots\\
Dv_{d}
\end{array}\right)=\left(\begin{array}{ccc}
\partial_{1}v_{1} & \ldots & \partial_{d}v_{1}\\
\vdots & \vdots & \vdots\\
\partial_{1}v_{d} & \ldots & \partial_{d}v_{d}
\end{array}\right),
\]
 for some function $v_{1},\cdots,v_{d}$. It turns out that the slightly
generalised Jacobian we consider is a natural quantity to consider
in this problem, to take into account the behaviour of solution across
interfaces. On the other hand, from a family of non-vanishing Jacobian
solutions, one can extract a subfamily $\left(u_{i_{1}}^{x_{0}},\cdots,u_{i_{d}}^{x_{0}}\right)$
such that $\det\left(Du_{i_{1}}^{x_{0}},\cdots,Du_{i_{d}}^{x_{0}}\right)\left(x_{0}\right)\neq0,$
so it encompasses non-vanishing Jacobian constraints for the traditional
definition of a Jacobian.
\end{rem}

Following the strategy introduced in \cite{2020-Alberti-Capdeboscq-IMRN}
we define the admissible set for an integer $P$
\begin{multline*}
\mathcal{A}(P):=\left\{ \left(u_{1},u_{2},\cdots,u_{P}\right)\in H(\Omega)^{P}:\forall x\in\Omega\setminus\cup_{i,j}\Gamma_{ij},\right.\\
\left.\left(u_{1},u_{2},\cdots,u_{P}\right)\text{ are non vanishing Jacobian solutions}\right\} .
\end{multline*}

For a geometrical reason that will be discussed later, we introduce
the notation 
\[
\dstar=\begin{cases}
d & \text{when }d=2,4,8\\
d+1 & \text{otherwise when }d\geq3.
\end{cases}
\]

\subsection{Main result}

The main result of this article is the following.
\begin{thm}
\label{thm:mainresult}Under assumption~\ref{assu:Piecewise_assumption_Domain}
and assumption~\ref{assu:regularitecoeff_}, when $P\geq\left[\frac{d+\dstar+1}{\alpha}\right]$,
$\mathcal{A}\left(P\right)$ is an open and dense subset of $H\left(\Omega\right)^{P},$
where $H(\Omega)$ is defined in \eqref{defhom}.
\end{thm}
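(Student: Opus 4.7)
The proof follows the Runge-plus-Whitney strategy of \cite{2020-Alberti-Capdeboscq-IMRN}, adapted to the piecewise-regular setting. For \emph{openness}, \lemref{regula-global} makes the map $(u_{1},\ldots,u_{P})\mapsto\JacJ(u_{1},\ldots,u_{P})$ continuous from $H(\Omega)^{P}$ into $C^{0}(\overline{\Omega_{i}};\R^{P\times(d+1)})$ for every $i\in\{1,\ldots,N+1\}$. Since each $\overline{\Omega_{i}}$ is compact and the condition $\rank(\cdot)=d+1$ is open in the matrix entries, any $(u_{1},\ldots,u_{P})\in\mathcal{A}(P)$ admits an $H(\Omega)^{P}$-neighbourhood on which the full-rank property persists throughout each $\overline{\Omega_{i}}$, hence on $\Omega\setminus\cup_{i,j}\Gamma_{ij}$.

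For \emph{density}, I would proceed in two steps. \textbf{Step~1 (local construction).} For each $x_{0}\in\Omega\setminus\cup_{i,j}\Gamma_{ij}$ I would build $d+1$ global solutions $(v_{1}^{x_{0}},\ldots,v_{d+1}^{x_{0}})$ of \eqref{PDE} with $\rank\JacJ(v_{1}^{x_{0}},\ldots,v_{d+1}^{x_{0}})(x_{0})=d+1$. Inside the subdomain $\Omega_{\ell}\ni x_{0}$ the coefficients are regular enough to produce, via an explicit Taylor-like construction, local solutions realising any prescribed first-order data at $x_{0}$; Runge approximation then converts these into global $H(\Omega)$-solutions of $Lu=0$ approximating the prescribed ones in $C^{1}$ near $x_{0}$. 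In the piecewise-regular case, the Hahn--Banach/unique-continuation argument underlying Runge must propagate annihilators across the interfaces $\Gamma_{ij}$: this is achieved by combining UCP inside each $\overline{\Omega_{i}}$ with the continuity of the traces of $u$ and of the conormal flux $(ADu+bu)\cdot\nu$ across $\Gamma_{ij}$, which is the content of \secref{construction}.

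\textbf{Step~2 (Whitney projection).} Given $(u_{1},\ldots,u_{P})\in H(\Omega)^{P}$ and $\epsilon>0$, enlarge this tuple by adjoining finitely many local solutions from Step~1 centred at points of a finite cover of each $\overline{\Omega_{\ell}}$, and consider perturbations $(\tilde u_{1},\ldots,\tilde u_{P})$ obtained as linear combinations of the enlarged family, parameterised by $\Lambda$ in a Grassmannian-type space $\mathcal{G}$. For each $\ell$ define the bad set
\[
B_{\ell}=\bigl\{(x,\Lambda)\in\overline{\Omega_{\ell}}\times\mathcal{G}:\rank\JacJ(\tilde u_{1},\ldots,\tilde u_{P})(x)\leq d\bigr\}.
\]
The rank-deficient stratum inside $\R^{P\times(d+1)}$ has codimension at least $P-\dstar$, the dichotomy $\dstar\in\{d,d+1\}$ encoding the parallelizability of $S^{d-1}$ in dimensions $d\in\{2,4,8\}$. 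Combined with the uniform $C^{0,\alpha}$ regularity of $\JacJ$ on each $\overline{\Omega_{\ell}}$ provided by \lemref{regula-global}, a H\"older--Sard covering argument bounds the Hausdorff dimension of the projection of $B_{\ell}$ onto $\mathcal{G}$ strictly below $\dim\mathcal{G}$ as soon as $P\geq\bigl[(d+\dstar+1)/\alpha\bigr]$. A generic choice of $\Lambda$ and a sufficiently small perturbation amplitude therefore produce a $P$-tuple in $\mathcal{A}(P)$ within $\epsilon$ of $(u_{1},\ldots,u_{P})$.

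The \emph{main obstacle}, relative to the smooth case of \cite{2020-Alberti-Capdeboscq-IMRN}, is carrying Step~2 \emph{up to the interfaces}. The matrix $\JacJ$ is genuinely discontinuous across each $\Gamma_{ij}$, so Sard must be applied on each $\overline{\Omega_{\ell}}$ separately; yet the rank condition has to hold uniformly up to $\Gamma_{ij}$ in order to land in an $H(\Omega)^{P}$-open set. Here the divergence form of $L$ is indispensable: the conormal flux $(ADu+bu)\cdot\nu$ is continuous across $\Gamma_{ij}$, so substituting it for one normal partial derivative in $\JacJ$ yields a quantity that extends continuously through the interface. With the interface charts introduced in \secref{Whitney}, this permits the Sard/Whitney projection to be executed on each compact closure $\overline{\Omega_{\ell}}$, thereby closing the argument.
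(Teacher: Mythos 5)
Your overall skeleton matches the paper's: non-emptiness of $\mathcal{A}(P_{0})$ for some large $P_{0}$ via the local construction and interface-crossing extension of \secref{construction} (\thmref{piecewise_regular-construction} and \eqref{RoughNumber}), a Whitney/Sard reduction down to $P^{\star}=\left[\frac{d+\dstar+1}{\alpha}\right]$ functions giving density, and openness by compactness and continuity of the relevant Jacobian up to closures; your per-subdomain openness argument is essentially the paper's, with $\JacSJ$ on $\overline{\Omega}$ replaced by $\JacJ$ on each $\overline{\Omega_{i}}$ (both rest on the same tacit point that the rank condition, assumed only on $\Omega\setminus\cup_{i,j}\Gamma_{ij}$, does not degenerate in the limit at the interfaces).

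The genuine gap is in your Step~2. First, the mechanism you give for the threshold is not the right one: the set of matrices of rank at most $d$ in $\R^{P\times(d+1)}$ has codimension $P-d$, independently of $\dstar$, and no stratum-codimension count produces the factor $1/\alpha$. In the paper, $\dstar$ enters through proposition~\proref{MNT-Light}: one needs $\dstar$ Lipschitz vector fields $f_{1},\cdots,f_{\dstar}$ of rank $d$ with $f_{1}$ normal on each $\Gamma_{ij}$, and non-parallelizability of $S^{d-1}$ forces $\dstar=d+1$ outside $d\in\{2,4,8\}$; the Sard-type estimate of \lemref{Whitney-Reduction-Lemma-Light} is then applied to the map $(x,\zeta)\in\left(\Omega\setminus\cup_{i,j}\Gamma_{ij}\right)\times\R^{\dstar+1}\mapsto F_{x}\zeta\in\R^{P}$, which is $C^{0,\alpha}$ in $x$ and linear in $\zeta$, and it is the dimension $d+\dstar+1$ of this domain, inflated by $1/\alpha$, that yields the bound $P\geq\left[\frac{d+\dstar+1}{\alpha}\right]$. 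Second, and more importantly, a Sard argument run on the flux-modified quantity only controls the rank of $\JacSJ$, whereas membership in $\mathcal{A}(P)$ is a statement about $\JacJ$; the paper closes this loop algebraically via proposition~\propref{current-matrix} and \lemref{Main-reduction-lemma}: the $\JacSJ$-rows equal the $\JacJ$-rows multiplied by the matrix $T\left(x,E_{d+1,d}f_{1},\cdots,E_{d+1,d}f_{\dstar},e_{d+1}\right)$, which has rank $d+1$ by ellipticity of $A$, so full rank of the $\JacSJ$ stack forces full rank of $\JacJ$. Your proposal never supplies this transfer, and it is internally inconsistent on this point: your bad sets $B_{\ell}$ are defined with $\JacJ$ on $\overline{\Omega_{\ell}}$, while your closing paragraph asserts that the Sard step must be run on the flux-substituted quantity. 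As written, neither variant delivers the claimed conclusion: the flux-based variant lacks the rank-transfer step back to $\JacJ$, and a self-contained per-subdomain count with $\JacJ$ would produce a different (in $d\in\{2,4,8\}$, strictly larger) threshold than the one stated in the theorem.
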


\begin{rem*}
This theorem is an extension to the piecewise regular context \cite[Theorem 2.3]{2020-Alberti-Capdeboscq-IMRN}.
In terms of the result itself, the number $P$ obtained in \cite{2020-Alberti-Capdeboscq-IMRN}
is $\left[\frac{2d}{\alpha}\right]$, thus our result requires a slightly
larger number than the regular case; however the number of subdomain
where the coefficients are regular does not play a role.

A careful reader comparing \cite[Theorem 2.3]{2020-Alberti-Capdeboscq-IMRN}
and \thmref{mainresult} might notice that our result applies to the
whole domain, instead of a compact subset. Another simplification
is that we need not assume that the Dirichlet (or Neuman or Robin)
boundary value problem associated to (\ref{eq:PDE}) is well posed
for our result to hold.
\end{rem*}
The proof is done in several steps.
\begin{thm}
\label{thm:piecewise_regular-construction} For any $\sigma>0$ there
exists $\epsilon>0$ such that for any $x\in\Omega\setminus\cup_{i,j}\Gamma_{ij}$,
there exists $d+1$ solutions denoted as $u_{1}^{x},u_{2}^{x},\cdots,u_{d+1}^{x}$
such that $u_{i}^{x}\in H^{1}\left(\Omega\right)$ and $Lu_{i}^{x}=0$
in $\overline{\Omega}$ for $i\in\left\{ 1,2,\cdots,d+1\right\} $,
and there holds 
\begin{equation}
\det\JacJ\left(u_{1}^{x},u_{2}^{x},\cdots,u_{d+1}^{x}\right)\left(y\right)=\det\begin{bmatrix}\begin{array}{rrrr}
\partial_{1}u_{1}^{x} & \cdots & \partial_{d}u_{1}^{x} & u_{1}^{x}\\
\vdots & \vdots & \vdots & \vdots\\
\partial_{1}u_{d+1}^{x} & \cdots & \partial_{d}u_{d+1}^{x} & u_{d+1}^{x}
\end{array}\end{bmatrix}\left(y\right)>\sigma\label{eq:requirement}
\end{equation}

for any $y\in B\left(x,\epsilon\right)\cap\Omega_{j}$, $j\in\left\{ 1,\ldots,N+1\right\} $.
\end{thm}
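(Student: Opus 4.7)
My plan is to construct the $d+1$ solutions in three stages: build an explicit affine-type template at $x$ whose generalised Jacobian is large on a small neighbourhood; promote that template to a global solution of $Lu=0$ via a Runge-type approximation adapted to piecewise regular coefficients; and finally propagate the non-vanishing Jacobian across each interface met by $B(x,\epsilon)$ using the divergence-form transmission conditions. Uniformity of $\epsilon$ will follow from the fact that every constant in the argument can be expressed in terms of $\lambda$, $\alpha$, and the $C^2$ geometry of the $\Omega_i$ alone.

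Let $j_0$ be the index with $x\in\Omega_{j_0}$ and fix a scale $M>0$ to be adjusted last. First I would consider the template $W_i(y)=M(y-x)_i$ for $i=1,\ldots,d$ and $W_{d+1}\equiv M$, for which $\det\JacJ(W_1,\ldots,W_{d+1})(y)$ is identically $M^{d+1}$. After extending the coefficients $A_{j_0},b_{j_0},c_{j_0},q_{j_0}$ to $C^{0,\alpha}$ (or $C^{0,1}$ if $d\geq 3$) functions on $\R^d$ preserving the ellipticity bound $\lambda$, I would correct $W_i$ on a ball $B(x,r_0)$ by solving the Dirichlet problem for the extended operator $\tilde L_{j_0}$ to produce genuine local solutions $W_i^x$. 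Schauder-type estimates applied to the difference yield $\|W_i^x-W_i\|_{C^{1,\alpha}(B(x,r_0/2))}\leq C_0 M r_0^\alpha$, so taking $r_0$ small (independently of $x$) forces the determinant of $\JacJ(W_1^x,\ldots,W_{d+1}^x)$ to remain above $\tfrac34 M^{d+1}$ on $B(x,r_0/2)$.

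Next I would invoke Runge approximation to obtain global solutions $u_i^x\in H^1(\Omega)$ of $Lu_i^x=0$ whose restrictions to $B(x,r_0/2)\cap\Omega_{j_0}$ are $L^2$-close to $W_i^x$ within a prescribed tolerance $\eta$. Since $u_i^x-W_i^x$ is a local $L_{j_0}$-solution on this piece, interior elliptic regularity combined with the $C^{1,\alpha}$ bound up to the interface from \lemref{regula-global} upgrades $L^2$-closeness to $C^{1,\alpha}$-closeness on $B(x,r_0/4)\cap\overline{\Omega_{j_0}}$, preserving the determinant above $\tfrac12 M^{d+1}$ once $\eta$ is small. For each neighbouring subdomain $\Omega_k$ met by $B(x,r_0/4)$, I would then exploit the transmission conditions $u_i^x|_{j_0}=u_i^x|_k$ and $(A_{j_0}Du_i^x)\cdot\nu=(A_k Du_i^x)\cdot\nu$ on $\Gamma_{j_0 k}$, together with continuity of the tangential gradient and ellipticity of $A_k$, to realise each row of $\JacJ$ on the $\Omega_k$-side of the interface as the image of the corresponding row on the $\Omega_{j_0}$-side under an invertible linear map $T_z$ with $\det T_z\geq c_{\mathrm{tr}}(\lambda)>0$. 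Hence the trace of $\det\JacJ$ on the $\Omega_k$-side is at least $\tfrac12 c_{\mathrm{tr}}M^{d+1}$, and \lemref{regula-global} again extends this lower bound into a one-sided $\delta$-collar of the interface, with $\delta$ depending only on $\lambda$ and the $\Omega_i$. Choosing $M$ so large that $\min(\tfrac12,\tfrac12 c_{\mathrm{tr}})M^{d+1}>\sigma$ and setting $\epsilon=\min(r_0/4,\delta)$ delivers the stated bound, with $\epsilon$ independent of $x$.

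The main obstacle is the Runge step. The classical Lax--Malgrange argument rests on a unique continuation principle for $L$ within a single H\"older-regular region, whereas here the coefficients $A,b,c,q$ jump across every $\Gamma_{ij}$. The form of Runge approximation required therefore relies on a unique continuation principle that \emph{crosses} the smooth interfaces; its proof is precisely what \secref{construction} is dedicated to, and it uses the divergence form of the principal part (continuity of the conormal flux) together with the $C^2$ regularity of each $\Gamma_{ij}$. Once that refinement is available, the rest of the argument is a perturbation computation around the explicit template.
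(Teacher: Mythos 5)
Your step (1) and your interface-transfer observation in step (3) are both in the spirit of what the paper does (the paper's \lemref{localcase} uses exact solutions of the frozen-coefficient operator built from ODEs rather than affine templates, which avoids the $O(M)$ source terms that $c\cdot DW_i+qW_i$ create for your affine/constant templates; and the transfer-matrix idea is essentially why $u$ is appended to $\JacJ$ and underlies proposition~\proref{MNT-Light}). The genuine gap is your step (2). You invoke a Runge approximation for the \emph{full} operator $L$, i.e.\ you need local solutions near $x$ to be approximable by global solutions of the discontinuous-coefficient equation, and by the classical Lax--Malgrange duality this requires a unique continuation principle for $L^{*}$ that crosses every interface $\Gamma_{ij}$. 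You then assert that such a cross-interface UCP ``is precisely what \secref{construction} is dedicated to''. It is not: no unique continuation across a coefficient jump is proved (or used) anywhere in the paper, and no such result is available in general for anisotropic, merely H\"older (or Lipschitz) coefficients with jumps. So the hardest part of the theorem has been assumed rather than proved, and the proposed route would stall exactly there.

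What the paper does instead is structurally different and is designed to avoid any cross-interface UCP. One first shifts the operator to $L+\translation$ (\lemref{translation_to_get_well-posedness}) so that \emph{all} Dirichlet and transmission problems obtained by mixing the coefficient sets $A_{i},b_{i},c_{i},q_{i}$ among the subdomains are well posed -- a point your proposal never addresses, since the boundary value problem for $L$ itself is not assumed well posed. Then a \emph{construction map} (definition~\ref{def:construction-map}) orders the subdomains, and one works with auxiliary operators $L_{k}=L\left[\mathbf{j_{i}}^{k}\right]$ whose coefficients are regular across all not-yet-treated interfaces: classical UCP/Runge is only ever applied inside regions where the current operator has H\"older (or Lipschitz) coefficients (\lemref{extended_Runge}). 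The crossing of a single interface $\Gamma_{k}$ is done quantitatively by \lemref{SmallExtension}: a dilation $u^{t}=u\circ\psi_{k}^{-1}\left(t\psi_{k}\left(\cdot\right)\right)$ using the sphere-like chart, a coercive Dirichlet problem on the thin shell $U^{t}$, and a well-posed transmission (jump) problem to glue, with errors controlled in $H^{1}$ and upgraded to $C^{1}$ via \lemref{regula-global} so the determinant bound survives each of the $N$ swaps; finally one reverts from $L+\translation$ to $L$ by a perturbation/Fredholm argument. If you want to keep your outline, you must either prove the cross-interface UCP you are relying on (not currently known at this generality) or replace step (2) by an iterative extension mechanism of this kind.
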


This result is proved in \secref{construction}. It does not follow
directly from classical unique continuation arguments, because of
the discontinuous nature of the coefficients of \eqref{PDE}.

Choose $\sigma=1$, and let $\epsilon$ be the corresponding ball
radius. We may extract a finite cover of $\overline{\Omega}$ from
$\cup_{x\in\Omega\setminus\cup_{i,j}\Gamma_{ij}}B\left(x,\epsilon\right)$,
of cardinality smaller than, say, $\left(\text{\ensuremath{\epsilon^{-1}}diam}\left(\Omega\right)\right)^{d}+1.$
As a result,
\begin{equation}
\mathcal{A}\left(\left[\left(\frac{\text{diam}\left(\Omega\right)}{\epsilon}\right)^{d}\right]+1\right)\neq\emptyset.\label{eq:RoughNumber}
\end{equation}
To reduce the cardinality of the required group of non-vanishing Jacobian
solutions, and to prove the density property we announced, we use
a Whitney reduction lemma.

This strategy was used in \cite{2020-Alberti-Capdeboscq-IMRN}, based
on a method introduced in \cite{Greene-Wu-1975a}, and used the Hölder
continuity of the Jacobian map $\JacJ$. In our setting, $\JacJ$
may be discontinuous across interfaces $\Gamma_{ij}$.

On the other hand, because of the divergence form of the principal
part of the elliptic operator $L$, a mixed-type (for lack of a better
word) Jacobian map of the form 
\[
\left(A\nabla u\cdot h_{1}+b\cdot h_{1}u,\nabla u\cdot h_{2},\cdots,\nabla u\cdot h_{d},u\right),
\]
with appropriately chosen $\left(h_{1},\cdots,h_{d}\right)\in C^{0,1}\left(\Omega;\mathbb{R}^{d\times d}\right)$
is continuous.
\begin{prop}
\label{pro:MNT-Light} There exists a family of vector-valued functions
$\mathcal{F}=f_{1},\cdots,f_{d^{\star}}\in C^{0,1}\left(\Omega;\mathbb{R}^{d}\right)^{d^{\star}}$,

such that
\begin{enumerate}
\item For every $x\in\Omega$, there holds $\rank\left(f_{1},\cdots,f_{\dstar}\right)\left(x\right)=d.$
\item On each $\Gamma_{ij},$$\left|f_{1}\right|=1$, $f_{1}$ is normal
to $\Gamma_{ij}$, and $f_{1}\cdot f_{j}=0$ for any $j\geq2.$
\item For any $u\in H\left(\Omega\right)$ weak solution of \eqref{PDE},
the map 
\begin{equation}
\JacSJ\left(u,\mathcal{F}\right):=\left(\left(ADu+bu\right)\cdot f_{1},Du\cdot f_{2},\cdots,Du\cdot f_{\dstar},u\right)\label{eq:defJacFlow}
\end{equation}
satisfies $\JacSJ\left(u,\mathcal{F}\right)\in C^{0,\alpha}\left(\Omega;\mathbf{\text{\ensuremath{\R}}}^{\dstar+1}\right).$
\end{enumerate}
\end{prop}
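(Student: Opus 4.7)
The plan is to construct the family $\mathcal{F}$ locally near each interface by pulling back a canonical frame from the unit sphere through the sphere-like diffeomorphism $\psi_{ij}$, and to verify property (3) by combining the transmission conditions satisfied by divergence-form solutions with the one-sided $C^{1,\alpha}$ regularity given by \lemref{regula-global}.

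First I would use \assuref{Piecewise_assumption_Domain} and the separation constant $d_0$ to shrink each $U_{ij}$ if necessary so that the closures $\overline{U_{ij}}$ are pairwise disjoint and contained in $\Omega$. On each $U_{ij}$, transport the outward unit normal of $S^{d-1}$ through $\psi_{ij}$ to produce a $C^{1}$ vector field $\nu_{ij}$ whose restriction to $\Gamma_{ij}$ is the unit normal. Introduce a Lipschitz cutoff $\chi_{ij}$ equal to $1$ on a neighbourhood of $\Gamma_{ij}$ and vanishing outside $U_{ij}$, and define
\[
f_{1}=\sum_{ij}\chi_{ij}\nu_{ij},\qquad f_{k+1}=e_{k}-(e_{k}\cdot f_{1})\,f_{1}\quad(k=1,\ldots,d),
\]
where $(e_{1},\ldots,e_{d})$ is the standard basis of $\R^{d}$. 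Then $f_{1}\in C^{0,1}(\Omega;\R^{d})$, and each $f_{k+1}$ is polynomial in $f_{1}$, hence $C^{0,1}$. On $\Gamma_{ij}$ one has $|f_{1}|=1$ with $f_{1}$ normal to $\Gamma_{ij}$, and $f_{k+1}\cdot f_{1}=(e_{k}\cdot f_{1})(1-|f_{1}|^{2})=0$, proving (2). For (1), a direct computation shows that $I-f_{1}f_{1}^{T}$ has eigenvalue $1-|f_{1}|^{2}$ in the direction $f_{1}$ and eigenvalue $1$ on $\{f_{1}\}^{\perp}$; hence either $|f_{1}|<1$ at $x$ and $(f_{2},\ldots,f_{d+1})$ already spans $\R^{d}$, or $|f_{1}|=1$ and adjoining $f_{1}$ to the $(d-1)$-dimensional image of the projector yields rank $d$. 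This gives the count $\dstar=d+1$ in general. When $d\in\{2,4,8\}$, $S^{d-1}$ is parallelisable by the Bott--Milnor--Kervaire theorem, so one can instead pull back $d-1$ smooth pointwise independent tangent fields on $U_{ij}$ and reduce the count to $\dstar=d$; this is the source of the dichotomy in the definition of $\dstar$.

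Next I would verify property (3). Inside each subregion $\Omega_{i}$, \lemref{regula-global} gives $u\in C^{1,\alpha}(\Omega_{i})$, and since $A,b,c,q$ and the $f_{k}$'s are $C^{0,\alpha}$, every entry of $\JacSJ(u,\mathcal{F})$ belongs to $C^{0,\alpha}(\Omega_{i})$. The substantive point is the matching across an interface $\Gamma_{ij}$. Testing the weak formulation of \eqref{PDE} against functions whose support crosses $\Gamma_{ij}$ yields the transmission identity $\bigl[(ADu+bu)\cdot\nu_{ij}\bigr]=0$; since $f_{1}|_{\Gamma_{ij}}=\nu_{ij}$, the first component $(ADu+bu)\cdot f_{1}$ has matching traces from both sides. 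For $k\geq2$, $f_{k}|_{\Gamma_{ij}}$ is tangent to $\Gamma_{ij}$, so $Du\cdot f_{k}$ coincides with the tangential derivative $\nabla_{\Gamma_{ij}}u\cdot f_{k}$ of the trace of $u$, which is single-valued because $[u]=0$ on $\Gamma_{ij}$ by $H^{1}$ continuity. The last component $u$ is continuous by the same reason. A standard gluing lemma for H\"older functions across a $C^{2}$ surface (the seminorm between $x\in\Omega_{i}$ and $y\in\Omega_{j}$ near $\Gamma_{ij}$ is controlled by choosing a short path through $\Gamma_{ij}$ and using the one-sided $C^{0,\alpha}$ seminorms) then gives $\JacSJ(u,\mathcal{F})\in C^{0,\alpha}(\Omega;\R^{\dstar+1})$.

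The main obstacle is the sphere-theoretic step underlying the dichotomy in $\dstar$. The explicit projector construction above always works but costs one extra field. Improving to $\dstar=d$ in dimensions $d\in\{2,4,8\}$ requires a parallelisation of $S^{d-1}$, which by Bott--Milnor--Kervaire exists precisely when $d-1\in\{1,3,7\}$; in all other dimensions the hairy ball phenomenon obstructs a tangent frame of size $d-1$ and forces $\dstar=d+1$. The analytic verification of (3) is essentially forced by the divergence structure of $L$ and the $H^{1}$-continuity of $u$; the cleverness is entirely in selecting the frame $\mathcal{F}$ so that both $f_{1}$ pairs with the conserved normal flux and the remaining $f_{k}$'s pair with the tangentially continuous part of $Du$.
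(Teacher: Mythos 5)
Your construction in the generic dimensions is correct and takes a genuinely different, more elementary route than the paper. For $\dstar=d+1$ the paper builds $\mathcal{F}$ by mapping a collar around each interface onto an annulus via a diffeomorphism of the geometric complement, prescribing the pulled-back frame of \lemref{UnitBall} on $\Gamma_{ij}$ and the identity frame on the two boundary spheres of the collar, joining them by a Lipschitz $SO_{d+1}$-valued path $S\left(x,r\right)$ and then projecting away the last coordinate; your projector trick $f_{k+1}=\left(I-f_{1}f_{1}^{T}\right)e_{k}$, with $f_{1}$ a cut-off unit-normal extension supported near the (pairwise separated) interfaces, achieves properties (1) and (2) globally in one stroke, since $I-f_{1}f_{1}^{T}$ is invertible wherever $\left|f_{1}\right|\neq1$ and has image $\left\{ f_{1}\right\} ^{\perp}$ wherever $\left|f_{1}\right|=1$. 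Your verification of (3) --- continuity of the conormal flux paired with $f_{1}$, tangency of the remaining fields paired with the continuity of the trace of $u$, the one-sided $C^{1,\alpha}$ bounds of \lemref{regula-global}, and a H\"older gluing across the $C^{2}$ interfaces --- is essentially the paper's argument. (Minor point: extend the normal via the signed distance function or a normalised pull-back; transporting the normal of $S^{d-1}$ through $D\psi_{ij}$ does not by itself give a unit normal on $\Gamma_{ij}$.)

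The genuine gap is in the exceptional dimensions $d\in\left\{ 2,4,8\right\} $, where the proposition asserts $\dstar=d$ (and $d=2$ is the case of most practical interest). Your construction produces $d+1$ fields there, and the sentence ``pull back $d-1$ tangent fields on $U_{ij}$ and reduce the count'' skips exactly the hard step: the pulled-back tangent frame exists only near $\Gamma_{ij}$, and it must be extended to all of $\Omega$ without ever losing pointwise linear independence. Multiplying by a cutoff destroys the rank away from the interface, and switching to the constant frame $e_{1},\cdots,e_{d}$ far from the interface requires a transition through nondegenerate frames, i.e.\ a Lipschitz homotopy with values in $SO_{d}$ (or at least $GL_{d}$) on a collar around each $\Gamma_{ij}$, matching the adapted frame on $\Gamma_{ij}$ and the identity frame on both boundary spheres of the collar. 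This interpolation is precisely where the paper invests its effort (the path $S\left(x,r\right)$ on $H_{i}\left(B_{r_{i}^{*}}\right)\setminus H_{i}\left(B_{r_{i}}\right)$), and it is not supplied by parallelisability of $S^{d-1}$ alone, which only provides the frame on the interface itself. As written, your argument proves the proposition with $\dstar$ replaced by $d+1$ in those dimensions, which would also weaken the threshold $P\geq\left[\frac{d+\dstar+1}{\alpha}\right]$ in \thmref{mainresult} for $d=2,4,8$; to obtain the stated result you must either carry out the frame-homotopy construction on the collars or give another mechanism for globalising the tangent frame.
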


This proposition is proved in \secref{Whitney}.
\begin{rem*}
The vector $f_{1}$ can be thought of as the extension of the normal
vector and $f_{2},\cdots,f_{\dstar}$ as the tangent vectors on each
boundary $\Gamma_{ij}$. Indeed, since $A$ and $b$ are only piecewise
regular, only the normal flux is continuous (and, in turn, Hölder
continuous) across interfaces between any $\Omega_{i}$ and $\Omega_{j}$.
This forces $f_{2},\cdots,f_{\dstar}$ to be tangent to the interface.
A topological difficulty arises in all dimensions, except $2,4$ and
$8$, which requires the introduction of an extra element to obtain
a full rank family of Lipschitz continuous tangent vectors. This classical
result \cite{bott1958parallelizability,kervaire1958non} is discussed
further in \secref{Kervaire}.
\end{rem*}
To untangle the dependence of $\JacSJ$ on $u$ and $\mathcal{F}$,
we reformulate $\JacSJ$ as follows.
\begin{prop}
\label{prop:current-matrix}We note $P_{d,d+1}\in\mathbb{R}^{d\times\left(d+1\right)}$
the projection from $\mathbb{R}^{d+1}$ to $\mathbb{R}^{d}$ given
by $P_{d,d+1}$ is such that $\left(P_{d,d+1}\right)_{ij}=\delta_{ij}$.
We note $E_{d+1,d}$ the extension from $\mathbb{R}^{d}$ to $\mathbb{R}^{d+1}$
given by $E_{d+1,d}$ is such that $\left(E_{d+1,d}\right)_{ij}=\delta_{ij}$.

Set
\begin{eqnarray*}
T:\left(\Omega\setminus\cup_{ij}\Gamma_{ij}\right)\times\mathbb{R}^{d+1} & \to & \mathcal{L}\left(\mathbb{R}^{\left(d+1\right)\times\left(d^{\star}+1\right)}\right)\\
\left(x,\zeta_{1},\cdots,\zeta_{\dstar+1}\right) & \to & \left(\begin{array}{ccccc}
A^{T}(x)P_{d,d+1}\zeta_{1} & P_{d,d+1}\zeta_{2} & \cdots & P_{d,d+1}\zeta_{\dstar} & P_{d,d+1}\zeta_{\dstar+1}\\
b(x)P_{d,d+1}\zeta_{1} & 0 & \cdots & 0 & 1
\end{array}\right)
\end{eqnarray*}

For any $x\in\Omega\setminus\cup_{ij}\Gamma_{ij}$, and for any $\left(\xi_{1},\cdots,\xi_{\dstar}\right)\in\left(\mathbb{R}^{d}\right)^{\dstar}$
there holds 
\begin{equation}
\text{rank}\left(T\left(x,E_{d+1,d}\xi_{1},\cdots,E_{d+1,d}\xi_{\dstar},\text{e}_{d+1}\right)\right)=\text{rank}\left(\xi_{1},\cdots,\xi_{\dstar}\right)+1.\label{eq:rqnkT}
\end{equation}

Furthermore, we have 
\[
\JacSJ\left(u,\mathcal{F}\right)=\left(\partial_{1}u,\cdots,\partial_{d}u,u\right)T\left(x,E_{d+1,d}f_{1},\cdots,E_{d+1,d}f_{\dstar},\text{e}_{d+1}\right),
\]
 where $\JacSJ$ is given by \eqref{defJacFlow}.
\end{prop}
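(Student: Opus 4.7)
The proposition is entirely algebraic: once the specific substitution $\zeta_i = E_{d+1,d}\xi_i$ (for $1\le i\le \dstar$) and $\zeta_{\dstar+1}=\mathrm{e}_{d+1}$ is made, $T$ collapses to a matrix with an evident block structure from which both assertions follow by direct inspection. The plan is therefore to first bring the substituted $T$ into simple form, then verify the factorisation column by column, and extract the rank by one elementary column operation.

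Using the identities $P_{d,d+1}E_{d+1,d}=I_{d}$ and $P_{d,d+1}\mathrm{e}_{d+1}=0$, the substituted matrix reads
\[
T(x,E_{d+1,d}\xi_{1},\ldots,E_{d+1,d}\xi_{\dstar},\mathrm{e}_{d+1})
=\begin{pmatrix}
A^{T}(x)\xi_{1} & \xi_{2} & \cdots & \xi_{\dstar} & 0\\
b(x)\cdot\xi_{1} & 0 & \cdots & 0 & 1
\end{pmatrix}.
\]
For the factorisation claim, set $\xi_{i}=f_{i}(x)$ and compute the row-vector product $(\partial_{1}u,\ldots,\partial_{d}u,u)\,T$ column by column: the first column yields $Du\cdot A^{T}f_{1}+u(b\cdot f_{1})=(ADu+bu)\cdot f_{1}$ (using $ADu\cdot f_{1}=Du\cdot A^{T}f_{1}$), columns $2,\ldots,\dstar$ yield $Du\cdot f_{k}$, and the last column yields $u$. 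Comparing with \eqref{defJacFlow} gives exactly $\JacSJ(u,\mathcal{F})$.

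For the rank identity \eqref{rqnkT}, perform the rank-preserving column operation $\mathrm{col}_{1}\leftarrow \mathrm{col}_{1}-(b\cdot\xi_{1})\,\mathrm{col}_{\dstar+1}$, which annihilates the last entry of the first column. The resulting matrix has last row $(0,\ldots,0,1)$ and last column $(0,\ldots,0,1)^{T}$, so its rank splits as $1$ (from this isolated pivot) plus the rank of the $d\times\dstar$ upper-left block $(A^{T}\xi_{1}\mid\xi_{2}\mid\cdots\mid\xi_{\dstar})$. The ellipticity assumption~\ref{assu:regularitecoeff_} makes $A^{T}(x)$ a linear isomorphism of $\R^{d}$, and this is used to identify the rank of the modified block with that of the plain collection $(\xi_{1}\mid\xi_{2}\mid\cdots\mid\xi_{\dstar})$, which yields \eqref{rqnkT}.

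The only step requiring genuine care is this last identification, namely verifying that the substitution of $\xi_{1}$ by $A^{T}\xi_{1}$ in the presence of the other columns $\xi_{2},\ldots,\xi_{\dstar}$ preserves the dimension of the span; this is the single place in the proof where the elliptic structure of $A$ really enters, and it is the main obstacle to compress into one line. Everything else is bookkeeping on the substituted block form of $T$, with the factorisation following by construction and the rank formula following from one elementary column operation and the invertibility of $A^{T}$.
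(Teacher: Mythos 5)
Your handling of the two routine parts coincides with the paper's: the substituted block form of $T$, the column-by-column verification of the factorisation $\JacSJ\left(u,\mathcal{F}\right)=\left(\partial_{1}u,\cdots,\partial_{d}u,u\right)T$, and the column operation isolating the pivot in the last row and column are all correct, and they reduce (\ref{eq:rqnkT}) to the identity $\rank\left(A^{T}\xi_{1},\xi_{2},\cdots,\xi_{\dstar}\right)=\rank\left(\xi_{1},\xi_{2},\cdots,\xi_{\dstar}\right)$. The genuine gap is precisely the step you defer to the end and attribute to the invertibility of $A^{T}$: invertibility is not the right ingredient and cannot close the argument, because applying an invertible map to \emph{one} column of a family while leaving the other columns unchanged does not preserve the rank of the family. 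Concretely, take $d=\dstar=2$, $A=A^{T}=\begin{pmatrix}1 & 1\\ 1 & 2\end{pmatrix}$ (symmetric and uniformly elliptic), $\xi_{1}=e_{1}$ and $\xi_{2}=\left(1,1\right)^{T}=A^{T}\xi_{1}$: then $\rank\left(\xi_{1},\xi_{2}\right)=2$ while $\rank\left(A^{T}\xi_{1},\xi_{2}\right)=1$. Hence no argument resting only on $A^{T}\in GL_{d}\left(\R\right)$ can yield the claimed equality, and the one step you yourself identify as ``requiring genuine care'' is exactly the step left unproved.

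Note that the paper's own (terse) proof invokes not invertibility but the quadratic positivity $A^{T}\xi_{1}\cdot\xi_{1}\geq\lambda\left|\xi_{1}\right|^{2}>0$ for $\xi_{1}\neq0$, and this is the ingredient that actually works in the situation where the proposition is applied, namely $\xi_{j}=f_{j}\left(x\right)$ with $f_{1}$ orthogonal to $f_{2},\cdots,f_{\dstar}$ (proposition~\ref{pro:MNT-Light}): if $\xi_{1}\perp\span\left(\xi_{2},\cdots,\xi_{\dstar}\right)$, then $A^{T}\xi_{1}\cdot\xi_{1}>0$ forces $A^{T}\xi_{1}\notin\span\left(\xi_{2},\cdots,\xi_{\dstar}\right)$, so the modified first column still contributes an extra dimension and the rank is preserved; if instead $\xi_{2},\cdots,\xi_{\dstar}$ already span $\R^{d}$, nothing is required of $A$. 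The counterexample above shows that some such geometric relation between $\xi_{1}$ and the remaining vectors must enter (read for completely arbitrary $\left(\xi_{1},\cdots,\xi_{\dstar}\right)$, the displayed rank identity does fail), so supplying this positivity-plus-orthogonality argument---rather than citing the bijectivity of $A^{T}$---is the missing content of your proposal.
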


\begin{proof}
The last column of $T\left(x,E_{d+1,d}\xi_{1},\cdots,E_{d+1,d}\xi_{\dstar},\text{e}_{d+1}\right)$
is $e_{d+1}\neq0$. Together with the fact that $P_{d,d+1}E_{d+1,d}=I_{d}$,
the identity matrix in $\mathbb{R}^{d}$, the first $\dstar$ columns
are
\[
\begin{pmatrix}A^{T}(x)\xi_{1} & \xi_{2} & \cdots & \xi_{\dstar}\\
b(x)\xi_{1} & 0 & \cdots & 0
\end{pmatrix}
\]
Thanks to the uniform ellipticity of $A$, $A^{T}\xi_{1}\cdot\xi_{1}>\lambda\left|\xi_{1}\right|^{2}$
and \eqref{rqnkT} follows. The identity involving $\JacSJ$ is straightforward.
\end{proof}
The Whitney reduction argument is as follows.
\begin{lem}
\label{lem:Whitney-Reduction-Lemma-Light} Given $P\in\mathbb{N}$
large enough so that $\mathcal{A}\left(P\right)\neq\emptyset$, define
\begin{eqnarray}
F:\Omega\setminus\cup_{i,j}\Gamma_{ij}\times\mathbf{\mathbb{R}}^{\dstar+1} & \rightarrow & \mathbb{R}^{P}\nonumber \\
\left(x,\zeta\right) & \to & F_{x}\zeta\label{eq:defFWhitney}
\end{eqnarray}
where 
\[
F_{x}\zeta:=\begin{bmatrix}\left(\partial_{1}u_{1},\cdots,\partial_{d}u_{1},u_{1}\right)\\
\vdots\\
\left(\partial_{1}u_{P},\cdots,\partial_{d}u_{P},u_{P}\right)
\end{bmatrix}T\left(x,E_{d+1,d}f_{1},\cdots,E_{d+1,d}f_{\dstar},\text{e}_{d+1}\right)\zeta,
\]
with $\left\{ u_{1},\cdots,u_{P}\right\} \in\mathcal{A}\left(P\right).$
Then $F_{x}$ has rank $d+1$. For $P>\frac{d+\dstar+1}{\alpha}$,
and $a\in\mathbf{\R}^{P-1}$, let $P_{a}$ be the map from $\mathbb{R}^{P}$
to $\mathbb{R}^{P-1}$ defined by 
\[
P_{a}(y)=(y_{1}-a_{1}y_{P},\cdots,y_{P-1}-a_{P-1}y_{P})
\]
for $y=(y_{1},y_{2},\cdots,y_{P})\in\mathbb{R}^{P}$ . Let $G=\left\{ a\in\R^{P-1}|P_{a}\circ F_{x}\text{ has rank \ensuremath{d+1}}\right\} ,$
then $\vert\R^{P-1}-G\vert_{\text{Lebesgue}}=0.$
\end{lem}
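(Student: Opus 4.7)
The plan is to verify the rank of $F_x$ directly, then rephrase the rank condition on $P_a \circ F_x$ as a geometric exclusion and control it via a H\"older--Sard estimate applied to the globally H\"older-regular parametrization provided by \proref{MNT-Light}.

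For the rank of $F_x$, factor
\[
F_x \;=\; \JacJ(u_1,\dots,u_P)(x)\,T\!\big(x, E_{d+1,d}f_1(x),\dots, E_{d+1,d}f_{\dstar}(x), \mathrm{e}_{d+1}\big).
\]
The $P \times (d+1)$ left factor has rank $d+1$ by the definition of $\mathcal{A}(P)$. By \propref{current-matrix} applied with $\xi_j = f_j(x)$, and since $f_1(x),\dots,f_{\dstar}(x)$ span $\R^d$ at every $x$ by item~(1) of \proref{MNT-Light}, the $(d+1) \times (\dstar+1)$ right factor has rank $d+1$. Full column rank on the left combined with full row rank on the right forces $\rank(F_x) = d+1$.

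To unpack the rank condition on $P_a \circ F_x$: the kernel of $P_a$ is the line $\R\,(a_1,\dots,a_{P-1},1) \subset \R^P$, so $P_a \circ F_x$ fails to have rank $d+1$ exactly when $(a,1) \in \Im(F_x)$. The exceptional set thus reads
\[
\R^{P-1} \setminus G \;=\; \bigl\{\, a \in \R^{P-1} : \exists\, x \in \Omega\setminus\cup_{i,j}\Gamma_{ij},\ \exists\, \zeta \in \R^{\dstar+1},\ F_x\zeta = (a,1)\,\bigr\}.
\]
Introducing $\Phi : \Omega \times \R^{\dstar+1} \to \R^P$, $\Phi(x,\zeta) = F_x\zeta$, and $\mathcal{S} := \Phi(\Omega \times \R^{\dstar+1})$, linearity in $\zeta$ makes $\mathcal{S}$ a cone in $\R^P$ invariant under dilations, and $\R^{P-1}\setminus G$ is the projection of $\mathcal{S} \cap \{v_P=1\}$ onto the first $P-1$ coordinates.

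The critical step is to show $|\mathcal{S}|_P = 0$. \lemref{regula-global} only gives piecewise $C^{0,\alpha}$ regularity of $(Du_i,u_i)$, which would not suffice; but \proref{MNT-Light} guarantees that each row of $F_x$ is $C^{0,\alpha}$ on all of $\Omega$, across every interface $\Gamma_{ij}$. Combined with the linearity of $\Phi$ in $\zeta$, this makes $\Phi$ locally $C^{0,\alpha}$ on its $(d+\dstar+1)$-dimensional source. A H\"older map with exponent $\alpha$ sends a set of Hausdorff dimension $s$ to a set of Hausdorff dimension at most $s/\alpha$, so $\mathcal{S}$ has Hausdorff dimension at most $(d+\dstar+1)/\alpha$; the hypothesis $P > (d+\dstar+1)/\alpha$ then forces $|\mathcal{S}|_P = 0$. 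Fubini along the $v_P$-axis then gives that almost every slice $\mathcal{S}\cap\{v_P = t\}$ has vanishing $(P-1)$-dimensional measure in the hyperplane, and the cone structure makes the positive (resp.\ negative) slices pairwise dilated copies of one another, so in fact every slice with $t\neq 0$ has vanishing measure. Projection onto the first $P-1$ coordinates is Lipschitz and preserves measure zero, whence $|\R^{P-1}\setminus G|_{\text{Lebesgue}} = 0$. The crux of the argument is precisely the global H\"older regularity of $F_x$ across the $\Gamma_{ij}$: without it the Sard estimate would fail at the interfaces, and this is exactly the obstruction that \proref{MNT-Light} is designed to overcome.
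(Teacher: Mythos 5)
Your proof is correct and follows essentially the same route as the paper: the factorization $F_{x}=\JacJ\left(u_{1},\cdots,u_{P}\right)\left(x\right)T\left(x,E_{d+1,d}f_{1},\cdots,E_{d+1,d}f_{\dstar},\text{e}_{d+1}\right)$ with full column rank on the left and full row rank on the right (via \propref{current-matrix} and \proref{MNT-Light}), the identification of the exceptional $a$ with $\left(a,1\right)\in\Im\left(F_{x}\right)$ through $\ker P_{a}$, and a H\"older--Sard dimension count made possible by the global $C^{0,\alpha}$ regularity of the rows $\JacSJ\left(u_{i},\mathcal{F}\right)$ across the interfaces. The only difference is in packaging the final measure estimate: where the paper defers to \cite[Lemma 4.1]{2020-Alberti-Capdeboscq-IMRN} and \cite{Greene-Wu-1975a} to conclude $\mathcal{H}^{P-1}\left(B\right)=0$ for the slice $B=\cup_{x}\Im\left(F_{x}\right)\cap\left\{ b_{P}=1\right\} $, you bound the Hausdorff dimension of the full cone $\cup_{x}\Im\left(F_{x}\right)$ in $\R^{P}$ by $\frac{d+\dstar+1}{\alpha}<P$ and then pass to the slice by Fubini and the dilation invariance, which is a valid instantiation of the same counting argument under exactly the stated hypothesis on $P$.
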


The proof of this lemma is given in \subsecref{PfPaFx}. We then translate
this reduction result for $\JacSJ$ into its counterpart for our original
target map $\JacJ$.
\begin{lem}
\label{lem:Main-reduction-lemma} Given any $P>\frac{d+\dstar+1}{\alpha}$,
and any $\left\{ u_{1},\cdots,u_{P}\right\} \in\mathcal{A}\left(P\right)$,
let $G$ be the set of $a=\left(a_{1},\cdots,a_{P-1}\right)\in\mathbb{R}^{P-1}$
such that for all $\ensuremath{x\in}\Omega\setminus\cup_{ij}\Gamma_{ij}$
there holds
\[
\rank\JacJ\left(u_{1}-a_{1}u_{P},\cdots,u_{P-1}-a_{P-1}u_{P}\right)\left(x\right)=d+1.
\]

Then $\left|\R^{P-1}\setminus G\right|_{lebesgue}=0$.
\end{lem}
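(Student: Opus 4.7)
The plan is to deduce this lemma from \lemref{Whitney-Reduction-Lemma-Light} by translating the Whitney reduction established there for the regularised map $F_x$ into the corresponding statement for $\JacJ$ itself. The reason for this two-step approach is that $\JacJ(u_{1},\dots,u_{P})$ can be discontinuous across the interfaces $\Gamma_{ij}$, so one cannot apply a Sard-type argument to it directly; right-multiplying $\JacJ$ by the matrix $\mathbf{T}(x) := T(x, E_{d+1,d}f_{1}, \dots, E_{d+1,d}f_{\dstar}, \mathrm{e}_{d+1})$ furnished by \proref{MNT-Light} and \propref{current-matrix} regularises it into the H\"older continuous object $F_{x}$ to which \lemref{Whitney-Reduction-Lemma-Light} directly applies. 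The bulk of the analysis is therefore already contained in that lemma, and what remains is a linear-algebraic transfer.

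The first step is a factorisation. Writing $\tilde u_{i} := u_{i}-a_{i}u_{P}$ for $i=1,\dots,P-1$ and identifying $P_{a}$ with the $(P-1)\times P$ matrix $[\,I_{P-1}\mid -a\,]$, linearity of the gradient gives
\[
\JacJ(\tilde u_{1},\dots,\tilde u_{P-1})(x) \;=\; P_{a}\cdot \JacJ(u_{1},\dots,u_{P})(x).
\]
Combined with the decomposition $F_{x} = \JacJ(u_{1},\dots,u_{P})(x)\,\mathbf{T}(x)$ provided by \propref{current-matrix}, this yields
\[
P_{a}\circ F_{x} \;=\; \JacJ(\tilde u_{1},\dots,\tilde u_{P-1})(x)\,\mathbf{T}(x)
\]
as linear maps $\mathbb{R}^{\dstar+1}\to \mathbb{R}^{P-1}$.

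The second step is to exploit the rank of $\mathbf{T}(x)$. By \proref{MNT-Light}(1) the family $f_{1}(x),\dots,f_{\dstar}(x)$ has rank $d$ at every $x\in\Omega$, so \eqref{rqnkT} gives $\rank\mathbf{T}(x)=d+1$ for every $x\in\Omega\setminus\cup_{ij}\Gamma_{ij}$. Hence $\mathbf{T}(x)$ is surjective as a linear map $\mathbb{R}^{\dstar+1}\to\mathbb{R}^{d+1}$, and right-multiplication by a surjective linear map does not change the rank of the left factor. Applying this to the identity above yields the pointwise equality
\[
\rank\bigl(P_{a}\circ F_{x}\bigr) \;=\; \rank\JacJ(\tilde u_{1},\dots,\tilde u_{P-1})(x)
\]
valid for every $a\in\mathbb{R}^{P-1}$ and every $x\in\Omega\setminus\cup_{ij}\Gamma_{ij}$.

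To conclude, I will invoke \lemref{Whitney-Reduction-Lemma-Light}, which under the hypothesis $P>\tfrac{d+\dstar+1}{\alpha}$ produces a set $G'\subseteq \mathbb{R}^{P-1}$ of full Lebesgue measure such that $P_{a}\circ F_{x}$ has rank $d+1$ for every $x\in\Omega\setminus\cup_{ij}\Gamma_{ij}$. The rank identity of the previous step then upgrades this to $\rank\JacJ(\tilde u_{1},\dots,\tilde u_{P-1})(x)=d+1$ on the same set of $x$, so $G'\subseteq G$ and $|\mathbb{R}^{P-1}\setminus G|_{\mathrm{Lebesgue}}=0$. I do not foresee any real obstacle inside the present proof: the substantial difficulty, namely controlling the bad $a$ uniformly in $x$ under only $C^{0,\alpha}$ regularity of $F$, is handled by the Sard-type Whitney argument of \lemref{Whitney-Reduction-Lemma-Light}, and the current lemma is essentially its clean algebraic reformulation for the target object $\JacJ$.
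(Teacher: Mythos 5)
Your proposal is correct and follows essentially the same route as the paper: both factor $P_{a}\circ F_{x}=\JacJ\left(u_{1}-a_{1}u_{P},\cdots,u_{P-1}-a_{P-1}u_{P}\right)\left(x\right)\,T\left(x,E_{d+1,d}f_{1},\cdots,E_{d+1,d}f_{\dstar},\mathrm{e}_{d+1}\right)$ and invoke \lemref{Whitney-Reduction-Lemma-Light} together with the rank-$\left(d+1\right)$ property of $T$ from \propref{current-matrix}. The only cosmetic difference is that you use surjectivity of $T$ to get an exact rank identity, whereas the paper uses $\rank\left(\mathcal{J}\mathcal{T}\right)\leq\min\left(\rank\mathcal{J},\rank\mathcal{T}\right)$ plus the trivial bound $\rank\mathcal{J}\leq d+1$; both arguments are valid.
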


\begin{proof}
Given $\mathcal{F}=\left\{ f_{1}(x),\cdots,f_{\dstar}(x)\right\} \in\left(C^{0,1}\left(\Omega;\mathbb{R}^{d}\right)\right)^{\dstar}$
as defined in proposition~\ref{pro:MNT-Light}, for $x\in\Omega\setminus\cup_{ij}\Gamma_{ij}$,
let $F_{x}:\R^{\dstar+1}\rightarrow\mathbb{R}^{P}$ as given in \eqref{defFWhitney}.
Thanks to \lemref{Whitney-Reduction-Lemma-Light}, we have rank $F_{x}=d+1$
and for a.e $a\in\R^{P-1}$, $P_{a}\circ F_{x}$ has rank $d+1$ which
means 
\[
\text{rank \ensuremath{\left[\begin{array}{c}
\JacSJ\left(u_{1}-a_{1}u_{P},\mathcal{F}\right)\\
\vdots\\
\JacSJ\left(u_{P-1}-a_{P-1}u_{P},\mathcal{F}\right)
\end{array}\right]\left(x\right)}}=d+1.
\]
Denote $\mathcal{\mathcal{J}}=$$\JacJ\left(u_{1}-a_{1}u_{P},\cdots,u_{P-1}-a_{P-1}u_{P}\right)$
and $\mathcal{T}=T\left(x,E_{d+1,d}f_{1},\cdots,E_{d+1,d}f_{d^{\star}},\text{e}_{d+1}\right)$
so that 
\[
\left[\begin{array}{c}
\JacSJ\left(u_{1}-a_{1}u_{P},\mathcal{F}\right)\\
\vdots\\
\JacSJ\left(u_{P-1}-a_{P-1}u_{P},\mathcal{F}\right)
\end{array}\right]=\mathcal{JT}.
\]
Then, $\text{rank\ensuremath{\left(\mathcal{JT}\right)}}=d+1$, and
since $\rank\left(\mathcal{J}\mathcal{T}\right)\leq\min\left(\rank\left(\mathcal{J}\right),\text{rank\ensuremath{\left(\mathcal{T}\right)}}\right)$,
we conclude that $d+1\geq\rank\left(\mathcal{J}\right)\geq d+1$,
which proves that $\text{rank\ensuremath{\left(\mathcal{J}\right)=d+1}}.$
\end{proof}
With the above lemma, we have now returned to a familiar setting,
where no further complications due to the discontinuous nature of
the coefficients arise. The rest of the proof of the \thmref{mainresult}
now follows an argument similar to the one found in \cite[Theorem 2.3]{2020-Alberti-Capdeboscq-IMRN},
and a variant of the argument above to prove that the set $\mathcal{A}\left(P\right)$
is open, which we include in \secref{EndProofMR}. 

\subsection{Application on an example}

We revisit example~\exref{scalar reconstructing}, namely the reconstruction
of the conductivity from the knowledge of the solution to illustrate
how our result naturally extends existing results derived for uniformly
regular parameters. In addition to assumption~\assuref{Piecewise_assumption_Domain}
and assumption~\assuref{regularitecoeff_}, suppose that $b=c=q=0,$
$A=\gamma I_{d}$ , where $\gamma$ is scalar valued function, and
$\alpha=1.$ 
\begin{prop*}
Given $P>0$ such that, $\mathcal{A}\left(P\right)\neq\emptyset$,
and $\left\{ u_{1},\cdots,u_{P}\right\} \in\mathcal{A}\left(P\right).$
For each $\ell\in\left\{ 1,\cdots,P\right\} $, $u_{\ell}\in BV\left(\Omega\right)$,
and its singular part is a jump set. The union over $\ell$ of these
jump sets is $\cup_{i,j}\Gamma_{ij}.$ 

Given $x\in\Gamma_{ij},$ let $n\left(x\right)$ be the normal pointing
from $\Omega_{i}$ to $\Omega_{j},$ that is, $x+tn\left(x\right)\in\Omega_{i}$
for $t<0$ and $x+tn\left(x\right)\in\Omega_{j}$ for $t>0$, provided
$t$ is small enough. 

Let $u_{p}$ be such that $\lim_{t\to0^{+}}\left|Du_{p}\left(x+tn\right)\right|=\max_{k\in\left\{ 1,\cdots,P\right\} }\lim_{t\to0^{+}}\left|Du_{k}\left(x+tn\right)\right|$.
Then 
\[
\lim_{t\to0^{+}}\ln\left|Du_{p}\left(x+tn\left(x\right)\right)\cdot n\left(x\right)\right|-\ln\left|Du_{p}\left(x-tn\left(x\right)\right)\cdot n\left(x\right)\right|=-\left[\ln\gamma\left(x\right)\right]_{ij},
\]
where 
\[
\left[\ln\gamma\left(x\right)\right]_{ij}=\lim_{{h\to x\atop h\in\Omega_{j}}}\ln\gamma\left(h\right)-\lim_{{h\to x\atop h\in\Omega_{i}}}\ln\gamma\left(h\right).
\]

The absolutely continuous part of $D\ln\gamma$ with respect to the
Lebesgue measure is determined by 
\[
\begin{vmatrix}Du_{1}\\
\vdots\\
Du_{P}
\end{vmatrix}D\ln\gamma=\begin{vmatrix}\Delta u_{1}\\
\vdots\\
\Delta u_{P}
\end{vmatrix}\text{ on }\Omega_{k},\quad k=\left\{ 1,\cdots,N+1\right\} .
\]
\end{prop*}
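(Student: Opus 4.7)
The plan is to split the proposition into three independent assertions and treat them in turn. I will first locate the singular support of the distributional gradient of each $u_\ell$ and thereby read off the $BV$ structure claimed in the first sentence; then use the transmission condition inherited from $-\div(\gamma Du_p)=0$ to produce the claimed logarithmic jump across each $\Gamma_{ij}$; and finally expand the PDE on each $\Omega_k$ to turn the statement about the absolutely continuous part into elementary linear algebra.

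\textbf{Step 1: BV structure and location of the singular set.} By \lemref{regula-global} each $u_\ell$ belongs to $H^{1}(\Omega)\cap C^{1,\alpha}(\overline{\Omega_k})$ for every $k$, hence $u_\ell\in W^{1,\infty}(\Omega)\subset BV(\Omega)$. Viewed as a vector-valued $BV$ field, $Du_\ell$ splits into an absolutely continuous part equal to the pointwise $C^{0,\alpha}$-gradient on $\Omega\setminus\cup_{ij}\Gamma_{ij}$ and a singular (jump) part carried by $\cup_{ij}\Gamma_{ij}$; the inclusion ``jump set $\subseteq\cup_{ij}\Gamma_{ij}$'' is immediate from interior regularity. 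For the converse, fix $x\in\Gamma_{ij}$ and note that the tangential components of $Du_\ell$ are automatically continuous across $\Gamma_{ij}$, being tangential derivatives of the single $H^{1}$-trace of $u_\ell$. If no $Du_\ell$ jumped at $x$, then the transmission identity of Step 2, combined with $\gamma^{+}\neq\gamma^{-}$, would force $\partial_n u_\ell^{\pm}(x)=0$ for every $\ell$; by $C^{1,\alpha}$-continuity up to the interface, the $P\times d$ gradient matrix would then be rank-deficient in a whole interior neighbourhood of $x$, contradicting the rank-$(d+1)$ property of $\mathbf{J}$ built into $\mathcal{A}(P)$.

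\textbf{Step 2: Jump formula.} The transmission relation
\[
\gamma^{+}(x)\lim_{t\to 0^{+}}Du_p(x+tn)\cdot n=\gamma^{-}(x)\lim_{t\to 0^{+}}Du_p(x-tn)\cdot n
\]
is obtained in the standard way from the weak formulation $\int_{\Omega}\gamma Du_p\cdot Dv\,dx=0$ tested against functions localised around $x$ and straddling the interface. The non-vanishing of both one-sided normal derivatives for the maximum-gradient choice $u_p$ follows from the rank argument already used in Step 1: if $\partial_n u_p^{+}(x)=0$, then $|Du_p^{+}(x)|$ coincides with its tangential norm, so the inequality $|Du_\ell^{+}(x)|\le|Du_p^{+}(x)|$ forces every $Du_\ell^{+}(x)$ to be tangential, contradicting the rank hypothesis at nearby interior points. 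Dividing the transmission equality and taking $\log|\cdot|$ then yields
\[
\log|Du_p^{+}\cdot n|-\log|Du_p^{-}\cdot n|=\log\gamma^{-}-\log\gamma^{+}=-[\log\gamma(x)]_{ij},
\]
as announced.

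\textbf{Step 3 and main obstacle.} On each $\Omega_k$ the coefficient $\gamma$ and each $u_\ell$ are classical, so $-\div(\gamma Du_\ell)=0$ expands pointwise to $D\log\gamma\cdot Du_\ell=-\Delta u_\ell$. Stacking these $P$ scalar identities row by row produces the matrix equation of the statement (up to the sign suppressed on the right). Since $\{u_1,\dots,u_P\}\in\mathcal{A}(P)$, the stacked $P\times d$ matrix has rank $d$ on $\Omega_k$, so the overdetermined system determines $D\log\gamma$ pointwise through any left inverse (e.g.\ the Moore--Penrose pseudoinverse), which identifies the Lebesgue-absolutely continuous part of $D\log\gamma$. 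The main technical subtlety is therefore concentrated in Steps 1--2: one must transfer the \emph{interior} non-vanishing Jacobian hypothesis to a non-degeneracy statement for the normal component of $Du_p$ \emph{at the interface}. This is the only place where the $C^{1,\alpha}$-regularity up to each side of $\Gamma_{ij}$ granted by \lemref{regula-global} plays an essential role; everything else reduces to classical transmission theory and linear algebra.
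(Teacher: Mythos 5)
Your Step 3 and the inclusion ``jump set $\subseteq\cup_{i,j}\Gamma_{ij}$'' are fine and coincide with the paper's argument, but the interface non-degeneracy --- which you yourself single out as the main obstacle --- is precisely where your proposal breaks, in two places. First, in Step 2 the inference ``if $\partial_n u_p^+(x)=0$ then, because $|Du_\ell^+(x)|\le|Du_p^+(x)|$, every $Du_\ell^+(x)$ is tangential'' is a non sequitur: a norm inequality says nothing about direction, and a gradient of smaller modulus can carry all of the normal flux while the maximal one is purely tangential (nothing in the maximality of $|Du_p^+|$ by itself forces $\partial_n u_p^+\neq0$; this delicate point cannot be recovered from magnitude comparisons). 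Second, in Step 1 and again at the end of Step 2 you obtain the contradiction by claiming that degeneracy at the interface point propagates ``by $C^{1,\alpha}$-continuity to a whole interior neighbourhood''. This is backwards: rank is lower semicontinuous, so it is full rank at a point that propagates to nearby points, whereas rank-deficiency of the one-sided limits at $x\in\Gamma_{ij}$ is perfectly compatible with $\rank\JacJ=d+1$ at every interior point, which is all that membership in $\mathcal{A}(P)$ gives you. As written, neither the claim that every point of $\cup_{i,j}\Gamma_{ij}$ is a jump point nor the finiteness of the logarithms in the jump formula is established.

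The paper closes this at the interface point itself, not through a neighbourhood argument: by proposition~\proref{MNT-Light} the map $\JacSJ\left(u_{\ell},\mathcal{F}\right)=\left(\gamma Du_{\ell}\cdot f_{1},Du_{\ell}\cdot f_{2},\cdots,Du_{\ell}\cdot f_{\dstar},u_{\ell}\right)$ is continuous on $\Omega$ with $f_{1}=n$ on each $\Gamma_{ij}$, and the stacked $P\times\left(\dstar+1\right)$ matrix is asserted to have rank $d+1$ at $x\in\Gamma_{ij}$; since all columns other than the flux column involve only tangential derivatives and the values $u_{\ell}$, they can contribute at most rank $d$ there, so some $\gamma Du_{\ell}\cdot n$ must be non-zero at $x$. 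That pointwise non-degeneracy on the interface is the missing ingredient: it is what makes the union of the jump sets all of $\cup_{i,j}\Gamma_{ij}$ and the logarithms finite, after which continuity of the flux $\gamma Du\cdot n$ across $\Gamma_{ij}$ gives the jump identity exactly as in your Step 2, and your Step 3 goes through unchanged. If you wish to keep your structure, you must first transfer the rank-$\left(d+1\right)$ property to the one-sided limits on $\Gamma_{ij}$ (this is exactly what the continuous, flux-based Jacobian $\JacSJ$ is introduced for); neither continuity of the interior rank condition nor the magnitude-maximality of $u_{p}$ accomplishes this.
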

\begin{rem*}
In particular, $\gamma$ is uniquely determined up to a multiplicative
constant.
\end{rem*}
\begin{proof}
Thanks to \lemref{regula-global}, and proposition~\proref{MNT-Light},
there holds
\[
\JacSJ\left(u_{\ell},\mathcal{F}\right)=\left(\gamma Du_{\ell}\cdot f_{1},Du_{\ell}\cdot f_{2},\cdots,Du_{\ell}\cdot f_{d^{*}},u_{\ell}\right)\in C^{0,1}\left(\Omega\right).
\]
Because $\rank\mathcal{F}=d,$ the discontinuities of $Du_{\ell}$
are included in the $\Gamma_{ij}$. For any given $\ell$, it may
not correspond exactly, to the entire $\cup_{i,j}\Gamma_{ij}$ since
$Du_{\ell}\cdot f_{1}$ may possibly vanish on these interfaces; however,
\[
\text{rank \ensuremath{\left[\JacSJ\left(u_{1},\mathcal{F}\right),\cdots,\JacSJ\left(u_{P},\mathcal{F}\right)\right]\left(x\right)}}=d+1\text{ for all }x\in\Omega,
\]
thus in particular, $\rank\left[\left(\gamma Du_{1}\cdot f_{1},\cdots,\gamma Du_{P}\cdot f_{1}\right)\right]\left(x\right)=1$
on $\cup_{i,j}\Gamma_{ij},$ thus the set is indeed the whole $\cup_{i,j}\Gamma_{ij}$
. Equipped with all interfaces $\Gamma_{ij}$, and a set of associated
normal vectors, we may recover the jumps between the different regions. 

Thanks to proposition~\proref{MNT-Light}, $\gamma Du_{\ell}\cdot f_{1}\in C^{0,1}\left(\Omega\right),$and
$f_{1}\left(x\right)=n(x)\text{ on each \ensuremath{\Gamma_{ij}} }$.

Since $\lim_{t\to0^{+}}\left|Du_{p}\left(x+tn\right)\right|=\max_{k\in\left\{ 1,\cdots,P\right\} }\lim_{t\to0^{+}}\left|Du_{k}\left(x+tn\right)\right|,$and
not all such limit can be zero by since $(u_{1},\cdots,u_{P})\in\mathcal{A}\left(P\right),$
$\lim_{t\to0}\ln\left|\left(\gamma Du_{p}\right)\left(x+tn\right)\cdot n\right|\in\mathbb{R}.$
In particular, 
\[
\lim_{t\to0^{+}}\ln\left|\left(\gamma Du_{p}\right)\left(x+tn\right)\cdot n\right|-\ln\left|\left(\gamma Du_{p}\right)\left(x-tn\right)\cdot n\right|=0,
\]
 and therefore 
\[
\lim_{t\to0^{+}}\ln\left|Du_{p}\left(x+tn\right)\cdot n\right|-\ln\left|Du_{p}\left(x-tn\right)\cdot n\right|=-\left[\ln\gamma\left(x\right)\right]_{ij}.
\]

The final identity is obtained exactly as in the regular case.
\end{proof}

\section{\label{sec:construction}Proof of \thmref{piecewise_regular-construction}}

We construct a group of solutions which satisfies the Jacobian constraint
locally within one subdomain $\Omega_{i}$ and extend them one subdomain
at a time. To do this rigorously, we introduce a construction map,
and an associated index map\emph{,} defining the order in which the
extension is performed\emph{.} For any permutation $\consi:\left\{ 1,\cdots,N+1\right\} \to\left\{ 1,\cdots,N+1\right\} $,
we denote $\Omega_{I_{k}}=\Omega_{\mathbf{\consi}(1)}\cup\cdots\cup\Omega_{\consi(k)}$
for $k\in\left\{ 1,\cdots,N+1\right\} $. We have the following definition:
\begin{defn}
\label{def:construction-map} We say a permutation $\consi:\left\{ 1,\cdots,N+1\right\} \to\left\{ 1,\cdots,N+1\right\} $
is a \emph{construction map, }if the following holds :
\begin{quotation}
For any $j\in\left\{ 2,\cdots,N+1\right\} $, there exists a unique
$\mathbf{k}\left(j\right)\in\left\{ 1,\cdots,j-1\right\} $ such that
$\partial\Omega_{\consi\left(j\right)}\cap\partial\Omega_{I_{j-1}}=\Gamma_{\consi\left(j\right)\consi\left(\mathbf{k}\left(j\right)\right)}$
.
\end{quotation}
With $\consi$ a construction map comes $\text{\ensuremath{\mathbf{j_{i}}=\left\{ \text{\ensuremath{\mathbf{j_{i}}^{1}}},\cdots,\ensuremath{\mathbf{j_{i}}^{N+1}}\right\} }}$
the \emph{map index} of $\consi$ defined as follows:
\begin{enumerate}
\item for every $s\in\left\{ 1,\cdots,N+1\right\} $, $\mathbf{j_{i}}^{s}\in\left\{ 1,\cdots,N+1\right\} ^{N+1}$,
\item The starting map $\mathbf{j_{i}}^{1}$ satisfies $\mathbf{j_{i}}^{1}=\left(\consi\left(1\right),\cdots,\consi\left(1\right)\right)$
\item For any $s\in$$\left\{ 2,\cdots,N+1\right\} $ we have $\left(\mathbf{j_{i}}^{s}\right)_{\consi\left(\ell\right)}=\left(\mathbf{j_{i}}^{s-1}\right)_{\mathbf{i}\left(\ell\right)}$
if $\ensuremath{\ell\leq s-1}$, $\left(\mathbf{j_{i}}^{s}\right)_{\consi\left(\ell\right)}=\consi\left(\ell\right)$
if $s=\ell$, and $\ell\geq s+1$, $\left(\mathbf{j_{i}}^{s}\right)_{\consi\left(\ell\right)}$
is defined inductively:
\[
\left(\mathbf{j_{i}}^{s}\right)_{\consi\left(\ell\right)}=\left(\mathbf{j_{i}}^{s}\right)_{\consi\left(\mathbf{k}\left(\ell\right)\right)}.
\]
\end{enumerate}
\end{defn}

Thanks to assumption \ref{assu:Piecewise_assumption_Domain}, for
any $i\in\left\{ 1,\cdots,N+1\right\} $, we can always find a construction
map $\consi$ with $\consi\left(1\right)=i$. A simple example is:
\begin{example}
\label{exa:5_pieces_example}Let $\Omega=\Omega_{1}\cup\Omega_{2}\cup\Omega_{3}\cup\Omega_{4}\cup\Omega_{5}$
in \figref{5inclusionstart}. Then $\consi_{1}:\left\{ 1,2,3,4,5\right\} \rightarrow\left\{ 2,3,1,5,4\right\} $
and $\mathbf{i_{2}}:\left\{ 1,2,3,4,5\right\} \rightarrow\left\{ 2,1,5,4,3\right\} $
are two different construction maps with $\consi_{1}\left(1\right)=\consi_{2}\left(1\right)=2$.
\end{example}

We have
\begin{rem*}
\[
\mathbf{j_{i_{1}}=\left\{ \begin{array}{c}
\left\{ 2,2,2,2,2\right\} \\
\left\{ 2,2,3,2,2\right\} \\
\left\{ 1,2,3,1,1\right\} \\
\left\{ 1,2,3,5,5\right\} \\
\left\{ 1,2,3,4,5\right\} 
\end{array}\right\} }\text{ and }\mathbf{j_{i_{2}}}=\left\{ \begin{array}{c}
\left\{ 2,2,2,2,2\right\} \\
\left\{ 1,2,2,1,1\right\} \\
\left\{ 1,2,2,5,5\right\} \\
\left\{ 1,2,2,4,5\right\} \\
\left\{ 1,2,3,4,5\right\} 
\end{array}\right\} .
\]

Note that for any construction map $\consi$, there holds $\mathbf{j}_{\consi}^{N+1}=\left\{ 1,\cdots,N+1\right\} $.
\end{rem*}
\begin{figure}[H]
\begin{centering}
\tikzset{declare function={sinhx(\x,\y)  =              sinh(\x*cos(deg(\y)))*cos(deg(\x*sin(deg(\y));}} 
\tikzset{declare function={sinhy(\x,\y)  =                cosh(\x*cos(deg(\y)))*sin(deg(\x*sin(deg(\y));}} 
\begin{tikzpicture}
\node at (0, -1.8) {};
    \filldraw[domain=0:2*pi,samples=200,  fill=pink!30!white, variable=\t]  plot ({-0.6+3*exp(-0.5*cos(deg(\t)))*cos(deg(\t))},{-0.05+1.6*sin(deg(\t))});        \node[text=black]  at ( -1.4,0.0) {$5$};              \tikzmath{\x = 1;}        
\filldraw[domain=0:2*pi,samples=200,  fill=cyan, variable=\t]  plot ({0.5*sinhx(\x,\t)+0.86*sinhy(\x,\t)},{0.5*sinhy(\x,\t)-0.86*sinhx(\x,\t)});        \node  at ( 0.8,0) {$1$};        \tikzmath{\x = 0.6;}        
\filldraw[domain=0:2*pi,samples=200,  fill=blue, variable=\t]  plot ({\x*cos(deg(\t))},{\x*sin(deg(\t))});        \node[text=white]  at ( 0.3,0) {$2$};        \tikzmath{\x = 0.2;}        \filldraw[domain=0:2*pi,samples=200,  fill=red, variable=\t]  plot ({-0.1 +\x*cos(deg(\t))},{\x*sin(deg(\t))});        \node[text=white] at (-0.1,0) {$3$};         \tikzmath{\x = 0.8;}        \filldraw[domain=0:2*pi,samples=200,  fill=green, variable=\t] plot ({-2+ exp(-cos(deg(\t)))*cos(deg(\t))},{-0.1+exp(-0.5*cos(deg(\t)))*sin(deg(\t))});       \node at (-3,0) {$4$};
\end{tikzpicture}
\par\end{centering}
\caption{\label{fig:5inclusionstart}A 4 inclusion configuration.}
\end{figure}
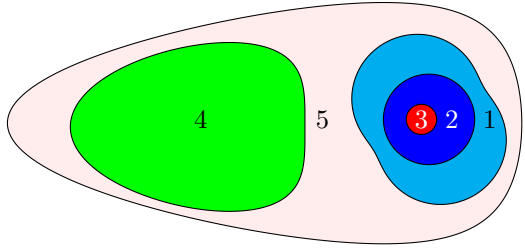

In the sequel, it would be convenient to assume that the Dirichlet
problem boundary value problem associated to $L$ is well posed in
$\Omega$, as it would allow us to control the norm of solutions by
their boundary traces. In fact, well-posedness for a large family
of sub-problems will be used. We denote $L\left[i_{1},i_{2},\cdots,i_{N+1}\right]$
for $i_{1},\cdots,i_{N+1}\in\left\{ 1,\cdots,N+1\right\} $, the second
order elliptic operator with given coefficients $A_{i_{j}},b_{i_{j}},c_{i_{j}},q_{i_{j}}$in
$\Omega_{j}$. We shall use the following lemma.
\begin{lem}
\label{lem:translation_to_get_well-posedness}There exists some $\vartheta>0$
such that for any $\translation\in\left(0,\translationconstant\right)$,
all Dirichlet boundary value problems associated with $L\left[i_{1},\cdots,i_{N+1}\right]+\translation$
where $i_{1},\cdots,i_{N+1}\in\left\{ 1,\cdots,N+1\right\} $ are
well-posed in $\Omega$.
\end{lem}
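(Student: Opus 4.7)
My plan is to combine the Fredholm alternative with the discreteness of the spectrum, applied to each of the finitely many operators in the family $\{L[i_{1},\ldots,i_{N+1}]\}$, and then take the minimum of the resulting thresholds.

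First I would observe that the index set $\{1,\ldots,N+1\}^{N+1}$ over which the tuples $I=(i_{1},\ldots,i_{N+1})$ run is finite, of cardinality $(N+1)^{N+1}$. For each such $I$, $L[I]$ is a second order divergence-form elliptic operator, uniformly elliptic with constant $\lambda$ and whose coefficients are bounded by $\lambda^{-1}$ in $L^{\infty}(\Omega)$, by Assumption~\ref{assu:regularitecoeff_}. In particular, all these bounds are uniform in $I$.

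For a fixed $I$, a standard Gårding-type estimate (absorbing the $b_{I},c_{I},q_{I}$ contributions via Young's inequality and the uniform ellipticity of $A_{I}$) yields a constant $\kappa_{0}=\kappa_{0}(\lambda)>0$, independent of $I$, such that the bilinear form associated with $L[I]+\kappa_{0}$ is coercive on $H_{0}^{1}(\Omega)$. By Lax--Milgram, $L[I]+\kappa_{0}$ then admits a bounded inverse from $H^{-1}(\Omega)$ to $H_{0}^{1}(\Omega)$; composing with the compact Rellich embedding yields a compact self-map $K_{I}$ of $L^{2}(\Omega)$. The Dirichlet problem for $L[I]+\translation$ is then well-posed in $H_{0}^{1}(\Omega)$ if and only if $\Id-(\kappa_{0}-\translation)K_{I}$ is invertible on $L^{2}(\Omega)$, equivalently $(\kappa_{0}-\translation)^{-1}$ is not an eigenvalue of the compact operator $K_{I}$.

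Since eigenvalues of a compact operator can accumulate only at $0$, the set $\Sigma_{I}\subset\mathbb{R}$ of $\translation$ for which $L[I]+\translation$ fails to be well-posed has no finite accumulation point. In particular some $\vartheta_{I}>0$ satisfies $\Sigma_{I}\cap(0,\vartheta_{I})=\emptyset$, and the lemma follows by setting $\translationconstant:=\min_{I}\vartheta_{I}>0$, where the minimum runs over the finitely many tuples $I$. The only step requiring genuine attention is the uniformity in $I$ of the Gårding constant $\kappa_{0}$, but this is immediate from the uniform coefficient bounds in Assumption~\ref{assu:regularitecoeff_}.
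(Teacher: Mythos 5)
Your proposal is correct and follows essentially the same route as the paper: a Gårding-type estimate uniform in the tuple $(i_{1},\ldots,i_{N+1})$ produces a coercive shift, the Rellich--Kondrachov embedding makes the shifted resolvent compact on $L^{2}(\Omega)$, and the discreteness of its nonzero spectrum yields an interval $(0,\vartheta_{I})$ of admissible translations, after which one minimises over the finitely many tuples. The only cosmetic difference is that the paper splits into the well-posed and ill-posed cases (isolated eigenvalue versus openness of the resolvent set) and records an explicit value $\vartheta=\aleph M^{2}/(1+\aleph M)$, whereas you package both cases into the statement that the failure set has no finite accumulation point.
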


The proof of this lemma is given in \subsecref{ProofLemTrans}. 
For any $\translation\in\left(0,\vartheta\right)$ fixed, we first
prove the \thmref{piecewise_regular-construction} for $L+\translation$.
To simplify notations, we write $L$ for $L+\translation$ . Thanks
to \lemref{translation_to_get_well-posedness}, the Dirichlet boundary
value problem associated with $L\left[i_{1},\cdots,i_{N+1}\right]$
is well-posed for any $i_{1},\cdots,i_{N+1}\in\left\{ 1,\cdots,N+1\right\} $.
In the last step, we shall revert to the original operator, now $L-\translation$,
to prove the \thmref{piecewise_regular-construction} using the smallness
of $\translation$ and the regularity of the coefficients.

The proof of \thmref{piecewise_regular-construction} relies on a
series of lemmas. To start the construction, we exhibit functions
satisfying the requirement (\ref{eq:requirement}), which satisfy
$Lu_{i}^{x}=0$ in a neighbourhood of $x$.
\begin{lem}
\label{lem:localcase} Given $j\in\left\{ 1,\cdots,N+1\right\} ,$
for any $\sigma\in\left(0,1\right)$, there exists $\epsilon\in\left(0,1\right)$
depending on $\lambda,\sigma$,$d$ and $\translation$ given by \lemref{translation_to_get_well-posedness}
only such that for any point $x\in\Omega\setminus\cup_{i,j}\Gamma_{ij}$,
there exist $u_{1}^{x},u_{2}^{x},\cdots,u_{d+1}^{x}\in\left(H^{1}\left(\Omega\right)\right)^{d+1}$
such that for $i\in\{1,2,\cdots,d+1\},$ $L\left[j,\cdots,j\right]u_{i}^{x}=0$
in $\Omega$. Moreover there holds
\[
\det\JacJ\left(u_{1}^{x},u_{2}^{x},\cdots,u_{d}^{x}\right)(y)>\sigma
\]
\end{lem}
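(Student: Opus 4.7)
The plan is to reduce to a single-coefficient setting and apply the Runge approximation strategy already used in the smooth case in \cite{2020-Alberti-Capdeboscq-IMRN}. Since $L[j,\ldots,j]$ is an elliptic operator whose coefficients $A_j, b_j, c_j, q_j$ are by assumption uniformly Hölder (respectively Lipschitz for the leading coefficient when $d\geq 3$) on the entire domain $\Omega$, the unique continuation property holds globally for $L[j,\ldots,j]$, and, after the translation supplied by \lemref{translation_to_get_well-posedness}, the associated Dirichlet problem is well posed, with a well-posedness constant depending only on $\lambda$ and $\vartheta$. These are precisely the two ingredients that support a Runge approximation argument.

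First, I would produce a local ``reference family'' of solutions. For $\epsilon>0$ to be chosen, consider the $d+1$ functions $w_{0}^{x},\dots,w_{d}^{x}$ obtained by solving in $B(x,2\epsilon)$ the Dirichlet problems $L[j,\ldots,j]w_{k}^{x}=0$ with boundary data $g_{0}\equiv 1$ and $g_{k}(y)=y_{k}-x_{k}$ for $k=1,\ldots,d$. Freezing the coefficients of $L[j,\ldots,j]$ at the point $x$ and using interior Schauder estimates together with the uniform $C^{0,\alpha}$ bound on the coefficients, one gets
\[
\bigl\|w_{k}^{x}-g_{k}\bigr\|_{C^{1,\alpha}(\overline{B(x,\epsilon)})}\longrightarrow 0\quad\text{as }\epsilon\to 0,
\]
with a rate that is uniform in $x\in\Omega$ and $j\in\{1,\ldots,N+1\}$. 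Consequently, for $\epsilon$ small enough depending only on $\lambda, d$ and $\vartheta$, the matrix $\JacJ(w_{0}^{x},\ldots,w_{d}^{x})(y)$ is within $\sigma$ of the identity-like reference matrix on all of $B(x,\epsilon)$, in particular
\[
\det\JacJ(w_{0}^{x},\ldots,w_{d}^{x})(y)>2\sigma\quad\text{for all }y\in B(x,\epsilon).
\]

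Next, I would apply Runge approximation (Lax--Malgrange, in the form valid whenever UCP and Dirichlet well-posedness hold) to replace each $w_{k}^{x}$ by a global solution $u_{k}^{x}\in H^{1}(\Omega)$ of $L[j,\ldots,j]u_{k}^{x}=0$ in $\Omega$ with
\[
\bigl\|u_{k}^{x}-w_{k}^{x}\bigr\|_{L^{2}(B(x,2\epsilon))}<\delta,
\]
for any prescribed $\delta>0$. Interior $C^{1,\alpha}$ Schauder estimates, applied to the solution $u_{k}^{x}-w_{k}^{x}$ of $L[j,\ldots,j]v=0$ on $B(x,2\epsilon)$, upgrade this $L^{2}$ closeness to
\[
\bigl\|u_{k}^{x}-w_{k}^{x}\bigr\|_{C^{1,\alpha}(\overline{B(x,\epsilon)})}\leq C\,\delta,
\]
with a constant $C$ that depends only on $\lambda$, $d$ and $\epsilon$.

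Choosing $\delta$ small enough (depending only on $\sigma, d, \lambda$), continuity of the determinant map then gives
\[
\det\JacJ(u_{0}^{x},\ldots,u_{d}^{x})(y)>\sigma\quad\text{for every }y\in B(x,\epsilon),
\]
which is the conclusion (renaming $u_{0}^{x},\ldots,u_{d}^{x}$ as $u_{1}^{x},\ldots,u_{d+1}^{x}$). The main delicate point is to verify that the radius $\epsilon$ can be chosen uniformly in $x\in\Omega\setminus\cup_{i,j}\Gamma_{ij}$ and in the index $j$; this uniformity ultimately rests on the uniform $C^{0,\alpha}$ bounds imposed by \assuref{regularitecoeff_} and on the fact that the Runge approximation and interior regularity constants depend on the coefficients only through $\lambda$ and $\vartheta$.
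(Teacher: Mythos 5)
Your overall architecture is the same as the paper's: build a local family near $x$ whose extended Jacobian $\JacJ$ is close to a fixed nondegenerate matrix on $B(x,\epsilon)$, use the Runge approximation (unique continuation for $L\left[j,\cdots,j\right]$, whose coefficients are H\"older on all of $\Omega$, plus the well-posedness supplied by \lemref{translation_to_get_well-posedness}) to replace it by global solutions up to a small $L^{2}$ error, upgrade to $C^{1}$ closeness by interior estimates, and conclude by multilinearity of the determinant, with uniformity in $x$ and $j$ coming from the uniform bounds of \assuref{regularitecoeff_}. The one genuine difference is the local reference family: you take $w_{k}^{x}$ solving $L\left[j,\cdots,j\right]w_{k}^{x}=0$ in $B(x,2\epsilon)$ with affine boundary data $1$ and $y_{k}-x_{k}$, whereas the paper builds exact solutions of the frozen-coefficient operator from one-dimensional constant-coefficient ODEs and then compares them with the local variable-coefficient solutions. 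Your variant can be made to work, but be careful on two points. First, the affine functions are \emph{not} solutions of the frozen operator: because of the lower-order terms $b_{j},c_{j},q_{j}+\translation$, the residual $L\left[j,\cdots,j\right]g_{k}$ contains $O(1)$ contributions (e.g.\ $c_{j}^{k}$, $q_{j}+\translation$), so ``freezing the coefficients plus interior Schauder'' does not by itself give smallness; the smallness of $w_{k}^{x}-g_{k}$ comes from the smallness of the ball (energy estimate with Poincar\'e constant $O(\epsilon)$, plus subtracting the value at $x$ in the divergence-form part, then a rescaled Schauder/interpolation step), which is precisely the quantitative computation the paper performs for its difference $u_{i}-v_{i}$; the paper's ODE construction is designed exactly so that the residual amplitude is $O(\epsilon^{\alpha})$ from the start. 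Second, the claim $\left\Vert w_{k}^{x}-g_{k}\right\Vert _{C^{1,\alpha}(\overline{B(x,\epsilon)})}\to0$ is stronger than what is true: the $C^{0,\alpha}$ seminorm of $Dw_{k}^{x}-Dg_{k}$ stays only bounded, not small. Fortunately only sup-norm closeness of the functions and their gradients is needed to control $\det\JacJ$, so this is an overstatement rather than a gap; with these two points repaired your argument is a sound, essentially parallel alternative to the paper's proof.
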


for any $y\in B\left(x,\epsilon\right)\cap\Omega$.
\begin{proof}
Fix $j=1$. Consider $x=0\in\Omega$, and $B_{x}=B\left(0,2\text{diam}\Omega\right)$
a ball centred in $x$ containing $\Omega$. In the sequel $C$ represents
any constant depending on $d$ and $\lambda$ given in assumption~\ref{assu:regularitecoeff_},
and $\translation$ given by \lemref{translation_to_get_well-posedness},
only. Note that the coefficients of $L\left[1,\cdots,1\right]$, namely
$A_{1},b_{1},c_{1},q_{1}+\translation$, are Hölder continuous on
$B_{x}$. Consider the constant coefficient partial differential operator
\begin{equation}
L_{0}:v\to-\text{div}\left(A_{1}\left(0\right)Dv+b_{1}\left(0\right)v\right)+c_{1}\left(0\right)\cdot Dv+\left(q_{1}\left(0\right)+\translation\right)v.\label{eq:constcoef for piece-wise regular elliptic}
\end{equation}
For $i=1,\cdots,d$, let $u_{i}=f\left(x_{i}\right)$ be the solution
of constant coefficients ODE 
\[
\begin{cases}
-\left(A_{1}\right)_{ii}\left(0\right)f^{\prime\prime}\left(t\right)+\left(c_{1}^{i}\left(0\right)-b_{1}^{i}\left(0\right)\right)f^{\prime}\left(t\right)+\left(q_{1}\left(0\right)+\translation\right)f\left(t\right)=0 & \text{for all \ensuremath{t\in\mathbb{R}},}\\
f^{\prime}(0)=1,\\
f(0)=0.
\end{cases}
\]
Let $u_{d+1}=f\left(x_{1}\right)$ be the solution of the following
second-order constant coefficients ODE initial value problem:
\[
\begin{cases}
-\left(A_{1}\right)_{11}\left(0\right)f^{\prime\prime}\left(t\right)+\left(c_{1}^{1}(0)-b_{1}^{1}(0)\right)f^{\prime}\left(t\right)+\left(q_{1}\left(0\right)+\translation\right)f\left(t\right)=0 & \text{for all \ensuremath{t\in\mathbb{R}},}\\
f^{\prime}(0)=0,\\
f(0)=1.
\end{cases}
\]
We observe that, for all $i\in\left\{ 1,\cdots,d+1\right\} ,$ $L_{0}u_{i}=0$
in $\Omega$, and $\det\JacJ\left(u_{1},\cdots,u_{d+1}\right)\left(0\right)=1.$
We now turn to solutions for the boundary value problem variable coefficients.
Set $V_{\epsilon}=B(0,2\epsilon)\subset B_{x}$ for some $\epsilon\in\left(0,\min\left(\frac{1}{2},\frac{1}{2}\text{diam\ensuremath{\Omega}}\right)\right)$
to be chosen later. We shall construct $u_{1}^{x},\cdots,u_{d+1}^{x}$
in $H_{\text{loc}}^{1}\left(B_{x}\right)$, the required construction
being obtained by taking the restriction to $\Omega.$ Consider the
$d+1$ Dirichlet problems
\[
\begin{cases}
L\left[1,\cdots,1\right]v_{j}=0 & \text{in }V_{\epsilon},\\
v_{j}=u_{j} & \text{on }\partial V_{\epsilon},
\end{cases}\quad j=1,\cdots,d+1.
\]
Note that this problem is well posed for $\epsilon$ small enough.
Thanks to \lemref{regula-global}, $v_{1},v_{2},\cdots,v_{d+1}$ are
well defined and in $C^{1,\alpha}\left(V_{\epsilon}\right)$. Set
for $i=1,\cdots,d+1$, $\delta_{i}^{1}\coloneqq-\left(A_{1}(x)-A_{1}(0)\right)Du_{i}-\left(b_{1}(x)-b_{1}\left(0\right)\right)u_{i},$
and $\delta_{i}^{2}:=\left(c_{1}(x)-c_{1}\left(0\right)\right)\cdot Du_{i}+\left(q_{1}\left(x\right)-q_{1}\left(0\right)\right)u_{i}.$
Then, for each $i$, 
\[
L\left[1,\cdots,1\right]\left(u_{i}-v_{i}\right)=\text{div}\left(\delta_{i}^{1}\right)+\delta_{i}^{2},
\]
and
\[
\left\Vert Du_{i}-Dv_{i}\right\Vert _{C^{0,\alpha}\big(V_{\epsilon}\big)}\leq C\left(\left\Vert \delta_{i}^{1}\right\Vert _{C^{0,\alpha}\big(V_{\epsilon}\big)}+\left\Vert \delta_{i}^{2}\right\Vert _{C^{0,\alpha}\big(V_{\epsilon}\big)}\right).
\]
 In particular, 
\begin{equation}
\left\Vert Du_{i}-Dv_{i}\right\Vert _{C^{0,\frac{\alpha}{2}}\big(V_{\epsilon}\big)}\leq C\left(\left\Vert \delta_{i}^{1}\right\Vert _{C^{0,\alpha}\big(V_{\epsilon}\big)}+\left\Vert \delta_{i}^{2}\right\Vert _{C^{0,\alpha}\big(V_{\epsilon}\big)}\right)\text{diam}\left(V_{\epsilon}\right)^{\frac{\alpha}{2}}\leq C\epsilon^{\frac{\alpha}{2}}.\label{eq:lmv}
\end{equation}
By an integration by part, and Poincaré's inequality (with a constant
chosen to be valid for any $\epsilon\in\left(0,1\right)$),
\[
\left\Vert Du_{i}-Dv_{i}\right\Vert _{L^{2}\big(V_{\epsilon}\big)}\leq C\left(\left\Vert \delta_{i}^{1}\right\Vert _{L^{2}\big(V_{\epsilon}\big)}+C_{\text{Poincaré}}\left\Vert \delta_{i}^{2}\right\Vert _{L^{2}\big(V_{\epsilon}\big)}\right).
\]
We compute, using the Hölder regularity of the parameters, 
\[
\int_{V_{\epsilon}}\left(\delta_{i}^{1}\right)^{2}\text{d}x+\int_{V_{\epsilon}}\left(\delta_{i}^{2}\right)^{2}\text{d}x\leq C\epsilon^{d+2\alpha}.
\]
Inserting this estimate in (\ref{eq:lmv}) we obtain
\[
\left\Vert u_{i}-v_{i}\right\Vert _{H_{0}^{1}\big(V_{\epsilon}\big)}\leq C\epsilon^{d/2+\alpha},
\]
and using that for any $x,y\in V_{\epsilon}$ and any $f,$there holds
$\left\Vert f\right\Vert _{\infty}\leq\left\Vert f\right\Vert _{C^{0,\frac{\alpha}{2}}\big(V_{\epsilon}\big)}\text{diam}\left(V_{\epsilon}\right)^{\frac{\alpha}{2}}+\left|V_{\epsilon}\right|^{-\frac{1}{2}}\left\Vert f\right\Vert _{L^{2}\big(V_{\epsilon}\big)},$we
conclude that 
\[
\left\Vert Du_{i}-Dv_{i}\right\Vert _{L^{\infty}\left(V_{\epsilon}\right)}\leq C\epsilon^{\alpha}.
\]
Because of assumption~\assuref{regularitecoeff_}, the operator $L\left[1,\cdots,1\right]$
enjoys a Unique Continuation Property on a $B_{x}$. Thus, for each
$i$ there exists $u_{i}^{x}\in H^{1}\left(B_{x}\right)\cap C_{\text{loc}}^{1,\alpha}\left(B_{x}\right)$
such that $L\left[1,\cdots,1\right]u_{i}^{x}=0\text{ on }B_{x},$
and $\left\Vert u_{i}^{x}-v_{i}\right\Vert _{L^{2}\left(V_{\epsilon}\right)}<\epsilon.$
Thanks to \lemref{regula-global} this implies $\left\Vert Du_{i}^{x}-Dv_{i}\right\Vert _{L^{\infty}\left(B_{\epsilon}\right)}\leq C\epsilon^{\alpha},$
where $B_{\epsilon}=B\left(0,\epsilon\right)$, and in turn, 
\[
\left\Vert Du_{i}-Du_{i}^{x}\right\Vert _{L^{\infty}\left(B_{\epsilon}\right)}\leq C\epsilon^{\alpha}.
\]
Since $\det\JacJ$ is multi-linear, 
\begin{multline*}
\det\left(\JacJ\left(u_{1},\cdots,u_{d+1}\right)-\JacJ\left(u_{1}^{x},\cdots,u_{d+1}^{x}\right)\right)\\
\leq\left(d+1\right)\left(\sum_{i=1}^{d+1}\left|Du_{i}\right|+\left|Du_{i}^{x}\right|\right)^{d}\max_{1\leq i\leq d+1}\left|Du_{i}-Du_{i}^{x}\right|
\end{multline*}
Therefore 
\[
\sup_{B_{\epsilon}}\det\left|\JacJ\left(u_{1},\cdots,u_{d+1}\right)-\JacJ\left(u_{1}^{x},\cdots,u_{d+1}^{x}\right)\right|\leq C\epsilon^{\alpha}.
\]
Since $\det\JacJ\left(u_{1},u_{2},\cdots,u_{d+1}\right)\left(0\right)=1$,
for any $\sigma\in\left(0,1\right)$ there exists $\epsilon$, depending
$\lambda,d,\sigma$ and $\translation$ only such that 
\[
\min_{B_{\epsilon}}\det\JacJ\left(u_{1}^{x},\cdots,u_{d+1}^{x}\right)>\sigma.
\]
\end{proof}
The following lemma extends a solution across an interface.
\begin{lem}
\label{lem:SmallExtension} Let $\consi$ be a construction map as
defined in definition~\ref{def:construction-map}, and $\mathbf{j_{i}}$
the associated index map. Given $k\in\left\{ 1,\ldots,N+1\right\} $,
write $\Gamma_{k}=\partial\Omega_{I_{k-1}}\cap\partial\Omega_{\consi(k)},$
and $L_{k}=L\left[\mathbf{j_{i}}^{k}\right]$. Let 
\[
W_{k}=\mathring{\overline{\cup_{\left\{ \ell:\left(\mathbf{j_{i}}^{k}\right)_{\ell}=\left(\mathbf{j_{i}}^{k-1}\right)_{\ell}\right\} }\Omega_{\ell}}}.
\]
In other words, $W_{k}$ is the open set where coefficients of $L_{k}$
are almost everywhere the same as those of $L_{k-1}.$ Suppose that
$u\in H^{1}\left(W_{k}\right)$ is a weak solution of $L_{k}u=L_{k-1}u=0$
in $W_{k}$. For any $\delta>0,$ there exists an open set $U$, such
that $W_{k}\cup\Gamma_{k}\subset U\subset\Omega$ and $v\in H^{1}(U)$
such that $L_{k}v=0$ in $U$ and 
\[
\left\Vert u-v\right\Vert _{H^{1}\left(W_{k}\right)}<\delta.
\]
\end{lem}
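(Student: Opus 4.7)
The proof is a Runge--Lax--Malgrange-type approximation dualised via Hahn--Banach.

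\emph{Setup.} I would take $U := W_{k}\cup N_{\eta}(\Gamma_{k})$ with $N_{\eta}(\Gamma_{k})=\{x\in\R^{d}:d(x,\Gamma_{k})<\eta\}$ and $\eta\in(0,d_{0}/2)$. Then $U\subset\Omega$ is open, contains $W_{k}\cup\Gamma_{k}$, and the ``collar'' $U\setminus\overline{W_{k}}$ is a connected open subset of $\Omega_{\consi(k)}$ bounded away from $\Gamma_{k}$ by the $C^{2}$ surface $\partial N_{\eta}(\Gamma_{k})\cap\Omega_{\consi(k)}$. By \lemref{translation_to_get_well-posedness}, the Dirichlet problems for $L_{k}$ and $L_{k}^{*}$ on $U$ and on $W_{k}$ are well posed. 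Writing $S(D):=\{w\in H^{1}(D):L_{k}w=0\}$, the lemma is equivalent to the $H^{1}(W_{k})$-density of the restriction map $\rho:S(U)\to S(W_{k})$, $v\mapsto v|_{W_{k}}$.

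\emph{Hahn--Banach reduction and adjoint problem.} Given $\ell\in(H^{1}(W_{k}))^{*}$ vanishing on $\rho(S(U))$, I represent $\ell$ by Riesz as $\ell(w)=\int_{W_{k}}(fw+g\cdot\nabla w)\,dx$ with $(f,g)\in L^{2}(W_{k})\times L^{2}(W_{k})^{d}$. Extending by zero to $U$ and setting $F:=f-\div(g)\in H^{-1}(U)\subset(H^{1}(U))^{*}$, \lemref{translation_to_get_well-posedness} yields $\phi\in H_{0}^{1}(U)$ solving $L_{k}^{*}\phi=F$. For any $v\in S(U)$, the symmetry identity $a^{*}(\phi,v)=a(v,\phi)$ between the bilinear forms of $L_{k}$ and $L_{k}^{*}$ together with $a(v,\phi)=0$ (weak formulation with test $\phi\in H_{0}^{1}(U)$) and Green's formula (using $\phi|_{\partial U}=0$) give
\[
\ell(v|_{W_{k}})=\langle F,v\rangle_{U}=a^{*}(\phi,v)-\int_{\partial U}(A^{T}\nabla\phi)\cdot n\,v\,dS=-\int_{\partial U}(A^{T}\nabla\phi)\cdot n\,v\,dS.
\]
The hypothesis $\ell|_{\rho(S(U))}=0$, combined with the surjectivity of the trace map $S(U)\to H^{1/2}(\partial U)$ (a consequence of well posedness of $L_{k}$ on $U$), forces the conormal derivative $(A^{T}\nabla\phi)\cdot n\equiv0$ in $H^{-1/2}(\partial U)$.

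\emph{Cauchy uniqueness and conclusion.} On the collar, $F=0$ and $\phi$ satisfies $L^{*}_{\consi(k)}\phi=0$ with the globally $C^{0,\alpha}$ (respectively $C^{0,1}$ when $d\geq3$) coefficients of $L_{\consi(k)}^{*}$, with zero Dirichlet and zero conormal traces on the $C^{2}$ outer boundary---full Cauchy data by the uniform ellipticity of $A_{\consi(k)}$. Classical Cauchy uniqueness \cite{zbMATH03168004} followed by interior unique continuation on the connected collar forces $\phi\equiv0$ throughout $U\setminus\overline{W_{k}}$. The divergence-form transmission condition across $\Gamma_{k}$---continuity of $(A^{T}\nabla\phi)\cdot n$, with the $c\phi$-contribution vanishing since $\phi|_{\Gamma_{k}}=0$---then propagates $(A^{T}\nabla\phi)\cdot n=0$ to the $W_{k}$-side of $\Gamma_{k}$; combined with $\phi=0$ and $(A^{T}\nabla\phi)\cdot n=0$ on $\partial W_{k}\setminus\Gamma_{k}\subset\partial U$, we obtain $\phi|_{W_{k}}\in H_{0}^{1}(W_{k})$ with $(A^{T}\nabla\phi)\cdot n\equiv0$ on all of $\partial W_{k}$. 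For any $u\in S(W_{k})$, extend arbitrarily to $\tilde{u}\in H^{1}(U)$; because $\phi$ vanishes identically on the collar, $a_{U}^{*}(\phi,\tilde{u})=a_{W_{k}}^{*}(\phi|_{W_{k}},u)=a_{W_{k}}(u,\phi|_{W_{k}})=0$, the last equality since $\phi|_{W_{k}}\in H_{0}^{1}(W_{k})$ is a valid test function for $L_{k}u=0$. Therefore $\ell(u)=\langle F,\tilde{u}\rangle_{U}=a^{*}_{U}(\phi,\tilde{u})-\int_{\partial U}(A^{T}\nabla\phi)\cdot n\,\tilde{u}\,dS=0$, contradicting the assumption. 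The main technical obstacle is the Cauchy uniqueness step; however, since the collar lies entirely within the single regular subdomain $\Omega_{\consi(k)}$, only the classical UCP for elliptic operators with H\"older (or Lipschitz) coefficients is required---consistent with the paper's strategy of never invoking unique continuation across interfaces.
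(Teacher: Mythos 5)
Your overall architecture (Hahn--Banach duality, adjoint problem on the enlarged set, vanishing Cauchy data, unique continuation confined to the collar inside the single regular subdomain $\Omega_{\consi(k)}$) is a legitimate classical alternative to the paper's constructive argument, and the collar geometry, the Green identities, and the final testing step with $\phi|_{W_{k}}\in H_{0}^{1}(W_{k})$ are essentially sound. But there is a genuine gap at the foundation: you invoke \lemref{translation_to_get_well-posedness} to claim that the Dirichlet problems for $L_{k}$ and $L_{k}^{*}$ are well posed \emph{on $U$} (and on $W_{k}$). That lemma only asserts well-posedness of the Dirichlet problems $L\left[i_{1},\cdots,i_{N+1}\right]+\translation$ \emph{in $\Omega$}; well-posedness on a subdomain does not follow (zero may be a Dirichlet eigenvalue of $L_{k}$ on $U$ even though it is not on $\Omega$). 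Your proof uses this unproved fact twice, and both uses are load-bearing: (i) the solvability of $L_{k}^{*}\phi=F$ with $\phi\in H_{0}^{1}(U)$, and (ii) the surjectivity of the trace map $S(U)\to H^{1/2}(\partial U)$, without which you cannot conclude that the conormal derivative of $\phi$ vanishes on $\partial U$ (if the kernel is nontrivial, traces of solutions only fill a finite-codimension subspace and the whole dual argument degrades). Note also that you cannot simply move the adjoint problem to $\Omega$, where \lemref{translation_to_get_well-posedness} does apply: the test solutions would then have to be global, and propagating zero Cauchy data into $\Omega\setminus\overline{W_{k}}$ would require unique continuation across interfaces (and $\partial\left(\Omega\setminus\overline{W_{k}}\right)$ need not even meet $\partial\Omega$), which is precisely what the paper's assumptions do not provide. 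Fixing the gap would require either proving well-posedness on $U$ for at least one admissible collar width (a nontrivial spectral argument for a non-self-adjoint operator), or restructuring as the paper does.

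Indeed, the paper's proof is engineered exactly to avoid posing a Dirichlet problem on a large intermediate domain: it transports $u$ across $\Gamma_{k}$ by the radial dilation $u^{t}=u\circ\psi_{k}^{-1}\left(t\psi_{k}\left(\cdot\right)\right)$, solves a Dirichlet problem only on the \emph{thin} collar $U^{t}$, where coercivity is obtained from a Poincar\'e inequality on thin annuli (so no spectral hypothesis is needed), and then corrects the mismatch by a jump/transmission problem posed on all of $\Omega$, which is well posed by \lemref{translation_to_get_well-posedness}; this also yields a quantitative rate $C\left(1-t\right)^{\beta}\left\Vert u\right\Vert _{C^{1,\alpha}\left(\Omega_{1}\right)}$ rather than a soft density statement. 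Two smaller points: the inclusion $H^{-1}(U)\subset\left(H^{1}(U)\right)^{*}$ you write is not meaningful as stated (work directly with the explicit functional $w\mapsto\int_{W_{k}}\left(fw+g\cdot\nabla w\right)\mathrm{d}x$, which is what you actually use), and the Cauchy-uniqueness step should be phrased as extension by zero across the outer collar boundary followed by the weak UCP available under assumption~\assuref{regularitecoeff_}; these are cosmetic compared with the well-posedness issue above.
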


\begin{proof}
Suppose that $\Gamma_{k}=\Gamma_{1\consi(k)}$, that is, the subdomain
within $\Omega_{I_{k}}$ for whom $\Gamma_{k}$ is a connected component
of its boundary is $\Omega_{1}$. Write $\consi(k)=k$, and $\omega_{1}=\Omega_{1}\cap\left\{ x:d\left(x,\Gamma_{k}\right)<d_{0}\right\} $,
where $d_{0}$ is given by \eqref{defkappa}. Thanks to assumption~\ref{assu:Piecewise_assumption_Domain},
there exists a $C^{2}$ diffeomorphism $\psi_{k}:U_{1,k}\rightarrow V_{1,k},\text{\ensuremath{\Gamma_{k}\mapsto\partial B_{1}}}$,
where $U_{1,k}$ and $V_{1,k}$ are neighbourhoods of $\Gamma_{k}$
and $\partial B_{1}$. Take $\eta>0$ small enough such that $\psi_{k}^{-1}\left(\partial B_{1-\eta}\right)\subset U_{1,k}\cap\omega_{1}.$
Take $t\in\left(\frac{1}{2},1\right)$ and set 
\[
U^{t}\coloneqq\left\{ \psi_{k}^{-1}x\,:\,tx\in\partial B_{1}\setminus\partial B_{1-\eta}\right\} =\left\{ \psi_{k}^{-1}x\,:\,x\in\partial B_{\frac{1}{t}}\setminus\partial B_{\frac{1}{t}\left(1-\eta\right)}\right\} \subset U_{1,k}.
\]
 An example of such a construction is illustrated in \figref{small-ext-pic}.
In what follows, $C$ is any constant, which may change from line
to line, depending on $\Omega$, $\Omega_{1}$, $d$, $\lambda$,
$\translation$ and $\left\Vert \psi_{k}^{-1}\right\Vert _{C^{2}}$.
Write $Y\coloneqq U^{t}\cap\Omega_{1}$, $G\coloneqq U^{t}\cap\Omega_{\consi\left(k\right)}$.
Define 
\[
u^{t}(x)=u\left(\psi_{k}^{-1}\left(t\psi_{k}\left(x\right)\right)\right)\in H^{1}\left(U^{t}\right).
\]

There exists some $\eta_{0}>0$, such that for any $0<\eta<\eta_{0},$
and any $t\in\left(\frac{1}{2},1\right)$ there holds
\begin{equation}
\forall u\in H_{0}^{1}\left(U^{t}\right),\left\langle L_{k}u,u\right\rangle _{H^{-1}\left(U^{t}\right),H_{0}^{1}\left(U^{t}\right)}\geq\frac{1}{3}\lambda\left\Vert u\right\Vert _{H_{0}^{1}\left(U^{t}\right)}^{2}.\label{eq:coerciveL}
\end{equation}

We establish this claim in \subsecref{Proof-of-Poincare}. 

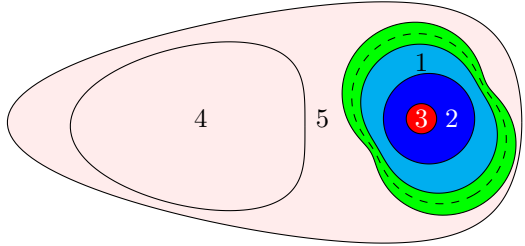
\begin{figure}[H]
\centering{} \tikzset{declare function={sinhx(\x,\y)  =                sinh(\x*cos(deg(\y)))*cos(deg(\x*sin(deg(\y));}} \tikzset{declare function={sinhy(\x,\y)  =                cosh(\x*cos(deg(\y)))*sin(deg(\x*sin(deg(\y));}}    
\begin{tikzpicture}  
\node at (0, -1.8) {};
\filldraw[domain=0:2*pi,samples=200,  fill=pink!30!white, variable=\t]  plot ({-0.6+3*exp(-0.5*cos(deg(\t)))*cos(deg(\t))},{-0.05+1.6*sin(deg(\t))});        \node[text=black]  at ( -1.4,0.0) {$5$};               \tikzmath{\x = 1.1;}        \filldraw[domain=0:2*pi,samples=200,  fill=green, variable=\t]  plot ({0.5*sinhx(\x,\t)+0.86*sinhy(\x,\t)},{0.5*sinhy(\x,\t)-0.86*sinhx(\x,\t)});       \tikzmath{\x = 1;}        \draw[domain=0:2*pi,samples=200,   dashed, variable=\t]  plot ({0.5*sinhx(\x,\t)+0.86*sinhy(\x,\t)},{0.5*sinhy(\x,\t)-0.86*sinhx(\x,\t)});        \tikzmath{\x = 0.9;}        \filldraw[domain=0:2*pi,samples=200,  fill=cyan, variable=\t]  plot ({0.5*sinhx(\x,\t)+0.86*sinhy(\x,\t)},{0.5*sinhy(\x,\t)-0.86*sinhx(\x,\t)});        \node  at ( -0.1,0.75) {$1$};        \tikzmath{\x = 0.6;}        \filldraw[domain=0:2*pi,samples=200,  fill=blue, variable=\t]  plot ({\x*cos(deg(\t))},{\x*sin(deg(\t))});        \node [text=white]  at ( 0.3,0) {$2$};        \tikzmath{\x = 0.2;}        \filldraw[domain=0:2*pi,samples=200,  fill=red, variable=\t]  plot ({-0.1 +\x*cos(deg(\t))},{\x*sin(deg(\t))});        \node[text=white] at (-0.1,0) {$3$};         \tikzmath{\x = 0.8;}        \filldraw[domain=0:2*pi,samples=200,  fill=pink!30!white, variable=\t] plot ({-2+ exp(-cos(deg(\t)))*cos(deg(\t))},{-0.1+exp(-0.5*cos(deg(\t)))*sin(deg(\t))});       \node at (-3,0) {$4$};\end{tikzpicture}\caption{\label{fig:small-ext-pic}For the example shown in \figref{5inclusionstart},
this illustrates an extension across $\Gamma_{15}$, the dashed line.
The green area correspond $U_{1,5}^{t},$ with $t=\frac{1}{10}$.
In this example, the Dirichlet problem is $L\left[1,2,3,5,5\right]u=0$. }
\end{figure}
Consider the Dirichlet boundary value problem in $U^{t}$
\[
\begin{cases}
Lv=0 & \text{in \ensuremath{U^{t}}},\\
v=u^{t} & \text{on \ensuremath{\partial U^{t},}}
\end{cases}
\]
which is well-posed thanks to (\ref{eq:coerciveL}). We estimate
\begin{eqnarray*}
\left\langle L\left(u^{t}-v\right),u^{t}-v\right\rangle _{H^{-1}\left(U^{t}\right)\times H^{1}\left(U^{t}\right)} & = & \left\langle Lu^{t},u^{t}-v\right\rangle _{H^{-1}\left(U^{t}\right)\times H^{1}\left(U^{t}\right)}\\
 & \leq & \left|J_{0}\right|+\left|J_{1}\right|+\left|J_{2}\right|,
\end{eqnarray*}
where 
\[
J_{0}=\int_{Y}AD\left(u^{t}-u\right)\cdot D\left(u^{t}-v\right)+\left(u^{t}-u\right)\left(b+c\right)\cdot D\left(u^{t}-v\right)+\left(q+\translation\right)\left(u^{t}-u\right)\left(u^{t}-v\right)\text{d}x,
\]
\[
J_{1}=\int_{G}ADu^{t}\cdot D\left(u^{t}-v\right)+bu^{t}\cdot D\left(u^{t}-v\right)+c\cdot Du^{t}\left(u^{t}-v\right)+\left(q+\translation\right)u^{t}\left(u^{t}-v\right)\text{d}x,
\]
and 
\[
J_{2}=-\int_{\Gamma_{k}}\left(ADu^{t}+bu^{t}\right)\cdot n\left(u^{t}-v\right)dS.
\]
Thanks to the Hölder regularity of $u$ and $Du$, see \lemref{regula-global},
\[
\left|u^{t}\left(x\right)-u(x)\right|=\left|u\left(\psi_{k}^{-1}\left(t\psi_{k}\left(x\right)\right)\right)-u\left(\psi_{k}^{-1}\left(\psi_{k}\left(x\right)\right)\right)\right|\leq C\left|1-t\right|^{\alpha}\left\Vert u\right\Vert _{C^{0,\alpha}\left(\omega_{1}\right)},
\]
Similarly, 
\[
\left|Du^{t}\left(x\right)-Du(x)\right|\leq C\left|1-t\right|^{\alpha}\left\Vert Du\right\Vert _{C^{0,\alpha}\left(\omega_{1}\right)},
\]
 and altogether 
\[
\left|J_{0}\right|\leq C\left|1-t\right|^{\alpha}\left\Vert u\right\Vert _{C^{1,\alpha}\left(\omega_{1}\right)}\left\Vert v-u^{t}\right\Vert _{H^{1}\left(U^{t}\right)}.
\]
To estimate $J_{1}$, we write
\[
\left|J_{1}\right|\leq C\left\Vert u^{t}\right\Vert _{H^{1}\left(G\right)}\left\Vert v-u^{t}\right\Vert _{H^{1}\left(U^{t}\right)},
\]
and by interpolation, 
\[
\left\Vert u^{t}\right\Vert _{H^{1}\left(G\right)}\leq C\left(\left\Vert u\right\Vert _{L^{\infty}\left(\Omega_{1}\right)}+\left\Vert Du\right\Vert _{L^{\infty}\left(\Omega_{1}\right)}\right)\left|G\right|^{\frac{1}{2}}\leq C\left(\left\Vert u\right\Vert _{L^{\infty}\left(\Omega_{1}\right)}+\left\Vert Du\right\Vert _{L^{\infty}\left(\Omega_{1}\right)}\right)\left(1-t\right)^{\frac{1}{2}}.
\]
Thus altogether, writing $\beta=\min\left(\alpha,\frac{1}{2}\right)$.
\begin{equation}
\left|J_{0}\right|+\left|J_{1}\right|\leq C\left\Vert u\right\Vert _{C^{1,\alpha}\left(\Omega_{1}\right)}\left\Vert v-u^{t}\right\Vert _{H^{1}\left(U^{t}\right)}\left|1-t\right|^{\beta}.\label{eq:estJ0J1}
\end{equation}
Note that
\begin{eqnarray}
J_{2} & \leq & \left\Vert \left(ADu+bu\right)\cdot n\right\Vert _{L^{2}\left(\Gamma_{k}\right)}\left\Vert u^{t}-v\right\Vert _{L^{2}\left(\Gamma_{k}\right)}\nonumber \\
 & \leq & C\left(\left\Vert u\right\Vert _{L^{\infty}\left(\Omega_{1}\right)}+\left\Vert Du\right\Vert _{L^{\infty}\left(\Omega_{1}\right)}\right)\left\Vert u^{t}-v\right\Vert _{L^{2}\left(\Gamma_{k}\right)}.\label{eq:estJ2}
\end{eqnarray}
Note that for every $x\in\Gamma_{k}$, $\psi_{k}^{-1}\left(\frac{1}{t}\psi_{k}\left(x\right)\right)\in\partial U^{t}.$
Since $v=u^{t}$ on $\partial U^{t}$, we find on $\Gamma_{k}$
\begin{eqnarray*}
\left(u^{t}-v\right)\left(x\right) & = & \left(u^{t}-v\right)\left(\psi_{k}^{-1}\circ\psi_{k}\left(x\right)\right)-\left(u^{t}-v\right)\left(\psi_{k}^{-1}\left(\frac{1}{t}\psi_{k}\left(x\right)\right)\right)\\
 & = & \int_{\frac{1}{t}}^{1}D\left(\left(u^{t}-v\right)\circ\psi_{k}^{-1}\right)\left(\theta x\right)\cdot xd\theta,
\end{eqnarray*}
Applying Cauchy--Schwarz, we find
\[
\left|\left(u^{t}-v\right)\left(x\right)\right|\leq C\left|1-t\right|^{\frac{1}{2}}\left(\int_{\frac{1}{t}}^{1}\left|D\left(\left(u^{t}-v\right)\circ\psi_{k}^{-1}\right)\left(\theta x\right)\right|^{2}d\theta\right)^{\frac{1}{2}},
\]
and integrating over $\Gamma_{k}$
\begin{eqnarray}
\left\Vert u^{t}-v\right\Vert _{L^{2}\left(\Gamma_{k}\right)}^{2} & \leq & C\left|1-t\right|\int_{\Gamma_{k}}\int_{\frac{1}{t}}^{1}\left|D\left(\left(u^{t}-v\right)\circ\psi_{k}^{-1}\right)\left(\theta x\right)\right|^{2}d\theta dx\nonumber \\
 & \leq & C\left|1-t\right|\left\Vert u^{t}-v\right\Vert _{H^{1}\left(G\right)}^{2}.\label{eq:estgammas}
\end{eqnarray}
In turn, combining (\ref{eq:estJ0J1}), (\ref{eq:estJ2}) and (\ref{eq:estgammas}),
\[
\left|\left\langle L\left(u^{t}-v\right),u^{t}-v\right\rangle \right|\leq C\left|1-t\right|^{\beta}\left\Vert u\right\Vert _{C^{1,\alpha}\left(\Omega_{1}\right)}\left\Vert u^{t}-v\right\Vert _{H^{1}\left(U^{t}\right)}.
\]
 Thanks to (\ref{eq:coerciveL}), this implies 
\[
\left\Vert u^{t}-v\right\Vert _{H^{1}\left(U^{t}\right)}\leq C\left|1-t\right|^{\beta}\left\Vert u\right\Vert _{C^{1,\alpha}\left(\Omega_{1}\right)}.
\]
 For every fixed $t$ consider the following system
\begin{equation}
\begin{cases}
L_{k}S=0 & \text{in \ensuremath{\Omega\setminus\psi_{k}^{-1}\left(\partial B_{\frac{1}{t}\left(1-\eta\right)}\right)}}\\
S=0 & \text{on \ensuremath{\partial\Omega}}\\
\left[S\right]=u-v & \text{on \ensuremath{\psi_{k}^{-1}\left(\partial B_{\frac{1}{t}\left(1-\eta\right)}\right)}}\\
\left[\left(ADS+bS\right)\cdot n\right]=\left(ADu+bu\right)\cdot n-\left(ADv+bv\right)\cdot n & \text{on \ensuremath{\psi_{k}^{-1}\left(\partial B_{\frac{1}{t}\left(1-\eta\right)}\right)}},
\end{cases}\label{eq:jumps}
\end{equation}
Where $\left[\cdot\right]$ denotes the jump across the boundary,
thanks to \lemref{translation_to_get_well-posedness}, this problem
is well posed, and there exists some $S\in H^{1}\left(\Omega\setminus\psi_{k}^{-1}\left(\partial B_{\frac{1}{\lambda}\left(1-\eta\right)}\right)\right)$
solution of \eqref{jumps}. Moreover there holds:
\begin{eqnarray*}
\left\Vert S\right\Vert _{H^{1}\left(\Omega\setminus\psi_{k}^{-1}\left(\partial B_{\frac{1}{\lambda}\left(1-\eta\right)}\right)\right)} & \leq & C\left(\left\Vert u-v\right\Vert _{H^{1/2}\left(\ensuremath{\psi_{k}^{-1}\left(\partial B_{\frac{1}{\lambda}\left(1-\eta\right)}\right)}\right)}\right.\\
 &  & \left.+\left\Vert \left(AD\left(u-v\right)+b\left(u-v\right)\right)\cdot n\right\Vert _{H^{-1/2}\left(\ensuremath{\psi_{k}^{-1}\left(\partial B_{\frac{1}{\lambda}\left(1-\eta\right)}\right)}\right)}\right)\\
 & \leq & C\left\Vert u-v\right\Vert _{H^{1}\left(Y\right)}.
\end{eqnarray*}
Using the triangle inequality, this yields,
\begin{eqnarray*}
\left\Vert S\right\Vert _{H^{1}\left(\Omega\setminus\psi_{k}^{-1}\left(\partial B_{\frac{1}{\lambda}\left(1-\eta\right)}\right)\right)} & \leq & C\left(\left\Vert u^{t}-u\right\Vert _{H^{1}\left(Y\right)}+\left\Vert u^{t}-v\right\Vert _{H^{1}\left(Y\right)}\right)\\
 & \leq & C\left(1-t\right)^{\beta}\left\Vert u\right\Vert _{C^{1,\alpha}\left(\Omega_{1}\right)}.
\end{eqnarray*}
Take $\tilde{v}_{t}=v\one_{U^{t}}+\one_{\left(W_{k}\setminus\Omega_{1}\right)\cup\psi_{k}^{-1}\left(B_{\frac{1}{t}\left(1-\eta\right)}\right)}u+S\one_{\Omega\setminus\psi_{k}^{-1}\left(\partial B_{\frac{1}{t}\left(1-\eta\right)}\right)}$.
By construction, we have $\tilde{v}_{t}\in H^{1}\left(W_{k}\cup\Gamma_{k}\cup U^{t}\right)$
and there holds $\left\Vert \tilde{v}_{t}-u\right\Vert _{H^{1}\left(W_{k}\right)}\leq C\left(1-t\right)^{\beta}\left\Vert u\right\Vert _{C^{1,\alpha}\left(\Omega_{1}\right)}$.
The conclusion follows choosing $t$ close enough to $1$, and $U=W_{k}\cup\Gamma_{k}\cup U^{t}$.
\end{proof}
The third step is to extend the solution to the whole $\Omega$. 
\begin{lem}
\label{lem:extended_Runge} With the notations of (\ref{lem:SmallExtension}),
for any $\epsilon>0$, there exists a weak solution of $v\in H^{1}\left(\Omega\right)$
of $L_{k}v=0\text{ in }\Omega$ such that $\left\Vert v-u\right\Vert _{H^{1}\left(U\right)}<\epsilon$.
\end{lem}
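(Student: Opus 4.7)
This is a Runge-type approximation result, and my plan is to prove it by the standard Hahn--Banach / duality argument of Lax--Malgrange, adapted to the piecewise-regular, divergence-form setting. The key ingredients I would invoke are: well-posedness of the Dirichlet problem on $\Omega$ for both $L_k$ and its formal adjoint $L_k^*$ (from \lemref{translation_to_get_well-posedness}, since the adjoint satisfies the same piecewise-H\"older coefficient and ellipticity assumptions), and the classical interior/boundary unique continuation property available inside each subdomain where the coefficients are $C^{0,\alpha}$ (Lipschitz when $d \geq 3$).

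By Hahn--Banach, it suffices to check that any $\ell \in (H^1(U))'$ annihilating $S_0 := \{v|_U : v \in H^1(\Omega),\ L_k v = 0 \text{ in } \Omega\}$ also annihilates $u$. I would represent $\ell$ by a pair $(f_0, f_1) \in L^2(U) \times L^2(U;\R^d)$ and solve the adjoint Dirichlet problem $L_k^* w = -\div(f_1 \one_U) + f_0 \one_U$ in $\Omega$ with $w = 0$ on $\partial \Omega$, obtaining $w \in H_0^1(\Omega)$ satisfying $L_k^* w = 0$ on $\Omega \setminus \overline{U}$. A Green's identity applied to any $v \in S_0$ and this $w$ (using $L_k v = 0$ and $w|_{\partial \Omega} = 0$) gives $\ell(v) = \int_{\partial\Omega} v\,(A Dw)\cdot n\, dS$; since traces of elements of $S_0$ sweep out all of $H^{1/2}(\partial\Omega)$ by well-posedness of the direct problem, the hypothesis $\ell|_{S_0} = 0$ forces $(A Dw)\cdot n = 0$ on $\partial\Omega$ in $H^{-1/2}$.

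Next I would propagate $w \equiv 0$ from $\partial\Omega$ into $\Omega \setminus \overline{U}$ by unique continuation. Starting from the vanishing Cauchy data on the $C^2$ boundary $\partial\Omega$, classical boundary UCP gives $w \equiv 0$ in a neighbourhood of $\partial\Omega$ inside the subdomain containing it. To cross each interface $\Gamma_{ij}$, I would use the intrinsic continuity of both $w$ and the conormal flux $(A Dw + cw)\cdot n$ across $\Gamma_{ij}$ built into the weak formulation of $L_k^* w = 0$: vanishing of $w$ on one side transfers to vanishing Cauchy data on the $C^2$ interface from the other side, after which interior UCP in the neighbouring subdomain applies. The specific geometry of $U = W_k \cup \Gamma_k \cup U^t$ from \lemref{SmallExtension}, where $U^t$ is an annular thickening of $\Gamma_k$, ensures that $\Omega \setminus \overline{U}$ has no compact connected component, so this iterative propagation exhausts $\Omega \setminus \overline{U}$ and yields $w \equiv 0$ there.

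With $w$ vanishing outside $\overline{U}$, its extension by zero lies in $H_0^1(U)$, and testing the equation $L_k u = 0$ in $U$ against it gives $\ell(u) = \int_U (f_0 u + f_1 \cdot Du)\, dx = 0$, closing the Hahn--Banach argument. The delicate step is the third one: propagating unique continuation across the internal interfaces $\Gamma_{ij}$, where the adjoint coefficients jump. This is exactly where the divergence-form structure of $L_k^*$ is essential, since it is what automatically supplies the continuity of $w$ and the conormal flux across each $\Gamma_{ij}$, converting vanishing on one side into vanishing Cauchy data on the other.
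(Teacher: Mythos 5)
Your overall route (Hahn--Banach duality, adjoint solution $w$, unique continuation to force $w\equiv0$ outside $\overline{U}$) is genuinely different from the paper's, but it has a concrete gap at its decisive step: the claim that the geometry of $U=W_{k}\cup\Gamma_{k}\cup U^{t}$ ensures that $\Omega\setminus\overline{U}$ has no connected component compactly contained in $\Omega$ is false, and in fact it fails at essentially every step of the construction. By assumption~\ref{assu:Piecewise_assumption_Domain} every $\Omega_{1},\dots,\Omega_{N}$ is at positive distance from $\partial\Omega$; so whenever the newly attached subdomain $\Omega_{\consi(k)}$ is one of these interior inclusions (e.g.\ step $k=2$ of the map $\consi_{1}$ in \figref{5inclusionstart}, where $W_{2}=\Omega\setminus\overline{\Omega_{3}}$ and $U^{t}$ is only a thin collar poking into $\Omega_{3}$), the set $\Omega\setminus\overline{U}$ is essentially the bulk of $\Omega_{3}$: a relatively compact ``hole'' whose boundary is the inner collar surface, touching neither $\partial\Omega$ nor any set where you have information on $w$. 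Your propagation of $w\equiv0$, which must start from the vanishing Cauchy data on $\partial\Omega$, therefore never reaches this component, and you cannot conclude $w\equiv0$ on $\Omega\setminus\overline{U}$ nor $\ell(u)=0$. This is not a cosmetic issue: when the complement of $\overline{U}$ has a compact component, the annihilator of $\{v|_{U}:L_{k}v=0\text{ in }\Omega\}$ contains nontrivial functionals built from adjoint solutions living in that hole (the classical flux-period obstruction to Runge approximation), so any correct argument must either verify compatibility conditions for the specific $u$ or avoid the complement of $U$ altogether. A secondary remark: the interface-crossing machinery you emphasize is not where the difficulty lies, since $\Omega\setminus\overline{U}\subset V_{k}=\Omega\setminus\overline{W_{k}}$, where by the construction-map structure the coefficients of $L_{k}$ (hence of $L_{k}^{*}$) do not jump at all.

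The paper's proof takes the second way out and never performs unique continuation on $\Omega\setminus\overline{U}$. It exploits the fact that $L_{k}$ has continuous (H\"older, Lipschitz principal part if $d\geq3$) coefficients on all of $V_{k}$ to produce solutions $u_{n}$ of $L_{k}u_{n}=0$ on $V_{k}$ approximating $u$ on $U\cap V_{k}$, and then, instead of a duality argument, restores a globally defined solution by a transmission solve: it measures the (small) jumps of $u-u_{n}$ and of the conormal fluxes $\mathcal{F}_{1}(u)-\mathcal{F}_{2}(u_{n})$ across $\partial W_{k}$, solves the jump problem \eqref{eq:JumpSn} for a corrector $s_{n}$ with $\Vert s_{n}\Vert_{H^{1}}\leq C/n$ (this is where \lemref{translation_to_get_well-posedness} is used), and glues $v_{n}=u\one_{W_{k}}+u_{n}\one_{V_{k}}+s_{n}$ into a global weak solution close to $u$ on $U$. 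If you want to salvage a duality-style proof you would have to show that the specific $u$ produced by \lemref{SmallExtension} satisfies the finitely many orthogonality conditions attached to the compact components of $\Omega\setminus\overline{U}$; as written, your argument does not address this and would fail exactly there.
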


\begin{proof}
Note that on $V_{k}=\Omega\setminus\overline{W_{k}}$ the coefficients
of $L_{k}$ are not discontinuous, and the Unique Continuation Property
holds. As a result there exists a sequence of functions $\left(u_{n}\right)_{n\in\mathbb{N}}\in H^{1}\left(V_{k}\right)^{\mathbb{N}}$
such that 
\[
L_{k}u_{n}=0\text{ on \ensuremath{V_{k}}}
\]

and

\[
\left\Vert u_{n}-u\right\Vert _{H^{1}\left(U\cap V_{k}\right)}\leq\frac{1}{n}.
\]

which implies that 
\[
\left\Vert u_{n}-u\right\Vert _{H^{1/2}\left(\partial W_{k}\right)}\leq\left\Vert u_{n}-u\right\Vert _{H^{1/2}\left(\partial\left(U\cap V_{k}\right)\right)}\leq\left\Vert u_{n}-u\right\Vert _{H^{1}\left(U\cap V_{k}\right)}\leq\frac{C}{n}.
\]
Let $\nu$ be the outer normal vector of $\partial W_{k}$ and let
\begin{eqnarray*}
\mathcal{F}_{1}:H^{1}\left(W_{k}\right) & \rightarrow & H^{-1/2}\left(\partial W_{k}\right)\\
u & \mapsto & \left(\tilde{A}|_{W_{k}}Du+\tilde{b}|_{W_{k}}u\right)\cdot\nu
\end{eqnarray*}

and 
\begin{eqnarray*}
\mathcal{F}_{2}:H^{1}\left(U\setminus W_{k}\right) & \rightarrow & H^{-1/2}\left(\partial W_{k}\right)\\
u & \mapsto & \left(\tilde{A}|_{V_{k}}Du+\tilde{b}|_{V_{k}}u\right)\cdot\nu
\end{eqnarray*}
Since $u\in H^{1}\left(U\right)$ is a weak solution of $L_{k}u=0$
in $U$, there holds $\mathcal{F}_{1}\left(u\right)=\mathcal{F}_{2}\left(u\right)$
on $\partial\Omega_{I_{k}}$. As a result,
\begin{eqnarray*}
\left\Vert \mathcal{F}_{1}\left(u\right)-\mathcal{F}_{2}\left(u_{n}\right)\right\Vert _{H^{-1/2}\left(\partial W_{k}\right)} & \leq & \left\Vert \mathcal{F}_{2}\left(u\right)-\mathcal{F}_{2}\left(u_{n}\right)\right\Vert _{H^{-1/2}\left(\partial W_{k}\right)}\\
 & \leq & \left\Vert u_{n}-u\right\Vert _{H^{1}\left(U\cap V_{k}\right)}\\
 & \leq & \frac{C}{n}.
\end{eqnarray*}
Consider the following system in $\Omega$
\begin{equation}
\begin{cases}
L_{k}s_{n}=0 & \text{in \ensuremath{\Omega\setminus\partial W_{k}}}\\
s_{n}=0 & \text{on \ensuremath{\partial\Omega}}\\
\left[s_{n}\right]=u-u_{n} & \text{on \ensuremath{\partial}\ensuremath{\ensuremath{W_{k}}}}\\
\left[\left(ADs_{n}+bs_{n}\right)\cdot\nu\right]=\mathcal{F}_{1}\left(u\right)-\mathcal{F}_{2}\left(u_{n}\right) & \text{on }\partial W_{k}.
\end{cases}\label{eq:JumpSn}
\end{equation}
 \lemref{translation_to_get_well-posedness} implies that there exists
$s_{n}\in H^{1}\left(\Omega\setminus\partial\ensuremath{W_{k}}\right)$,
a weak solution of \eqref{JumpSn} and there holds
\begin{equation}
\left\Vert s_{n}\right\Vert _{H^{1}\left(\Omega\setminus\partial W_{k}\right)}\leq C\left(\left\Vert \mathcal{F}_{1}\left(u\right)-\mathcal{F}_{2}\left(u_{n}\right)\right\Vert _{H^{-1/2}\left(\partial W_{k}\right)}+\left\Vert u-u_{n}\right\Vert _{H^{1/2}\left(\partial W_{k}\right)}\right)\leq\frac{C}{n}.\label{eq:estimsn}
\end{equation}
Let $v_{n}=s_{n}\mathbf{1}_{\Omega\setminus\partial W_{k}}+u\mathbf{1}_{U}+\mathbf{1}_{V_{k}}u_{n}$.
By construction, $v_{n}\in H^{1}\left(\Omega\right)$ is a weak solution
of \eqref{PDE}. Moreover, we have 
\[
\left\Vert v_{n}-u\right\Vert _{H^{1}\left(U\right)}\leq\left\Vert s_{n}\right\Vert _{H^{1}\left(\Omega\setminus\partial W_{k}\right)}+\left\Vert u_{n}-u\right\Vert _{H^{1}\left(U\setminus W_{k}\right)}\leq\frac{C}{n},
\]
and the conclusion follows.
\end{proof}
We now turn to the proof of \thmref{piecewise_regular-construction}.
\begin{proof}[Proof of \thmref{piecewise_regular-construction}]
\begin{figure}[H]
\noindent \centering{}\tikzset{declare function={sinhx(\x,\y)  =                sinh(\x*cos(deg(\y)))*cos(deg(\x*sin(deg(\y));}} \tikzset{declare function={sinhy(\x,\y)  =                cosh(\x*cos(deg(\y)))*sin(deg(\x*sin(deg(\y));}} \makeatletter \tikzset{   prefix node name/.code={     \tikzset{       name/.code={\edef\tikz@fig@name{#1 ##1}}     }   } } \makeatother
       \begin{tikzpicture}[scale=0.55, every node/.style={scale=0.55}]        \begin{scope}[shift={(-9cm,5cm)}, prefix node name=TW]          \node (east) at ( 1.3, 0) {};         \node (west) at (-5.6, 0) {};         \node (north) at (0, 1.6) {};         \node (south) at (0,-1.7) {};         \node (ts) at (-4.6,-1.7) {$(2,2,2,2,2)$};         \node (tn) at (-4.6, 1.6) {};         \filldraw[domain=0:2*pi,samples=200,  fill=blue, variable=\t]  plot ({-0.6+3*exp(-0.5*cos(deg(\t)))*cos(deg(\t))},{-0.05+1.6*sin(deg(\t))});         \node[text=white]  at ( -1.4,0.0) {$5$};               \tikzmath{\x = 1;}         \filldraw[domain=0:2*pi,samples=200,  fill=blue, variable=\t]  plot ({0.5*sinhx(\x,\t)+0.86*sinhy(\x,\t)},{0.5*sinhy(\x,\t)-0.86*sinhx(\x,\t)});         \node[text=white]  at ( 0.8,0) {$1$};         \tikzmath{\x = 0.6;}         \filldraw[domain=0:2*pi,samples=200,  fill=blue, variable=\t]  plot ({\x*cos(deg(\t))},{\x*sin(deg(\t))});         \node[text=white]  at ( 0.3,0) {$2$};         \tikzmath{\x = 0.2;}         \filldraw[domain=0:2*pi,samples=200,  fill=blue, variable=\t]  plot ({-0.1 +\x*cos(deg(\t))},{\x*sin(deg(\t))});         \node[text=white] at (-0.1,0) {$3$};         \tikzmath{\x = 0.8;}         \filldraw[domain=0:2*pi,samples=200,  fill=blue, variable=\t] plot ({-2+ exp(-cos(deg(\t)))*cos(deg(\t))},{-0.1+exp(-0.5*cos(deg(\t)))*sin(deg(\t))});         \node[text=white]  at (-3,0) {$4$};        \end{scope}       \begin{scope}[shift={(-9cm,0.1cm)}, prefix node name=BW]          \node (east) at ( 1.3, 0) {};         \node (west) at (-5.6, 0) {};         \node (north) at (0, 1.6) {};         \node (south) at (0,-1.7) {};         \node (ts) at (-4.6,-1.6) {$(2,2,3,2,2)$};         \node (tn) at (-4.6, 1.6) {};         \filldraw[domain=0:2*pi,samples=200,  fill=blue, variable=\t]  plot ({-0.6+3*exp(-0.5*cos(deg(\t)))*cos(deg(\t))},{-0.05+1.6*sin(deg(\t))});         \node[text=white]   at ( -1.4,0.0) {$5$};               \tikzmath{\x = 1;}         \filldraw[domain=0:2*pi,samples=200,  fill=blue, variable=\t]  plot ({0.5*sinhx(\x,\t)+0.86*sinhy(\x,\t)},{0.5*sinhy(\x,\t)-0.86*sinhx(\x,\t)});         \node[text=white]   at ( 0.8,0) {$1$};         \tikzmath{\x = 0.6;}         \filldraw[domain=0:2*pi,samples=200,  fill=blue, variable=\t]  plot ({\x*cos(deg(\t))},{\x*sin(deg(\t))});         \node[text=white]  at ( 0.3,0) {$2$};         \tikzmath{\x = 0.2;}         \filldraw[domain=0:2*pi,samples=200,  fill=red, variable=\t]  plot ({-0.1 +\x*cos(deg(\t))},{\x*sin(deg(\t))});         \node[text=white]  at (-0.1,0) {$3$};         \tikzmath{\x = 0.8;}         \filldraw[domain=0:2*pi,samples=200,  fill=blue, variable=\t] plot ({-2+ exp(-cos(deg(\t)))*cos(deg(\t))},{-0.1+exp(-0.5*cos(deg(\t)))*sin(deg(\t))});         \node[text=white]  at (-3,0) {$4$};      \end{scope}      \begin{scope}[shift={(0cm,0.1cm)}, prefix node name=BG]          \node (east) at ( 1.3, 0) {};         \node (west) at (-5.6, 0) {};         \node (north) at (0, 1.6) {};         \node (south) at (0,-1.7) {};         \node (ts) at (-4.6,-1.6) {$(1,2,3,1,1)$};         \node (tn) at (-4.6, 1.6) {};         \filldraw[domain=0:2*pi,samples=200,  fill=cyan, variable=\t]  plot ({-0.6+3*exp(-0.5*cos(deg(\t)))*cos(deg(\t))},{-0.05+1.6*sin(deg(\t))});         \node[text=black]  at ( -1.4,0.0) {$5$};               \tikzmath{\x = 1;}         \filldraw[domain=0:2*pi,samples=200,  fill=cyan, variable=\t]  plot ({0.5*sinhx(\x,\t)+0.86*sinhy(\x,\t)},{0.5*sinhy(\x,\t)-0.86*sinhx(\x,\t)});         \node[text=black]  at ( 0.8,0) {$1$};         \tikzmath{\x = 0.6;}         \filldraw[domain=0:2*pi,samples=200,  fill=blue, variable=\t]  plot ({\x*cos(deg(\t))},{\x*sin(deg(\t))});         \node[text=white]  at ( 0.3,0) {$2$};         \tikzmath{\x = 0.2;}         \filldraw[domain=0:2*pi,samples=200,  fill=red , variable=\t]  plot ({-0.1 +\x*cos(deg(\t))},{\x*sin(deg(\t))});         \node[text=white]  at (-0.1,0) {$3$};         \tikzmath{\x = 0.8;}         \filldraw[domain=0:2*pi,samples=200,  fill=cyan, variable=\t] plot ({-2+ exp(-cos(deg(\t)))*cos(deg(\t))},{-0.1+exp(-0.5*cos(deg(\t)))*sin(deg(\t))});         \node[text=black] at (-3,0) {$4$};      \end{scope}      \begin{scope}[shift={(9cm,0.1cm)}, prefix node name=BE]          \node (east) at ( 1.3, 0) {};         \node (west) at (-5.6, 0) {};         \node (north) at (0, 1.6) {};         \node (south) at (0,-1.7) {};         \node (ts) at (-4.6,-1.6) {$(1,2,3,5,5)$};         \node (tn) at (-4.6, 1.6) {};         \filldraw[domain=0:2*pi,samples=200,  fill=pink!30!white, variable=\t]  plot ({-0.6+3*exp(-0.5*cos(deg(\t)))*cos(deg(\t))},{-0.05+1.6*sin(deg(\t))});         \node[text=black]  at ( -1.4,0.0) {$5$};               \tikzmath{\x = 1;}         \filldraw[domain=0:2*pi,samples=200,  fill=cyan, variable=\t]  plot ({0.5*sinhx(\x,\t)+0.86*sinhy(\x,\t)},{0.5*sinhy(\x,\t)-0.86*sinhx(\x,\t)});         \node[text=black]  at ( 0.8,0) {$1$};         \tikzmath{\x = 0.6;}         \filldraw[domain=0:2*pi,samples=200,  fill=blue, variable=\t]  plot ({\x*cos(deg(\t))},{\x*sin(deg(\t))});         \node[text=white]  at ( 0.3,0) {$2$};         \tikzmath{\x = 0.2;}         \filldraw[domain=0:2*pi,samples=200,  fill=red, variable=\t]  plot ({-0.1 +\x*cos(deg(\t))},{\x*sin(deg(\t))});         \node[text=white] at (-0.1,0) {$3$};         \tikzmath{\x = 0.8;}         \filldraw[domain=0:2*pi,samples=200,  fill=pink!30!white, variable=\t] plot ({-2+ exp(-cos(deg(\t)))*cos(deg(\t))},{-0.1+exp(-0.5*cos(deg(\t)))*sin(deg(\t))});         \node[text=black] at (-3,0) {$4$};      \end{scope}      \begin{scope}[shift={(9cm,5cm)}, prefix node name=TE]          \node (east) at ( 1.3, 0) {};         \node (west) at (-5.6, 0) {};         \node (north) at (0, 1.6) {};         \node (south) at (0,-1.7) {};         \node (ts) at (-4.6,-1.6) {$(1,2,3,4,5)$};         \node (tn) at (-4.6, 1.6) {};         \filldraw[domain=0:2*pi,samples=200,  fill=pink!30!white, variable=\t]  plot ({-0.6+3*exp(-0.5*cos(deg(\t)))*cos(deg(\t))},{-0.05+1.6*sin(deg(\t))});         \node[text=black]  at ( -1.4,0.0) {$5$};               \tikzmath{\x = 1;}         \filldraw[domain=0:2*pi,samples=200,  fill=cyan, variable=\t]  plot ({0.5*sinhx(\x,\t)+0.86*sinhy(\x,\t)},{0.5*sinhy(\x,\t)-0.86*sinhx(\x,\t)});         \node[text=black]  at ( 0.8,0) {$1$};         \tikzmath{\x = 0.6;}         \filldraw[domain=0:2*pi,samples=200,  fill=blue, variable=\t]  plot ({\x*cos(deg(\t))},{\x*sin(deg(\t))});         \node[text=white]  at ( 0.3,0) {$2$};         \tikzmath{\x = 0.2;}         \filldraw[domain=0:2*pi,samples=200,  fill=red, variable=\t]  plot ({-0.1 +\x*cos(deg(\t))},{\x*sin(deg(\t))});         \node[text=white] at (-0.1,0) {$3$};         \tikzmath{\x = 0.8;}         \filldraw[domain=0:2*pi,samples=200,  fill=green, variable=\t] plot ({-2+ exp(-cos(deg(\t)))*cos(deg(\t))},{-0.1+exp(-0.5*cos(deg(\t)))*sin(deg(\t))});         \node[text=black] at (-3,0) {$4$};      \end{scope}        \draw[thick,->] (TW south) -- node[right,midway] {} (BW north) ;        \draw[thick,->] (BW east)  -- node[above,midway] {} (BG west)  ;        \draw[thick,->] (BG east)  -- node[above,midway] {} (BE west)  ;        \draw[thick,->] (BE north) -- node[midway,left ] {} (TE south) ;     \end{tikzpicture} \caption{A construction following the construction map $\protect\consi:\left\{ 1,2,3,4,5\right\} \rightarrow\left\{ 2,3,1,5,4\right\} $.
Every colour represents one set of regular coefficients. At each step,
all the subdomains within which the construction has not been performed
have the same parameters as the subdomain where the solution is constructed. }
\end{figure}

 Given $\sigma>0$ and $x\in\Omega\setminus\cup_{i\neq j}\Gamma_{ij}$,
we choose a construction map $\consi\in S_{N+1}$ such that the starting
point $x$ is in the first set, $x\in\Omega_{I_{1}}$. Using \lemref{localcase}
for the first step, and then applying \lemref{SmallExtension} and
\lemref{extended_Runge} inductively, with 
\begin{eqnarray*}
L_{1} & = & L\left[\mathbf{j_{i}}^{1}\right]\\
 & \vdots\\
L_{N+1} & = & L\left[\mathbf{j_{i}}^{N+1}\right]=L,
\end{eqnarray*}
the conclusion follows.
\end{proof}
We now turn to original operator (which is represented by $L_{\text{original}}=L-\translation$),
to prove \thmref{piecewise_regular-construction}.
\begin{proof}[Proof of \thmref{piecewise_regular-construction} for $L_{\text{original}}$]
 Thanks to \thmref{piecewise_regular-construction} for $L=L_{\text{original}}+\translation$
there holds
\begin{claim}
\label{claim:caseL}For any $\sigma>0$, there exists $\epsilon>0$
such that for any $x\in\Omega\setminus\cup_{i\neq j}\Gamma_{ij}$
there exists $d+1$ solutions denoted as $u_{1}^{x},u_{2}^{x},\cdots,u_{d+1}^{x}$
such that $u_{i}^{x}\in H^{1}\left(\Omega\right)$ and $Lu_{i}^{x}=0$
in $\Omega$ for $i\in\left\{ 1,2,\cdots,d+1\right\} $, and 
\[
\left|\det\JacJ\left(u_{1}^{x},u_{2}^{x},\cdots,u_{d+1}^{x}\right)\right|\left(y\right)>\sigma,\text{ for any }y\in B\left(x,\epsilon\right)\cap\Omega_{j},\,j\in\left\{ 1,\ldots,N+1\right\} .
\]
\end{claim}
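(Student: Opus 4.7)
My proposal is to read the claim as an immediate restatement of \thmref{piecewise_regular-construction} applied to the shifted operator $L = L_{\text{original}} + \translation$ for a fixed $\translation \in (0, \vartheta)$ (the well-posedness of the auxiliary Dirichlet problems being granted by \lemref{translation_to_get_well-posedness}). The plan takes two lines. First, I would invoke the theorem just established in this section to obtain, for given $\sigma > 0$, some $\epsilon > 0$ and solutions $u_{1}^{x}, \ldots, u_{d+1}^{x} \in H^{1}(\Omega)$ of $L u_{i}^{x} = 0$ in $\Omega$ satisfying
\[
\det \JacJ(u_{1}^{x}, \ldots, u_{d+1}^{x})(y) > \sigma
\]
on each slice $B(x, \epsilon) \cap \Omega_{j}$, $j \in \{1, \ldots, N+1\}$. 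Second, I would observe that a real number exceeding $\sigma > 0$ automatically has absolute value exceeding $\sigma$, so $\bigl|\det \JacJ(u_{1}^{x}, \ldots, u_{d+1}^{x})(y)\bigr| > \sigma$ on the same set, which is exactly the claimed bound.

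There is genuinely no obstacle and no real choice to make: the radius $\epsilon = \epsilon(\sigma)$ and the dependence on $\lambda$, $d$, $\translation$ are all inherited from \thmref{piecewise_regular-construction}, and the piecewise ordering on the subdomains is the one built from any convenient construction map $\consi$ with $\consi(1)$ equal to the index of the subdomain containing $x$, exactly as in the proof above. The one deliberate change from the theorem to the claim is the use of $|\det \JacJ| > \sigma$ in place of $\det \JacJ > \sigma$; this weakening is not needed here but will matter in the next step of the proof of \thmref{piecewise_regular-construction} for $L_{\text{original}}$, where the solutions $u_{i}^{x}$ will be corrected into solutions of $L_{\text{original}} v_{i}^{x} = 0$ by a perturbation of size controlled by $\translation$, a correction that can in principle flip the sign of the determinant. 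Only the modulus $|\det \JacJ|$ is the robust invariant to propagate through that perturbation step, which is why packaging the bound in unsigned form is convenient even though at this intermediate stage the sign is in fact known.
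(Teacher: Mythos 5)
Your proposal is correct and matches the paper exactly: the paper introduces this claim with the single phrase ``Thanks to \thmref{piecewise_regular-construction} for $L=L_{\text{original}}+\translation$ there holds\ldots'', i.e.\ it is read off directly from the theorem just proved for the shifted operator, with $\left|\det\JacJ\right|>\sigma$ following trivially from $\det\JacJ>\sigma$. Your closing remark about why the unsigned form is the right invariant to carry into the subsequent perturbation step is also consistent with how the paper uses the claim.
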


If the Dirichlet boundary value problem associated with $L_{\text{original}}$
is well-posed, then for any $i\in\left\{ 1,\ldots,d+1\right\} $ and
any $x\in\Omega\setminus\cup_{i\neq j}\Gamma_{ij}$, consider the
following Dirichlet boundary value problem:
\[
\begin{cases}
L_{\text{original}}v_{i}^{x}=0 & \text{in }\Omega\\
v_{i}^{x}=u_{i}^{x} & \text{on \ensuremath{\partial\Omega}.}
\end{cases}
\]
Then, $v_{i}^{x}-u_{i}^{x}\in H_{0}^{1}\left(\Omega\right)$ satisfies
$L_{\text{original}}\left(v_{i}^{x}-u_{i}^{x}\right)=\translation u_{i}^{x}$
in $\Omega$, and thanks to the well-posedness of $L_{\text{original}}$,
we have 
\[
\left\Vert v_{i}^{x}-u_{i}^{x}\right\Vert _{H_{0}^{1}\left(\Omega\right)}\leq C\translation,
\]
where the finite constant $C$ is independent of $x$. Thanks to the
regularity of $L_{\text{original}}$ in $\Omega_{j}$, we have 
\[
\left\Vert v_{i}^{x}-u_{i}^{x}\right\Vert _{C^{1,\alpha}\left(\Omega_{j}\right)}\leq C\translation.
\]
Take $\translation$ small enough (since $\translation\in\left(0,\vartheta\right)$
is chosen arbitrarily ) and take a corresponding $\epsilon$ given
in claim~\claimref{caseL} for $L$, thanks to the multi-linearity
of $\text{det }\JacJ$, we conclude that 
\[
\left|\det\JacJ\left(v_{1}^{x},u_{2}^{x},\cdots,v_{d+1}^{x}\right)\right|\left(y\right)>\sigma
\]
for any $y\in B\left(x,\epsilon\right)\cap\Omega_{j}$, $j\in\left\{ 1,\ldots,N+1\right\} $.

If the Dirichlet boundary value problem associated with $L_{\text{original}}$
is not well-posed, the kernel of the solution map, written $\ker\left(L_{\text{original}}\right)$
to avoid introducing additional notations, is finite dimensional,
and not empty. For any $x\in\Omega\setminus\cup_{i\neq j}\Gamma_{ij}$
and any $i\in\left\{ 1,\ldots,N+1\right\} $, take 
\[
u_{i}^{x}=u_{1}+u_{2}
\]

where $u_{1}\in\ker\left(L_{\text{original}}\right)\subset H_{0}^{1}\left(\Omega\right)\subset L^{2}\left(\Omega\right),u_{2}\in\ker\left(L_{\text{original}}\right)^{\bot}\subset L^{2}\left(\Omega\right)$.
By the Fredholm alternative, there exists a unique $v_{2}\in H^{1}\left(\Omega\right)$
such that $v_{2}-u_{2}\in H_{0}^{1}\left(\Omega\right)\cap\ker\left(L_{\text{original}}\right)^{\bot}$
satisfies

\[
\begin{cases}
L_{\text{original}}\left(v_{2}-u_{2}\right)=-L_{\text{original}}u_{2}=\translation u & \text{in \ensuremath{\Omega}},\\
v_{2}-u_{2}=0 & \text{on \ensuremath{\partial\Omega}.}
\end{cases}
\]

Furthermore, $\left\Vert v_{2}-u_{2}\right\Vert _{H_{0}^{1}\left(\Omega\right)}\leq C\translation$.
Choose $v_{i}^{x}=u_{1}+v_{2},$ which satisfies$\left\Vert v_{i}^{x}-u_{i}^{x}\right\Vert _{H^{1}\left(\Omega\right)}\leq C\translation.$
Taking $\translation$ small enough, thanks to the regularity of the
coefficients in each subdomain and the multi-linearity of $\det$
$\JacJ$, we conclude that 
\[
\left|\det\JacJ\left(v_{1}^{x},u_{2}^{x},\cdots,v_{d+1}^{x}\right)\right|\left(y\right)>\sigma.
\]
\end{proof}

\section{\label{sec:Whitney}Proof Of Proposition~\ref{pro:MNT-Light}}

We recall the definition of the geometric complement of an open set
$\Omega\subset\mathbf{R}^{d}$, which is the smallest open set $\Pi\subset\mathbf{R}^{d}$
such that $\Omega\subset\Pi$ and the genus of $\Pi$ equals to zero.
\begin{defn}
\label{def:PiecesContainedIn}Given any open set $U\subset\Omega$,
we write $g_{U}=\#\left\{ j\in\left\{ 1,\cdots,N+1\right\} :\Omega_{j}\subset U\right\} $
which is the number of pieces contained in $U$. By construction,
we have $g_{\Omega}=N+1$.
\end{defn}

\begin{lem}
\label{lem:UnitBall}Set $h_{1}=\left(x_{1},\cdots,x_{d}\right)$
on $S^{d-1}$. When $d=2,4,$ or $8$, there exists $\left\{ h_{2},\cdots,h_{d}\right\} \in\left(C^{1}\left(S^{d-1};\R^{d}\right)\right)^{d-1}$
such that $\left(h_{1},h_{2},\cdots,h_{d}\right)\in SO_{d}\left(S^{d-1}\right)$
where $SO_{d}$ refers to the real unitary matrices with positive
determinant.

Otherwise, $d\geq3$ there exists $\left\{ h_{2},\cdots,h_{d+1}\right\} \in\left(C^{1}\left(S^{d-1};\R^{d}\right)\right)^{d}$such
that $\left(h_{1},h_{2},\cdots,h_{d+1}\right)\in SO_{d+1}\left(S^{d-1}\right)$.
\end{lem}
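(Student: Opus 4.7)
The approach splits exactly along the Bott--Milnor--Kervaire dichotomy for parallelizability of spheres, and in each case I would construct the frame explicitly so that orthonormality and orientation can be verified by direct computation rather than through obstruction-theoretic arguments.

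For $d \in \{2,4,8\}$, the plan is to use the normed division algebra structure on $\mathbb{R}^d$ -- respectively the complex numbers, the quaternions, and the octonions. Fixing an orthonormal algebra basis $\{1, e_2, \ldots, e_d\}$, I set $h_i(x) := e_i \cdot x$ using the algebra product. Since the division algebra norm is multiplicative, left multiplication by a unit-norm element is an orthogonal linear transformation of $\mathbb{R}^d$; hence $(h_1(x), h_2(x), \ldots, h_d(x))$ is the image of the orthonormal basis $(1, e_2, \ldots, e_d)$ under an orthogonal map and is itself orthonormal at every $x \in S^{d-1}$. The determinant of this matrix is a continuous $\{\pm 1\}$-valued function on the connected space $S^{d-1}$, hence constant; a suitable ordering of the $e_i$'s makes it $+1$ at $x = 1$, placing the frame in $SO_d$ everywhere. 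The entries are polynomial in $x$, a fortiori $C^1$.

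For $d \geq 3$ with $d \notin \{4,8\}$, Adams' theorem forbids $d-1$ linearly independent continuous tangent fields on $S^{d-1}$, so no $SO_d$-valued completion of $h_1$ can exist in $\mathbb{R}^d$. The resolution is to absorb one extra dimension: the orthogonal complement of $h_1$ inside $\mathbb{R}^{d+1}$ at a point of $S^{d-1}$ is $T_x S^{d-1} \oplus \mathbb{R}$, a rank-$d$ subbundle that is always trivializable, since $TS^{d-1}$ together with the trivial outward-normal line bundle equals $\mathbb{R}^d|_{S^{d-1}}$, which is itself trivial. I would exhibit the trivialization explicitly via
\[
h_1(x) := (x, 0), \qquad h_{i+1}(x) := (e_i - x_i x,\ x_i) \quad \text{for } i = 1, \ldots, d,
\]
as smooth vectors in $\mathbb{R}^{d+1}$, with $e_i$ the standard basis of $\mathbb{R}^d$. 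A direct computation then gives $h_1 \cdot h_{i+1} = x \cdot e_i - x_i |x|^2 = 0$ and $h_{i+1} \cdot h_{j+1} = \delta_{ij} - 2 x_i x_j + x_i x_j |x|^2 + x_i x_j = \delta_{ij}$ on $|x| = 1$, so the $d+1$ vectors are orthonormal at every point of $S^{d-1}$. Orientation is fixed, if necessary, by swapping two of the $h_{i+1}$, using once again that $S^{d-1}$ is connected and the determinant continuous into $\{\pm 1\}$.

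The main obstacle is the non-parallelizable case: the failure of $TS^{d-1}$ to be trivial is intrinsic, and no amount of cleverness within $\mathbb{R}^d$ alone can produce $d-1$ continuous tangent fields of the kind needed. The key insight is that the \emph{stabilized} tangent bundle $TS^{d-1} \oplus \mathbb{R}$ is always trivial, and the formulas above simultaneously serve as the trivialization and as the required $SO_{d+1}$-valued frame. By contrast, the parallelizable case is essentially an algebraic calculation once the right multiplicative structure is recognised, and the exceptional dimensions $\{2,4,8\}$ are exactly those for which such a structure exists.
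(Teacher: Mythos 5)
Your proposal is correct and follows essentially the same route as the paper: for $d=2,4,8$ the explicit frames in the paper are precisely the multiplication tables of $\mathbb{C}$, $\mathbb{H}$ and $\mathbb{O}$, so your division-algebra argument (orthogonality of multiplication by a unit element from multiplicativity of the norm, constancy of the $\pm1$-valued determinant on the connected sphere) is the same construction with the verification made explicit; and for general $d$ your tangent fields $e_{i}-x_{i}x$ coincide, up to sign and ordering, with the paper's $h_{i}=x_{d+2-i}x-e_{d+2-i}$. The one point where your execution is tighter is the augmentation to a $\left(d+1\right)\times\left(d+1\right)$ matrix: your choice of last column, $0$ for the row of $h_{1}=\left(x,0\right)$ and $x_{i}$ for the row of $h_{i+1}=\left(e_{i}-x_{i}x,x_{i}\right)$, yields a matrix that is genuinely orthogonal at every point of $S^{d-1}$, so membership in $SO_{d+1}$ follows after at most one row swap. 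The paper's matrix $\HH_{d}$, whose first row is $\left(x_{1},\cdots,x_{d},1\right)$ of norm $\sqrt{2}$, is only shown to be nonsingular with determinant $\pm1$; it is not orthogonal as written, so the literal claim $\HH_{d}\in SO_{d+1}$ there needs an extra normalisation that your construction avoids. Since all that is used downstream is that the projected family $\left(h_{1},\cdots,h_{d+1}\right)$ has rank $d$ and comes from a continuous path-connected structure group, both versions serve the application equally well.
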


This lemma is proved in \secref{Kervaire}.

\subsection{Proof of Proposition~\ref{pro:MNT-Light} when $d=2,4$ or $8$}
\begin{proof}
Let $\Pi_{i}$ be the geometric complement of $\Omega_{i}$, where
$i\in\left\{ 1,\cdots,N\right\} $. There exists a $C^{1,1}$ diffeomorphism
$H_{i}$ : $B_{2}\stackrel{H_{i}}{\rightarrow}\Pi_{i}$, which induces
a $C^{0,1}$ bijection on the vector fields: $DH_{i}:C^{0,1}\left(B_{2};\mathbf{\R}^{d}\right)\rightarrow C^{0,1}\left(\Pi_{i};\mathbf{\R}^{d}\right)$.
It is a map which maps $SO_{d}$ to $SO_{d}$ since the degree of
$H_{i}$ is either $1$ or $-1$ and moreover it maps the tangent
vectors (respectively the normal vector) on the sphere to the tangent
vectors (respectively the normal vector) on $\partial\Pi_{i}$, which
is the outer boundary of $\Omega_{i}$. Take $B_{r_{i}}\subset B_{2}\subset B_{r_{i}^{*}},1<r_{i}<2<r_{i}^{*}$,
such that 
\[
g_{H_{i}\left(B_{r_{i}}\right)}+1=g_{\Pi_{i}}=g_{H_{i}\left(B_{r_{i}^{*}}\right)}
\]
and such that the genus of the $\Pi_{i}\setminus H_{i}\left(B_{r_{i}}\right)$
equals to the genus of $H_{i}\left(B_{r_{i}^{*}}\right)\setminus\Pi_{i}$,
and equals one. In particular any $\Omega_{j}$, $j\neq i$, contained
in $\Pi_{i}$ are contained in $H_{i}\left(B_{r_{i}}\right)$ and
$H_{i}\left(B_{r_{i}^{*}}\right)$. Applying \lemref{UnitBall} with
$R=2$, when $d=2,4,8$ , there exists $\left\{ h_{1},\cdots,h_{d}\right\} $
in $B_{2}$ a group of $C^{1}$ unit vector fields on $\partial B_{2}.$
We construct $\left\{ f_{1},\cdots,f_{d}\right\} \in C^{0,1}\left(\overline{H_{i}\left(B_{r_{i}^{*}}\right)\setminus H_{i}\left(B_{r_{i}}\right)};SO_{d}\right)$
as follows.
\begin{criterion}
\label{cri:Rules for 2,4,8}
\end{criterion}

\begin{enumerate}
\item $\left\{ f_{1},\cdots,f_{d}\right\} =\left\{ DH_{i}\left(h_{1}\right),\cdots,DH_{i}\left(h_{d}\right)\right\} $
on $\partial\Pi_{i}$.
\item On $H_{i}\left(\partial B_{r_{i}}\right)$ and $H_{i}\left(\partial B_{r_{i}^{*}}\right)$,
let $\left\{ f_{1},\cdots,f_{d}\right\} =\left\{ e_{1},\cdots,e_{d}\right\} $.
In other words, we have $\left(f_{1},\cdots,f_{d}\right)=I_{d}$ on
$H_{i}\left(\partial B_{r_{i}}\right)$ and $H_{i}\left(\partial B_{r_{i}^{*}}\right)$.
\item Since $SO_{d}$ is path connected, at each $x\in\partial B_{r_{i}}$
there exists $S\in C^{0,1}\left(\partial B_{r_{i}}\times\left[r_{i},r_{i}^{*}\right];SO_{d}\right)$
a path such that $S\left(x,r_{i}\right)=DH_{i}^{-1}\left(I_{d}\right)$,
$S\left(x,2\right)=\left(h_{1},\cdots,h_{d}\right)\left(2\frac{x}{\left\Vert x\right\Vert }\right)$
and $S\left(x,r_{i}^{*}\right)=DH_{i}^{-1}\left(I_{d}\right)$. There
holds 
\[
\left|S\left(x,r\right)-S\left(x,r^{\prime}\right)\right|\leq\frac{C(d)}{r_{i}^{*}-r_{i}}\left|r-r^{\prime}\right|,
\]
and 
\[
\left\Vert D_{x}S\left(x,r\right)\right\Vert _{\infty}\leq C(d)\left\Vert DH_{i}^{-1}\right\Vert _{\infty}\left\Vert \left(Dh_{1},\cdots,Dh_{d}\right)\right\Vert _{\infty}.
\]
 
\item For any $r\in\left(r_{i},r_{i}^{*}\right)$, set $\left(f_{1},\cdots,f_{d}\right)\left(H_{i}\left(r\frac{x}{\left\Vert x\right\Vert }\right)\right)=DH_{i}\left(S_{x}\left(r\right)\right)\coloneqq DH_{i}\left(S\left(x,r\right)\right)$.
\end{enumerate}
In the construction above, for any $x\in\overline{H_{i}\left(B_{r_{i}^{*}}\right)\setminus H_{i}\left(B_{r_{i}}\right)}$,
we have $\left(f_{1},\cdots,f_{d}\right)(x)\in SO_{d}$. Moreover
since $\left(f_{1},\cdots,f_{d}\right)$ is constructed by a composition
of Lipschitz maps, $\left\{ f_{1},\cdots,f_{d}\right\} $ is of class
$C^{0,1}$ in $\overline{H_{i}\left(B_{r_{i}^{*}}\right)\setminus H_{i}\left(B_{r_{i}}\right)}$
. Indeed, 
\begin{eqnarray}
\left\Vert f_{k}\left(H_{i}\left(x\right)\right)-f_{k}\left(H_{i}\left(y\right)\right)\right\Vert  & \leq & \left\Vert f_{k}\left(H_{i}\left(\left\Vert x\right\Vert \frac{x}{\left\Vert x\right\Vert }\right)\right)-f_{k}\left(H_{i}\left(\frac{\left\Vert y\right\Vert +\left\Vert x\right\Vert }{2}\frac{x}{\left\Vert x\right\Vert }\right)\right)\right\Vert \nonumber \\
 & + & \left\Vert f_{k}\left(H_{i}\left(\frac{\left\Vert x\right\Vert +\left\Vert y\right\Vert }{2}\frac{x}{\left\Vert x\right\Vert }\right)\right)-f_{k}\left(H_{i}\left(\frac{\left\Vert y\right\Vert +\left\Vert x\right\Vert }{2}\frac{y}{\left\Vert y\right\Vert }\right)\right)\right\Vert \nonumber \\
 & + & \left\Vert f_{k}\left(H_{i}\left(\frac{\left\Vert x\right\Vert +\left\Vert y\right\Vert }{2}\frac{y}{\left\Vert y\right\Vert }\right)\right)-f_{k}\left(H_{i}\left(\left\Vert y\right\Vert \frac{y}{\left\Vert y\right\Vert }\right)\right)\right\Vert \nonumber \\
 & = & \left\Vert DH_{i}\left(S_{x}\left(\left\Vert x\right\Vert \right)\right)-DH_{i}\left(S_{x}\left(\frac{\left\Vert y\right\Vert +\left\Vert x\right\Vert }{2}\right)\right)\right\Vert \label{eq:Lip Proof}\\
 & + & \left\Vert DH_{i}\left(S_{y}\left(\left\Vert y\right\Vert \right)\right)-DH_{i}\left(S_{y}\left(\frac{\left\Vert y\right\Vert +\left\Vert x\right\Vert }{2}\right)\right)\right\Vert \nonumber \\
 & + & \left\Vert DH_{i}\left(S_{x}\left(\frac{\left\Vert y\right\Vert +\left\Vert x\right\Vert }{2}\right)\right)-DH_{i}\left(S_{y}\left(\frac{\left\Vert y\right\Vert +\left\Vert x\right\Vert }{2}\right)\right)\right\Vert \nonumber \\
 & \leq & C\left(d\right)\left(\frac{1}{r_{i}^{*}-r_{i}}+\left\Vert DH_{i}^{-1}\right\Vert _{\infty}\left\Vert \left(Dh_{0},\cdots,Dh_{d-1}\right)\right\Vert _{\infty}\right)\left\Vert x-y\right\Vert .\nonumber 
\end{eqnarray}

Note that for each $i\in\left\{ 1,\cdots,N\right\} $, we have $\left(f_{1},\cdots,f_{d}\right)=I_{d}$
on $\partial\left(H_{i}\left(B_{r_{i}^{*}}\right)\setminus H_{i}\left(B_{r_{i}}\right)\right)$.

Set 
\begin{equation}
\left(f_{1},\cdots,f_{d}\right)=I_{d}\text{ in \ensuremath{Q\coloneqq\Omega\setminus\cup_{i=1}^{N}\overline{H_{i}\left(B_{r_{i}^{*}}\right)\setminus H_{i}\left(B_{r_{i}}\right)}} }\label{eq:constructionInOtherPart for special d}
\end{equation}
In each $\overline{H_{i}\left(B_{r_{i}^{*}}\right)\setminus H_{i}\left(B_{r_{i}}\right)}$,
$\left\{ f_{1},\cdots,f_{d}\right\} $ is of class $C^{0,1}$, continuous
on $\partial\left(H_{i}\left(B_{r_{i}^{*}}\right)\setminus H_{i}\left(B_{r_{i}}\right)\right)$
and Lipschitz continuous in $Q$ thanks to \ref{eq:constructionInOtherPart for special d}.
Thus it is of class $C^{0,1}$ in the whole $\Omega$.

To conclude the proof of proposition~\proref{MNT-Light}, we now
check that for every $u\in H\left(\Omega\right)$, such that $Lu=0$
in $\Omega$, there holds $\JacSJ\left(u,\mathcal{F}\right)=\left(\left(A\nabla u+bu\right)\cdot f_{1},\nabla u\cdot f_{2},\cdots,\nabla u\cdot f_{d},u\right)$
is of class $C^{0,\alpha}$ in $\Omega$. Note that for each $H_{i}\left(\overline{B_{r_{i}^{*}}\setminus B_{r_{i}}}\right)$,
there exists only one $j\in\left\{ 1,\cdots,N+1\right\} \setminus\left\{ i\right\} $
such that $\Omega_{j}\cap H_{i}\left(\overline{B_{r_{i}^{*}}\setminus B_{r_{i}}}\right)\neq\emptyset$
and $\Gamma_{ij}=H_{i}\left(\partial B_{2}\right)\subset H_{i}\left(\overline{B_{r_{i}^{*}}\setminus B_{r_{i}}}\right)$.
Thanks to the continuity of the flux $\left(ADu+bu\right)\cdot n=\left(ADu+bu\right)\cdot f_{1}$
on $\Gamma_{ij}$ , the Lipschitz continuity of $\mathcal{F}$ ,the
$C^{0,\alpha}$ continuity of $Du,u,A$ and $B$ in $\Omega_{i}$
or $\Omega_{j}$, we conclude that $\JacSJ\left(u,\mathcal{F}\right)$
is of class $C^{0,\alpha}$ in each $H_{i}\left(\overline{B_{r_{i}^{*}}\setminus B_{r_{i}}}\right)$
and $Q$. Moreover, we note that on each $\partial H_{i}\left(B_{r_{i}^{*}}\setminus B_{r_{i}}\right)$,
the coefficients $A$ and $b$ are uniformly $C^{0,\alpha}$, as they
are in the interior of $\Omega_{i}$ or $\Omega_{j}$. Therefore,
we have $\JacSJ\left(u,\mathcal{F}\right)$ is of class $C^{0,\alpha}$
on $\partial Q\setminus\partial\Omega=\cup_{i}\partial H_{i}\left(B_{r_{i}^{*}}\setminus B_{r_{i}}\right)$
(Note that for different $k$ and $s$, $\partial H_{k}\left(B_{r_{k}^{*}}\setminus B_{r_{k}}\right)\cap\partial H_{s}\left(B_{r_{s}^{*}}\setminus B_{r_{s}}\right)=\emptyset$).
In particular it is continuous. Thus $\JacSJ\left(u,\mathcal{F}\right)$
is of class $C^{0,\alpha}$ on $\Omega$.
\end{proof}

\subsection{\emph{\noun{Proof of Proposition~\proref{MNT-Light} for other dimensions.}}}
\begin{proof}
Let $\Pi_{i}$ be the geometric complement of $\Omega_{i}$, where
$i\in\left\{ 1,\cdots,N\right\} $. There exists a $C^{1,1}$ diffeomorphism
$H_{i}$ : $B_{2}\stackrel{H_{i}}{\rightarrow}\Pi_{i}$, which induces
a $C^{0,1}$ bijection on the vector fields: $DH_{i}:C^{0,1}\left(B_{2};\mathbf{\R}^{d}\right)\rightarrow C^{0,1}\left(\Pi_{i};\mathbf{\R}^{d}\right)$.
It is a map which maps $SO_{d}$ to $SO_{d}$ since the degree of
$H_{i}$ is either $1$ or $-1$ and moreover it maps the tangent
vectors (respectively the normal vector) on the sphere to the tangent
vectors (respectively the normal vector) on $\partial\Pi_{i}$, which
is the outer boundary of $\Omega_{i}$. Take $B_{r_{i}}\subset B_{2}\subset B_{r_{i}^{*}},1<r_{i}<2<r_{i}^{*}$,
such that 
\[
g_{H_{i}\left(B_{r_{i}}\right)}+1=g_{\Pi_{i}}=g_{H_{i}\left(B_{r_{i}^{*}}\right)}
\]

and such that the genus of the $\Pi_{i}\setminus H_{i}\left(B_{r_{i}}\right)$
equals to the genus of $H_{i}\left(B_{r_{i}^{*}}\right)\setminus\Pi_{i}$,
and equals one. In particular any $\Omega_{j}$, $j\neq i$, contained
in $\Pi_{i}$ are contained in $H_{i}\left(B_{r_{i}}\right)$ and
$H_{i}\left(B_{r_{i}^{*}}\right)$.

For any $M=\left(m_{ij}\right)_{\left(d+1\right)\times\left(d+1\right)}\in\R^{d+1}\times\R^{d+1},$
we write $\mathcal{P}\left(M\right)=\left(m_{i,j}\right)_{\left(d+1\right)\times d}.$
Thanks to \lemref{UnitBall}, we construct $\left\{ f_{1},\cdots,f_{d+1}\right\} \in C^{0,1}\left(\overline{H_{i}\left(B_{r_{i}^{*}}\right)\setminus H_{i}\left(B_{r_{i}}\right)};\mathbb{R}^{d}\right)^{d+1}$
with rank equals to $d$ as follows:
\begin{enumerate}
\item $\left\{ f_{1},\cdots,f_{d+1}\right\} =\left\{ DH_{i}\left(h_{1}\right),\cdots,DH_{i}\left(h_{d+1}\right)\right\} $
on $\partial\Pi_{i}$
\item On $H_{i}\left(\partial B_{r_{i}}\right)$ and $H_{i}\left(\partial B_{r_{i}^{*}}\right)$,
let $\left\{ f_{1},\cdots,f_{d+1}\right\} =\mathcal{P}\left(I_{d+1}\right)$
\item There exists a $C^{0,1}$ path $S:\partial B_{r_{i}}\times[r_{i},r_{i}^{*}]\rightarrow SO_{d+1}$
such that $S(x,r_{i})=DH_{i}^{-1}\left(I_{d+1}\right)$, $S(2)=DH_{i}^{-1}\left(\HH_{d}\left(x\right)\right)$
(where $\mathbf{H}_{d}$ is given in \eqref{def_Hd}) and $S\left(r_{i}^{*}\right)=DH_{i}^{-1}\left(I_{d+1}\right)$.
For any $r\in\left(r_{i},r_{i}^{*}\right)$ and $x\in\partial B_{r_{i}}$,
take $\left(f_{1},\ldots,f_{d}\right)(H_{i}\left(\frac{rx}{r_{i}}\right))=\mathcal{P}\left(DH_{i}\left(S\left(x,r\right)\right)\right)$.
\end{enumerate}
Since for any $x\in\partial B_{r_{i}}$ and $r\in\left[r_{i},r_{i}^{*}\right],$$S\left(x,r\right)\in SO_{d+1}$.
We have $\rank S\left(x,r\right)=d+1$. Therefore, $\rank\mathcal{P}S\left(x,r\right)=d$.
As before, we conclude that $\left\{ f_{1},\cdots,f_{d+1}\right\} $
is also of class $C^{0,1}$ in $\overline{H_{i}\left(B_{r_{i}^{*}}\right)\setminus H_{i}\left(B_{r_{i}}\right)}$.

Note that for each $i\in\left\{ 1,\cdots,N\right\} $, we have $\left(f_{1},\cdots,f_{d+1}\right)=\mathcal{P}\left(I_{d+1}\right)$
on $\partial\left(H_{i}\left(B_{r_{i}^{*}}\right)\setminus H_{i}\left(B_{r_{i}}\right)\right)$.

Set 
\begin{equation}
\left(f_{1},\cdots,f_{d+1}\right)=\mathcal{P}\left(I_{d+1}\right)\text{ in \ensuremath{Q\coloneqq\Omega\setminus\cup_{i=1}^{N}\overline{H_{i}\left(B_{r_{i}^{*}}\right)\setminus H_{i}\left(B_{r_{i}}\right)}} }\label{eq:constructionInOtherPart}
\end{equation}
 As we proved before, in each $\overline{H_{i}\left(B_{r_{i}^{*}}\right)\setminus H_{i}\left(B_{r_{i}}\right)}$,
$\left\{ f_{1},\cdots,f_{d}\right\} $ is of class $C^{0,1}$. It
is continuous on $\partial\left(H_{i}\left(B_{r_{i}^{*}}\right)\setminus H_{i}\left(B_{r_{i}}\right)\right)$
and Lipschitz continuous in $Q$ thanks to \eqref{constructionInOtherPart},
and therefore of $C^{0,1}$ globally on $\Omega$.

The rest of the proof is identical to that given when $d=2,4$ or
$8.$
\end{proof}

\newcommand{\etalchar}[1]{$^{#1}$}

\appendix

\specialsection{Additional Proofs}

\subsection{\label{subsec:ProofLemTrans}Proof of \lemref{translation_to_get_well-posedness}}
\begin{proof}[Proof of \lemref{translation_to_get_well-posedness}]
 Given $v\in H_{0}^{1}\left(\Omega\right),$there holds, using the
a priori bounds \assuref{regularitecoeff_}, Cauchy-Schwarz and completing
a square,
\begin{eqnarray*}
\left\langle Lv,v\right\rangle _{H^{-1}\left(\Omega\right)\times H^{1}\left(\Omega\right)} & = & \int_{\Omega}ADu\cdot Du+\left(b+c\right)Du\cdot u+qu^{2}\text{d}x\\
 & \geq & \lambda\left\Vert Du\right\Vert _{L^{2}\left(\Omega\right)}^{2}-2\lambda^{-1}\left\Vert Du\right\Vert _{L^{2}\left(\Omega\right)}\left\Vert u\right\Vert _{L^{2}\left(\Omega\right)}-\lambda^{-1}\left\Vert u\right\Vert _{L^{2}\left(\Omega\right)}^{2}\\
 & \geq & \lambda\left(\left\Vert Du\right\Vert _{L^{2}\left(\Omega\right)}-\lambda^{-2}\left\Vert u\right\Vert _{L^{2}\left(\Omega\right)}\right)^{2}-\left(\lambda^{-1}+\lambda^{-3}\right)\left\Vert u\right\Vert _{L^{2}\left(\Omega\right)}^{2}.
\end{eqnarray*}
Thus writing $M=\lambda^{-1}+\lambda^{-3}+1$, for any $i_{1},\cdots,i_{N+1}\in\left\{ 1,\cdots,N+1\right\} ^{N+1}$,
all Dirichlet boundary value problems associated with $L\left[i_{1},\cdots,i_{N+1}\right]+MI_{d}$
are well-posed in $\Omega$. If the Dirichlet boundary value problem
associated with $L_{i}:=L\left[i_{1},\cdots,i_{N+1}\right]$ is not
well-posed, there exists a non-zero solution of 
\[
\begin{cases}
L_{i}u=0 & \text{in \ensuremath{\Omega}}\\
u=0 & \text{on \ensuremath{\partial\Omega}}
\end{cases}
\]
Consider $\left(L_{i}+MI_{d}\right)^{-1}$ as a linear operator from
$L^{2}\left(\Omega\right)$ to $L^{2}\left(\Omega\right)\cap H_{0}^{1}\left(\Omega\right)$.
The ill-posedness of $L_{i}$ implies that $M^{-1}\in\sigma\left(\left(L_{i}+MI_{d}\right)^{-1}\right)$.
Thanks to the Rellich--Kondrachov embedding, $\left(L_{i}+MI_{d}\right)^{-1}:L^{2}\left(\Omega\right)\to L^{2}\left(\Omega\right)$
is a compact linear operator acting on $L^{2}\left(\Omega\right)$,
therefore $M^{-1}$ is an isolated eigenvalue. That is, there exists
$\aleph_{\left[i_{1},\cdots,i_{N+1}\right]}^{1}>0$ such that $B\left(M^{-1},\aleph_{\left[i_{1},\cdots,i_{N+1}\right]}^{1}\right)\setminus\{M^{-1}\}\subset\rho\left(\left(L_{i}+MI_{d}\right)^{-1}\right)$
.

When the Dirichlet boundary value problem is well-posed, $M^{-1}\in\rho\left(\left(L_{i}+MI_{d}\right)^{-1}\right)$.
The resolvent is open, thus there exists some $\aleph_{\left[i_{1},\cdots,i_{N+1}\right]}^{2}>0$
such that $B\left(M^{-1},\aleph_{\left[i_{1},\cdots,i_{N+1}\right]}^{2}\right)\subset\rho\left(\left(L_{i}+MI_{d}\right)^{-1}\right)$.

Now define 
\[
\aleph=\min_{i_{1},\cdots,i_{N+1}\in\left\{ 1,\cdots,N+1\right\} }\left(\aleph_{\left[i_{1},\cdots,i_{N+1}\right]}^{1},\aleph_{\left[i_{1},\cdots,i_{N+1}\right]}^{2}\right),\text{ and }\vartheta=\frac{\aleph M^{2}}{1+\aleph M}.
\]
We verify that for every $\translation\in\left(0,\vartheta\right)$
$M^{-1}\not\in\sigma\left(\left(L_{i}+\translation+M\right)^{-1}\right)$,
which in turn means that $L_{i}+\translation$ is well posed.
\end{proof}

\subsection{\label{subsec:Proof-of-Poincare}Proof of \lemref{Poincare-Annulus}}
\begin{fact*}
There exists some $\eta_{0}>0$, such that for any $0<\eta<\eta_{0},$
and any $t\in\left(\frac{1}{2},1\right)$ there holds
\begin{equation}
\forall u\in H_{0}^{1}\left(U^{t}\right),\left\langle L_{k}u,u\right\rangle _{H^{-1}\left(U^{t}\right),H_{0}^{1}\left(U^{t}\right)}\geq\frac{1}{3}\lambda\left\Vert u\right\Vert _{H_{0}^{1}\left(U^{t}\right)}^{2}.\label{eq:coerciveL-1}
\end{equation}
\end{fact*}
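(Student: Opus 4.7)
The plan is to test the bilinear form against $u$ itself and control the lower-order contributions using a Poincaré inequality on the thin set $U^t$ whose constant degenerates to zero as $\eta \to 0$. For $u \in H_0^1(U^t)$, integration by parts in the divergence term yields
\[
\langle L_k u, u\rangle = \int_{U^t} A Du \cdot Du \, dx + \int_{U^t} u(b+c) \cdot Du \, dx + \int_{U^t} (q+\translationconstant) u^2 \, dx.
\]
Using the ellipticity bound $A\zeta\cdot\zeta \geq \lambda|\zeta|^2$ and the $L^\infty$ bounds of assumption~\ref{assu:regularitecoeff_} on $b$, $c$, $q$, Cauchy--Schwarz produces
\[
\langle L_k u, u\rangle \geq \lambda \|Du\|_{L^2(U^t)}^2 - 2\lambda^{-1} \|u\|_{L^2(U^t)}\|Du\|_{L^2(U^t)} - (\lambda^{-1} + \translationconstant)\|u\|_{L^2(U^t)}^2,
\]
so everything reduces to absorbing the $\|u\|_{L^2}$ factors into $\|Du\|_{L^2}$.

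The key step, and the main obstacle, is a uniform thin-annulus Poincaré inequality: there exists $C_\psi$ depending only on $\|\psi_k\|_{C^1}$ and $\|\psi_k^{-1}\|_{C^1}$ such that
\[
\forall t \in (1/2,1),\ \forall u \in H_0^1(U^t),\qquad \|u\|_{L^2(U^t)} \leq C_\psi\, \eta\, \|Du\|_{L^2(U^t)}.
\]
To establish this, pull back through the $C^2$-diffeomorphism $\psi_k$: setting $\tilde{u} = u \circ \psi_k^{-1}$, one has $\tilde u \in H_0^1(\mathcal{A}_\eta^t)$ on the spherical shell $\mathcal{A}_\eta^t = \{x : (1-\eta)/t < |x| < 1/t\}$, whose radial thickness is at most $2\eta$ for $t \in (1/2,1)$. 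Writing $\tilde{u}$ in polar coordinates, the inner-boundary vanishing $\tilde{u}(r_{\min},\theta)=0$ combined with the fundamental theorem of calculus and Cauchy--Schwarz in the radial variable gives a Poincaré constant bounded by a universal multiple of $\eta$, uniformly in $t$. Transferring back through the bounded Jacobians of $\psi_k$ and $\psi_k^{-1}$ yields the claimed estimate on $U^t$.

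Combining the two bounds,
\[
\langle L_k u, u\rangle \geq \bigl(\lambda - 2\lambda^{-1}C_\psi\,\eta - (\lambda^{-1}+\translationconstant)C_\psi^2\,\eta^2\bigr)\|Du\|_{L^2(U^t)}^2.
\]
Choose $\eta_0 > 0$ small enough that for every $\eta \in (0, \eta_0)$ the parenthesis exceeds $\tfrac{2\lambda}{3}$ and also $1 + C_\psi^2 \eta^2 \leq 2$. Then $\|Du\|_{L^2(U^t)}^2 \geq \tfrac{1}{2}\|u\|_{H_0^1(U^t)}^2$ by Poincaré again, and we conclude
\[
\langle L_k u, u\rangle \geq \tfrac{\lambda}{3}\,\|u\|_{H_0^1(U^t)}^2,
\]
as required. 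The subtle point is making the Poincaré constant genuinely uniform in $t\in(1/2,1)$: the estimate $\|D\psi_k^{-1}\|_\infty$ is controlled on the fixed reference neighbourhood $U_{1,k}$, and the annulus thickness $\eta/t \in (\eta,2\eta)$ has a $t$-independent bound, which is why choosing $\eta_0$ depending only on $\lambda$, $\translationconstant$ and the diffeomorphism constants suffices.
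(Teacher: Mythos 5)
Your proof is correct and follows essentially the same strategy as the paper's: test $L_{k}u$ against $u$, use ellipticity and the a priori coefficient bounds from assumption~\ref{assu:regularitecoeff_}, and absorb the lower-order terms through a Poincar\'e inequality on the thin shell $U^{t}$ whose constant is $O(\eta)$ uniformly in $t\in\left(\frac{1}{2},1\right)$. The one genuine difference is how that Poincar\'e inequality is obtained: the paper isolates it as a separate lemma on the annulus $B_{s}\setminus B_{t}$ and derives the constant $c\left(s-t\right)^{2}$ by a scaling argument on the first radial Dirichlet eigenvalue, then transfers it to $U^{t}$ by the change of variables $\psi_{k}$; you instead argue directly on the pulled-back shell via the fundamental theorem of calculus along radii from the inner boundary (where the trace vanishes) and Cauchy--Schwarz, with the bounded weight $r^{d-1}$ and the bi-Lipschitz bounds on $\psi_{k}$, $\psi_{k}^{-1}$ giving uniformity in $t$. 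Both routes are valid; yours is more elementary and self-contained, the paper's packages the scaling cleanly in a reusable lemma. Two further cosmetic differences: you keep the cross term and substitute the Poincar\'e bound into it, where the paper first completes a square to reach $\frac{\lambda}{2}\left\Vert \nabla u\right\Vert _{L^{2}}^{2}-\frac{\lambda^{2}+2}{\lambda^{3}}\left\Vert u\right\Vert _{L^{2}}^{2}$, and your final step explicitly converts $\left\Vert \nabla u\right\Vert _{L^{2}\left(U^{t}\right)}^{2}$ into the full $\left\Vert u\right\Vert _{H_{0}^{1}\left(U^{t}\right)}^{2}$ norm (at the cost of a factor absorbed into the choice of $\eta_{0}$), a point the paper leaves implicit.
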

\begin{proof}
Indeed, we have, for any $t>0$,
\begin{eqnarray}
 &  & \left\langle L_{k}u,u\right\rangle \nonumber \\
 & = & \int_{U^{t}}A\nabla u\cdot\nabla u+u\left(b+c\right)\cdot\nabla u+qu^{2}\text{d}x\nonumber \\
 & \geq & \lambda\left\Vert \nabla u\right\Vert _{L^{2}\left(U^{t}\right)}^{2}-2\lambda^{-1}\int_{U^{t}}\left|\nabla u\right|\left|u\right|\text{d}x-\lambda^{-1}\left\Vert u\right\Vert _{L^{2}\left(U^{t}\right)}^{2}\nonumber \\
 & \geq & \frac{\lambda}{2}\left\Vert \nabla u\right\Vert _{L^{2}\left(U^{t}\right)}^{2}-\frac{\lambda^{2}+2}{\lambda^{3}}\left\Vert u\right\Vert _{L^{2}\left(U^{t}\right)}^{2}.\label{eq:estimate-for-coercivity-in-a-small-domain}
\end{eqnarray}
To address the lower order term we rely on \lemref{Poincare-Annulus}.
Since $U^{t}=\psi_{k}^{-1}\left(B_{\frac{1}{t}}\setminus B_{\frac{1}{t}\left(1-\eta\right)}\right)$,
by changing variables, \lemref{Poincare-Annulus} shows that for any
$u\in H_{0}^{1}\left(U^{t}\right)$ there holds
\begin{equation}
\left\Vert u\right\Vert _{L^{2}\left(U^{t}\right)}^{2}\leq C\frac{\eta^{2}}{t^{2}}\left\Vert \nabla u\right\Vert _{L^{2}(U^{t})}^{2}\leq4C\eta^{2}\left\Vert \nabla u\right\Vert _{L^{2}(U^{t})}^{2}.\label{eq:estimate-poincare}
\end{equation}

Combining \eqref{estimate-for-coercivity-in-a-small-domain} and \eqref{estimate-poincare},
we have 
\[
\left\langle Lu,u\right\rangle _{H^{-1}\left(U^{t}\right)\times H_{0}^{1}\left(U^{t}\right)}\geq\left(\frac{\lambda}{2}-4C\frac{\lambda^{2}+2}{\lambda^{3}}\eta^{2}\right)\left\Vert \nabla u\right\Vert _{L^{2}\left(U^{t}\right)}^{2}.
\]
Choosing $\eta>0$ small enough there holds for all $t\in\left(\frac{1}{2},1\right)$,
\[
\left\langle Lu,u\right\rangle _{H^{-1}\left(U^{t}\right)\times H^{1}\left(U^{t}\right)}\geq\frac{1}{3}\lambda\left\Vert \nabla u\right\Vert _{L^{2}\left(U^{t}\right)}^{2}.
\]
\end{proof}
\begin{lem}
\label{lem:Poincare-Annulus}Write $B_{r}$ for the ball centred at
the origin of radius $r$. Given $0<r_{2}<r_{1}$ , for any $s$ and
$t$ such that $r_{1}<t<s<r_{2}$, there holds 
\[
\forall u\in H_{0}^{1}\left(B_{s}\setminus B_{t}\right)\ \left\Vert u\right\Vert _{L^{2}\left(B_{s}\setminus B_{t}\right)}^{2}\leq c\left(s-t\right)^{2}\left\Vert \nabla u\right\Vert _{L^{2}(B_{s}\setminus B_{t})}^{2}.
\]
for some constant $c$, which depends on $r_{1}$ and $r_{2}$ only.
\end{lem}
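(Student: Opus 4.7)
The plan is to establish this as a one-dimensional Poincaré inequality along radial rays, then integrate over the sphere. Since $u \in H_0^1(B_s \setminus B_t)$, a density argument reduces matters to $u \in C_c^\infty(B_s \setminus B_t)$, so that $u(t\omega) = 0$ for every $\omega \in S^{d-1}$.

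Write $x = \rho\omega$ in spherical coordinates, with $\omega \in S^{d-1}$ and $\rho \in (t,s)$. The fundamental theorem of calculus along the radial direction gives
\[
u(\rho\omega) = \int_t^{\rho} \partial_r u(r\omega)\, dr,
\]
and applying the Cauchy--Schwarz inequality yields the pointwise bound
\[
|u(\rho\omega)|^2 \leq (\rho - t) \int_t^{\rho} |\partial_r u(r\omega)|^2\, dr \leq (s-t) \int_t^{s} |\partial_r u(r\omega)|^2\, dr.
\]

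Integrating against $\rho^{d-1}\,d\rho\,d\omega$ over the annulus and using that $\rho \in (t,s) \subset (r_1, r_2)$,
\[
\|u\|_{L^2(B_s \setminus B_t)}^2 = \int_{S^{d-1}} \int_t^s |u(\rho\omega)|^2 \rho^{d-1}\, d\rho\, d\omega \leq (s-t)\, r_2^{d-1} \int_{S^{d-1}} \int_t^s \left(\int_t^s |\partial_r u(r\omega)|^2\, dr\right) d\rho\, d\omega.
\]
The inner double integral factors and the $d\rho$ integration gives a further factor of $(s-t)$. To return to the intrinsic $L^2$-norm of the gradient, I insert the Jacobian weight $r^{d-1}$ back in, at the cost of the constant $r_1^{-(d-1)}$:
\[
\int_{S^{d-1}} \int_t^s |\partial_r u(r\omega)|^2\, dr\, d\omega \leq r_1^{-(d-1)} \int_{S^{d-1}} \int_t^s |\partial_r u(r\omega)|^2 r^{d-1}\, dr\, d\omega \leq r_1^{-(d-1)} \|\nabla u\|_{L^2(B_s \setminus B_t)}^2,
\]
since $|\partial_r u| \leq |\nabla u|$. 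Combining these estimates gives the claimed inequality with the explicit constant $c = (r_2/r_1)^{d-1}$, depending only on $r_1$ and $r_2$.

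There is no real obstacle here: the argument is a standard radial Poincaré inequality, and the only delicate point is matching the Jacobian weights $\rho^{d-1}$ in the volume element when passing between the intrinsic $L^2$-norms on the annulus and the $(r,\omega)$-integrals. Bounding $\rho^{d-1} \leq r_2^{d-1}$ on top and compensating by $r^{d-1} \geq r_1^{d-1}$ on the bottom is what produces a constant independent of $s,t$.
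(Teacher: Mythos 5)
Your proof is correct, but it follows a genuinely different route from the paper's. The paper argues spectrally: it introduces the first Dirichlet eigenvalue $\rho^{1}_{st}$ of the Laplacian on the annulus $B_{s}\setminus B_{t}$, observes that the first eigenfunction is radial, rescales the radial ODE onto the reference annulus $B_{r_{2}}\setminus B_{r_{1}}$ via the affine change of variable $r\mapsto\frac{r_{2}-r_{1}}{s-t}\left(r-t\right)+r_{1}$ to obtain $\rho^{1}_{st}=\bigl(\frac{r_{2}-r_{1}}{s-t}\bigr)^{2}\rho^{1}_{r_{2}r_{1}}$, and then concludes through the variational (Rayleigh quotient) characterisation of $\rho^{1}_{st}$, with the implicit constant $\bigl(\rho^{1}_{r_{1}r_{2}}\bigr)^{-1}$. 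You instead prove the inequality by hand: fundamental theorem of calculus along radial rays starting from the inner sphere, Cauchy--Schwarz, and integration in polar coordinates, trading the Jacobian weights through $r_{1}^{d-1}\leq r^{d-1}\leq r_{2}^{d-1}$. Your argument is more elementary and produces the explicit constant $c=\left(r_{2}/r_{1}\right)^{d-1}$ (which, like the paper's constant, also carries a harmless dependence on the fixed dimension $d$); it moreover sidesteps the rescaling of the eigenvalue problem, a step that requires some care since the radial operator $\frac{1}{r^{d-1}}\partial_{r}\bigl(r^{d-1}\partial_{r}\cdot\bigr)$ is not invariant under affine changes of $r$, so the eigenvalue relation used in the paper should really be read as a comparison rather than an exact identity. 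Both proofs capture the essential point, namely that the Poincar\'e constant of a thin spherical shell scales like the square of its thickness, uniformly for $t,s$ in the fixed window determined by $r_{1}$ and $r_{2}$ (read, as you did, with the intended ordering $r_{1}<t<s<r_{2}$).
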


\begin{proof}
Consider the Dirichlet eigenvalue problem in $B_{s}\setminus B_{t}$
\[
\begin{cases}
\triangle u=\rho_{st}u & \text{in \ensuremath{B_{s}\setminus B_{t}} }\\
u=0 & \text{on \ensuremath{\partial B_{s}}}\\
u=0 & \text{on \ensuremath{\partial B_{t}}}
\end{cases}
\]

We note that the first eigensolution is radial, $u=f_{st}\left(\left|r\right|\right),$
and $f_{st}\in C^{\infty}\left(\left(t,s\right)\right)$ satisfies
\[
\frac{1}{r^{d-1}}\partial_{r}\left(r^{d-1}\partial_{r}f_{st}\right)=\rho_{st}^{1}f\text{ in }\left(t,s\right)\quad f_{st}\left(s\right)=f_{st}\left(t\right)=0.
\]
By the change of variable $r\to\frac{r_{2}-r_{1}}{s-t}\left(r-t\right)+r_{1},$
we find that $f_{st}\left(r\right)=f_{r_{2}r_{1}}\left(\frac{r_{1}-r_{2}}{s-t}\left(r-t\right)+r_{2}\right)$,
and $\rho_{st}^{1}=\left(\frac{r_{2}-r_{1}}{s-t}\right)^{2}\rho_{r_{2}r_{1}}^{1}.$
\[
\rho_{st}^{1}=\inf_{\underset{u\neq0}{u\in H_{0}^{1}\left(B_{s}\setminus B_{t}\right)}}\text{\ensuremath{\frac{\int_{B_{s}\setminus B_{t}}\nabla u\cdot\nabla u\text{d}x}{\int_{B_{s}\setminus B_{t}}u^{2}\text{d}x}}=\ensuremath{\inf_{\underset{u\neq0}{u\in H_{0}^{1}\left(B_{s}\setminus B_{t}\right)}}\text{\ensuremath{\frac{\left\Vert \nabla u\right\Vert _{L^{2}\left(B_{s}\setminus B_{t}\right)}^{2}}{\left\Vert u\right\Vert _{L^{2}\left(B_{s}\setminus B_{t}\right)}^{2}}}}},}
\]

We conclude that $\left\Vert u\right\Vert _{L^{2}\left(B_{s}\setminus B_{t}\right)}^{2}\leq\left(\rho_{r_{1}r_{2}}^{1}\right)^{-1}\left(\frac{s-t}{r_{1}-r_{2}}\right)^{2}\left\Vert \nabla u\right\Vert _{L^{2}(B_{s}\setminus B_{t})}^{2}$
for every $u\in H_{0}^{1}\left(B_{s}\setminus B_{t}\right)$.
\end{proof}

\subsection{\label{subsec:PfPaFx}Proof of \lemref{Whitney-Reduction-Lemma-Light}}
\begin{proof}
We have 
\[
\left(\begin{array}{c}
\JacSJ\left(u_{1},\mathcal{F}\right)\\
\vdots\\
\JacSJ\left(u_{P},\mathcal{F}\right)
\end{array}\right)=\JacJ\left(u_{1},\cdots,u_{P}\right)^{T}T\left(x,E_{d+1,d}f_{1},\cdots,E_{d+1,d}f_{\dstar},\text{e}_{d+1}\right).
\]
Thanks to proposition~\proref{MNT-Light} there holds $\text{rank}\left(f_{1},\cdots,f_{\dstar}\right)=d$.
Furthermore 
\[
\left(E_{d+1,d}f_{1},\cdots,E_{d+1,d}f_{\dstar}\right)\cap\mathbb{R}e_{d+1}=\left\{ 0\right\} ,
\]
thus proposition~\propref{current-matrix} shows that $\text{rank}\left(T\left(x,E_{d+1,d}f_{1},\cdots,E_{d+1,d}f_{\dstar},\text{e}_{d+1}\right)\right)=d+1$. 

Since $\left\{ u_{1},\cdots,u_{P}\right\} \in\mathcal{A}\left(P\right)$,
we have $\rank\JacJ\left(u_{1},\cdots,u_{P}\right)^{T}=d+1$ at every
$x$, thus $\rank F_{x}=d+1$.

Note that $\forall a\in\mathbb{R}^{P-1}$ $\rank P_{a}=P-1$ thus
for every $x$, we have:
\[
\rank P_{a}\circ F_{x}\leq\min\left(\rank P_{a},\rank F_{x}\right)\leq d+1
\]

and 
\[
\rank P_{a}\circ F_{x}\geq\rank P_{a}+\rank F_{x}-P=d
\]

If $a\in\R^{P-1}\setminus G$, then there exists $x\in\Omega$, such
that
\begin{eqnarray}
 &  & \rank P_{a}\circ F_{x}=d\nonumber \\
 & \Longleftrightarrow & \dim\ker\left(P_{a}\circ F_{x}\right)=\dstar+1-d\label{eq:PaFx}\\
 & \Longleftrightarrow & \dim F_{x}^{-1}\left(\mathop{{\rm span}}\left\{ \left(a_{1},\cdots,a_{P-1},1\right)\right\} \right)=\dstar+1-d\nonumber 
\end{eqnarray}

We have the implication $a\in\R^{P-1}\setminus G\Longrightarrow$$\left(a_{1},\cdots,a_{P-1},1\right)\in\cup_{x}\Im\left(F_{x}\right)$.
Conversely, if $\left(a_{1},\cdots,a_{P-1},1\right)\in\cup_{x}\Im\left(F_{x}\right)$
then there exists $x\in\Omega\setminus\cup_{i,j}\Gamma_{ij}$ and
$v_{a}\in\R^{\dstar+1}$ such that $F_{x}v_{a}=\left(a_{1},\cdots,a_{P-1},1\right)$.
Thus, 
\[
\mathbb{R}v_{a}\oplus\ker\left(F_{x}\right)\subset F_{x}^{-1}\left(\text{span}\left\{ \left(a_{1},\cdots,a_{P-1},1\right)\right\} \right).
\]
Even though the choice of $v_{a}$ is arbitrary, any other choice
would be in $\mathbb{R}v_{a}\oplus\ker\left(F_{x}\right)$, thus $\mathbb{R}v_{a}\oplus\ker\left(F_{x}\right)=F_{x}^{-1}\left(\text{span}\left\{ \left(a_{1},\cdots,a_{P-1},1\right)\right\} \right)$.
Note that since $\rank\left(F_{x}\right)=d+1$, $\dim\left(\ker\left(F_{x}\right)\right)=\dstar-d$,
therefore $\dim\left(\mathbb{R}v_{a}\oplus\ker\left(F_{x}\right)\right)=\dstar+1-d$,
which from \eqref{PaFx} implies that $\rank P_{a}\circ F_{x}=d$.

In conclusion, we have $P_{a}\circ F_{x}$ has rank $d+1$$\Longleftrightarrow\left(a_{1},\ldots a_{P-1},1\right)\not\in\cup_{x}\Im\left(F_{x}\right)$.
Set $B=\cup_{x}\Im\left(F_{x}\right)\cap\left\{ b\in\R^{P}|b_{P}=1\right\} $.
The identity $\R^{P-1}\setminus G=P_{P-1,P}\left(B\right)$ therefore
holds. We now follow the argument in \cite[Lemma 4.1]{2020-Alberti-Capdeboscq-IMRN}
and \cite{Greene-Wu-1975a} and deduce that $\mathcal{H}^{k-1}\left(B\right)=0$.
The conclusion is attained as the $P-1$-Hausdorff measure equals
to the $P-1$-Lebesgue measure.
\end{proof}

\section{\label{sec:Kervaire}Proof of \lemref{UnitBall}}

When $d\not\in\left\{ 2,4,8\right\} $ it is impossible to find a
group of continuous vector fields family $\left\{ h_{1},\ldots,h_{d}\right\} $
such that for every $x\in\partial B_{1}$, there holds
\begin{enumerate}
\item $h_{1}\left(x\right)=x\text{ ,}$
\item $\left\langle h_{i},h_{j}\right\rangle \left(x\right)=\delta_{ij}$
$\text{ for \ensuremath{i,j=1,\ldots d.}}$
\end{enumerate}
In odd dimensions, this is a consequence, of the so-called \emph{Hairy
ball }theorem. In general, the following result is proved in \cite{kervaire1958non}
and \cite{bott1958parallelizability}.
\begin{thm*}
\label{thm:Michel Kervaire} There exists trivial bundle of $S^{d-1}$
if and only if $d=2,4$ or $8$. Moreover, when $d\in\left\{ 2,4,8\right\} $
there exists $\left\{ h_{2},\cdots,h_{d}\right\} \in\left(C^{1}\left(S^{d-1};\R^{d}\right)\right)^{d-1}$
such that $\left(h_{1},\cdots,h_{d}\right)\in SO_{d}\left(S^{d-1}\right)$
where $SO_{d}$ refers to the real unitary matrices with positive
determinant.
\end{thm*}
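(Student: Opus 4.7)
The plan is to handle the two cases of the lemma by explicit constructions of rather different flavor; no deep topology (nor the negative half of the Bott--Milnor--Kervaire theorem) is needed.

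For the first case ($d \in \{2, 4, 8\}$), I would exploit the division algebra structure on $\mathbb{R}^d$, identified with $\mathbb{C}$, $\mathbb{H}$, or $\mathbb{O}$ respectively. Let $\{e_1, e_2, \ldots, e_d\}$ denote the standard orthonormal basis, with $e_1 = 1$ the multiplicative identity, and set
\[
h_i(x) := e_i x, \qquad i = 1, \ldots, d,
\]
where $e_i x$ is the algebra product. Since $e_1 = 1$, this matches the prescribed $h_1(x) = x$. The key property, valid for all three algebras (crucially including the non-associative octonions), is norm-multiplicativity $|ab| = |a||b|$: for fixed unit $x$, the right-multiplication map $R_x : v \mapsto v x$ is linear and norm-preserving, hence an isometry of $\mathbb{R}^d$, sending the orthonormal basis $\{e_i\}$ to the orthonormal family $\{R_x(e_i)\} = \{h_i(x)\}$. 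Smoothness in $x$ is immediate since $x \mapsto e_i x$ is linear in $x$. At $x = e_1$ we have $\HH_d(e_1) = I_d$, so $\det \HH_d(e_1) = +1$; by continuity and connectedness of $S^{d-1}$ (valid for $d \geq 2$) the determinant stays $+1$ throughout.

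For the second case ($d \geq 3$ with $d \notin \{2, 4, 8\}$), I read the stated codomain $\mathbb{R}^d$ as a typo for $\mathbb{R}^{d+1}$, in line with how the lemma is consumed in \secref{Whitney}, where $\HH_d$ takes values in $SO_{d+1}$ before the projection $\mathcal{P}$ is applied. I would construct a smooth map $\HH_d : S^{d-1} \to SO_{d+1}$ whose first column is $h_1(x) = (x, 0) \in \mathbb{R}^{d+1}$ by the explicit formula
\[
h_{i+1}(x) := \Phi_x(e_i), \qquad \Phi_x(v) := \bigl( v - (v \cdot x)\, x,\; v \cdot x \bigr) \in \mathbb{R}^{d+1}, \quad i = 1, \ldots, d,
\]
where $\cdot$ is the Euclidean inner product on $\mathbb{R}^d$. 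A direct computation gives $\Phi_x(e_i) \cdot (x,0) = 0$ and $\Phi_x(e_i) \cdot \Phi_x(e_j) = \delta_{ij}$, so $\{h_1(x), \ldots, h_{d+1}(x)\}$ is orthonormal in $\mathbb{R}^{d+1}$. Smoothness in $x$ is transparent. For orientation, evaluation at $x = e_1$ shows that $\HH_d(e_1)$ has columns $(e_1, e_{d+1}, e_2, e_3, \ldots, e_d)$, a permutation of the standard basis of signature $(-1)^{d-1}$; for even $d$ (i.e.\ $d \in \{6, 10, 12, \ldots\}$) this is corrected once and for all by swapping $h_d$ with $h_{d+1}$, ensuring $\det \HH_d \equiv +1$.

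I expect the only subtle point to be the orthogonality check for $d = 8$ in Case 1: because the octonions are non-associative one must avoid manipulating products freely, but the identity $|ab| = |a||b|$ alone—a fundamental property of the octonion norm—suffices to establish that $R_x$ is a linear isometry, so the proof is uniform across the three algebras. Case 2 requires no topology whatsoever: although $S^{d-1}$ fails to be parallelizable in the generic dimension, the orthogonal subbundle $h_1(x)^{\perp} \subset S^{d-1} \times \mathbb{R}^{d+1}$ is globally trivialized by the explicit isometry $\Phi_x$, and this concrete trivialization is exactly the geometric reason why the paper introduces the extra dimension encoded in $d^\star = d + 1$.
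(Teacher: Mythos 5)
Your treatment of the constructive half is correct and is essentially the paper's own: the explicit component formulas the paper lists for $d=2,4,8$ are precisely $h_{i}\left(x\right)=e_{i}x$ for the unit basis elements of $\mathbb{C}$, $\mathbb{H}$ and $\mathbb{O}$, and your observation that norm-multiplicativity $\left|ab\right|=\left|a\right|\left|b\right|$ alone makes $v\mapsto vx$ a linear isometry is a clean, uniform way to verify orthonormality (including the non-associative octonion case) that the paper leaves implicit. The determinant argument by continuity and connectedness of $S^{d-1}$ is also fine.

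The genuine gap is the first sentence of the statement. It is an ``if and only if'', and the ``only if'' direction --- that the tangent bundle of $S^{d-1}$ is trivial \emph{only} when $d\in\left\{ 2,4,8\right\} $ --- is exactly the negative half of the Bott--Milnor--Kervaire theorem that you declare unnecessary. No explicit construction can establish this non-existence assertion; it requires the deep input of \cite{bott1958parallelizability,kervaire1958non} (Bott periodicity, or equivalently the resolution of the Hopf-invariant-one problem). The paper does not prove it either: it states the result as a citation. So your proposal either must do the same, or it is incomplete as a proof of the statement as written; asserting that ``the negative half is not needed'' is a misreading of what is being claimed. Separately, your Case 2 addresses the second half of \lemref{UnitBall} (the $d+1$-vector frame for general $d\geq3$), which is not part of this theorem. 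As a construction it is nonetheless valid, and arguably tidier than the paper's: your $\Phi_{x}$ is a genuine linear isometry from $\R^{d}$ onto the orthogonal complement of $\left(x,0\right)$ in $\R^{d+1}$, so the resulting frame is orthonormal by construction, whereas the paper's matrix $\HH_{d}$ in \eqref{def_Hd} only has full rank with $\det\HH_{d}=\pm1$ (its rows are not unit vectors), and its membership in $SO_{d+1}$ after the final sign adjustment is asserted rather loosely.
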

Explicit examples are :
\begin{enumerate}
\item When $d=2$, $\forall\left(x_{1},x_{2}\right)\in\partial B_{1}$,
set $h_{1}=\left(x_{1},x_{2}\right)$ and $h_{2}=\left(-x_{2},x_{1}\right)$.
\item When $d=4$, $\forall\left(x_{1},x_{2},x_{3},x_{4}\right)\in\partial B_{1}$,
set
\begin{eqnarray*}
h_{1} & = & \left(x_{1},x_{2},x_{3},x_{4}\right),\\
h_{2} & = & \left(-x_{2},x_{1},-x_{4},x_{3}\right),\\
h_{3} & = & \left(x_{3},-x_{4},-x_{1},x_{2}\right),\\
h_{4} & = & \left(x_{4},x_{3},-x_{2},-x_{1}\right).
\end{eqnarray*}
\item When $d=8$, $\forall\left(x_{1},x_{2},x_{3},x_{4},x_{5},x_{6},x_{7},x_{8}\right)\in\partial B_{1}$
set
\begin{eqnarray*}
h_{1} & = & \left(x_{1},x_{2},x_{3},x_{4},x_{5},x_{6},x_{7},x_{8}\right),\\
h_{2} & = & \left(-x_{2},x_{1},-x_{4},x_{3},-x_{6},x_{5},x_{8},-x_{7}\right),\\
h_{3} & = & \left(-x_{3},x_{4},x_{1},-x_{2},-x_{7},-x_{8},x_{5},x_{6}\right),\\
h_{4} & = & \left(-x_{4},-x_{3},x_{2},x_{1},-x_{8},x_{7},-x_{6},x_{5}\right),\\
h_{5} & = & \left(-x_{5},x_{6},x_{7},x_{8},x_{1},-x_{2},-x_{3},-x_{4}\right),\\
h_{6} & = & \left(-x_{6},-x_{5},x_{8},-x_{7},x_{2},x_{1},x_{4},-x_{3}\right),\\
h_{7} & = & \left(-x_{7},-x_{8},-x_{5},x_{6},x_{3},-x_{4},x_{1},x_{2}\right),\\
h_{8} & = & \left(-x_{8},x_{7},-x_{6},-x_{5},x_{4},x_{3},-x_{2},x_{1}\right).
\end{eqnarray*}
\end{enumerate}
The second part of \lemref{UnitBall} follows from the following proposition.
\begin{prop*}
There exists $h_{2},\cdots,h_{d+1}$ in $\left(C^{1}\left(S^{d-1},\mathbf{\R}^{d}\right)\right)^{d}$
such that $\left\langle h_{i},x\right\rangle =0$, for $i=2,\cdots,d+1$
and $\rank\left(x,h_{2},\cdots,h_{d+1}\right)=d$ on $S^{d-1}$.
\end{prop*}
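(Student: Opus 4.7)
The plan is to give an explicit, elementary construction: let
\[
h_{i+1}(x) := e_i - x_i\, x, \qquad i = 1,\ldots, d,
\]
where $(e_1,\ldots,e_d)$ is the canonical basis of $\R^d$. In words, $h_{i+1}(x)$ is the orthogonal projection of $e_i$ onto the tangent hyperplane at $x$. These are polynomial in $x$, hence of class $C^\infty$ on $S^{d-1}$; in particular they belong to $C^1(S^{d-1};\R^d)$.

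I would first verify the tangency condition: on $S^{d-1}$,
\[
\langle h_{i+1}(x), x\rangle = \langle e_i, x\rangle - x_i |x|^2 = x_i - x_i = 0,
\]
which gives the first requirement. For the rank condition, I would use the decomposition that for every $v\in\R^d$,
\[
v = \sum_{i=1}^d v_i\, e_i = \sum_{i=1}^d v_i (h_{i+1}(x) + x_i x) = \sum_{i=1}^d v_i\, h_{i+1}(x) + (v\cdot x)\, x,
\]
so that $v\in\mathrm{span}(x,h_2,\ldots,h_{d+1})$ for every $x\in S^{d-1}$. Hence $\mathrm{rank}(x,h_2,\ldots,h_{d+1})=d$ pointwise, which is the second requirement.

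There is no real obstacle here: the whole point of the statement, compared with the $d\in\{2,4,8\}$ case of \lemref{UnitBall}, is that one is allowed $d$ tangent vector fields instead of only $d-1$. While by the Bott--Milnor--Kervaire theorem \cite{bott1958parallelizability,kervaire1958non} no family of $d-1$ continuous tangent vector fields can span $T_x S^{d-1}$ at every $x$ unless $d\in\{2,4,8\}$, the tangent bundle of any sphere is trivially generated by $d$ global sections, namely the projections of the canonical basis vectors onto the tangent plane, which is precisely the family $\{h_2,\ldots,h_{d+1}\}$ above. The extra vector permits linear dependence and therefore bypasses the topological obstruction.
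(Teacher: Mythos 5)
Your proof is correct, and your vector fields are essentially the paper's: up to sign and a reordering of indices, the paper also takes the projections of the canonical basis onto the tangent plane, writing $h_{i}(x)=x_{d+2-i}\,x-e_{d+2-i}$, which is $-h$ of yours. Where you differ is in the verification of the rank condition. You use the pointwise decomposition $v=\sum_{i}v_{i}h_{i+1}(x)+(v\cdot x)\,x$, which shows directly that $\span\left(x,h_{2},\cdots,h_{d+1}\right)=\R^{d}$ — shorter and more transparent than what the paper does. The paper instead augments the family into the $(d+1)\times(d+1)$ matrix $\HH_{d}$ of \eqref{def_Hd} (each row $h_{i}$ padded with an extra scalar entry) and proves by induction that $\det\HH_{d}=\left(-1\right)^{\frac{d\left(d+3\right)}{2}}$, deducing $\rank\left(h_{1},\cdots,h_{d+1}\right)=d$. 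The payoff of the paper's heavier computation is not the proposition itself but its later use: the explicit invertible matrix $\HH_{d}$ (after a sign adjustment, to land in the right connected component) is exactly the object fed into the path-connectedness argument in $SO_{d+1}$ in the proof of proposition~\proref{MNT-Light} for general dimensions. Your argument proves the stated proposition in full, but does not produce that auxiliary matrix; for the statement as given, nothing is missing.
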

\begin{proof}
For every $x\in S^{d-1}\subset\R^{d}$, we denote $x=\left(x_{1},x_{2},\cdots,x_{d}\right)$.
Set 
\[
h_{i}=\left(x_{1}x_{d+2-i}-\delta_{1,d+2-i},\cdots,x_{d}x_{d+2-i}-\delta_{d,d+2-i}\right),
\]
 where $\delta_{i,j}$ is the Kronecker symbol. We have $\left\langle h_{i},x\right\rangle =\left(\sum_{j=1}^{d}x_{j}^{2}x_{d+2-i}\right)-x_{d+2-i}=0$,
for $i\geq2$, thus each $h_{i}$ is tangent to $S^{d-1}$. Take
\begin{equation}
\HH_{d}=\left(\begin{array}{cc}
h_{1} & 1\\
h_{2} & x_{d}\\
\vdots & \vdots\\
h_{d+1} & x_{1}
\end{array}\right)_{\left(d+1\right)\times\left(d+1\right)}\text{, that is, }\HH_{d}=\left(\begin{array}{ccccc}
x_{1} & x_{2} & \ldots & x_{d} & 1\\
x_{1}x_{d} & x_{2}x_{d} & \ldots & x_{d}^{2}-1 & x_{d}\\
\vdots & \vdots & \vdots & \vdots & \vdots\\
x_{1}x_{2} & x_{2}^{2}-1 & \ldots & x_{d}x_{2} & x_{2}\\
x_{1}^{2}-1 & x_{1}x_{2} & \ldots & x_{d}x_{1} & x_{1}
\end{array}\right).\label{eq:def_Hd}
\end{equation}
There holds $\rank\HH_{d}=d+1$, for $d\geq2.$ The proof is by induction.
When $d=2$, we compute $\det\HH_{2}=-1.$ When $d\geq3$, we have
\[
\det\HH_{d}=\left|\begin{array}{ccccc}
x_{1} & x_{2} & \ldots & x_{d} & 1\\
0 & 0 & \ldots & -1 & 0\\
\vdots & \vdots & \vdots & \vdots & \vdots\\
x_{1}x_{2} & x_{2}^{2}-1 & \ldots & x_{d}x_{2} & x_{2}\\
x_{1}^{2}-1 & x_{1}x_{2} & \ldots & x_{d}x_{1} & x_{1}
\end{array}\right|=\left(-1\right)^{d+1}\det\HH_{d-1}=\ldots=\left(-1\right)^{\frac{d\left(d+3\right)}{2}}.
\]
Thus, we have $\rank\HH_{d}=d+1$, which implies $\rank\left(h_{1},\cdots,h_{d+1}\right)=d$.
We modify $h_{d+1}\rightarrow h_{d+1}\left(-1\right)^{\frac{d\left(d+3\right)}{2}}$
and modify the last line of $\HH_{d}$ to be $\left(h_{d+1}\left(-1\right)^{\frac{d\left(d+3\right)}{2}},\left(-1\right)^{\frac{d\left(d+3\right)}{2}}x_{1}\right)$
to obtain $\HH_{d}\in SO_{d+1}$.
\end{proof}

\section{\label{sec:EndProofMR}Proof of \thmref{mainresult}}
\begin{proof}
We reproduce the proof given in \cite{2020-Alberti-Capdeboscq-IMRN}
with the necessary adaptations for the reader's convenience. Thanks
to \thmref{piecewise_regular-construction}, and in turn \eqref{RoughNumber},
there exists a large $P_{0}$ such that $\mathcal{A}\left(P_{0}\right)\neq\emptyset$.
Write $P^{\star}=\left[\frac{d+\dstar+1}{\alpha}\right]$. Take $h\in H\left(\Omega\right)^{P^{\star}}$,
namely $h=\left(h_{1},\cdots,h_{P^{\star}}\right)$. Take $u_{1},\cdots,u_{P_{0}}\in\mathcal{A}\left(P_{0}\right).$
Then $\left(h_{1},\cdots,h_{P^{\star}},u_{1},\cdots,u_{P_{0}}\right)\in\mathcal{A}\left(P_{0}+P^{\star}\right)$,
and for $x\in\Omega\setminus\cup_{i\neq j}\Gamma_{ij},$
\[
\rank\JacJ\left(h_{1},\cdots,h_{P^{\star}},u_{1},\cdots,u_{P_{0}}\right)\left(x\right)=d+1.
\]
Thanks to \lemref{Main-reduction-lemma}, for a.e $a^{P_{0}+P^{\star}-1}\in\mathbb{R}^{P_{0}+P^{\star}-1}$
, there holds
\[
\rank\JacJ\left(h_{1}-a_{1}^{P_{0}+P^{\star}-1}u_{P_{0}},\cdots,h_{P^{\star}}-a_{P^{\star}}^{P_{0}+P^{\star}-1}u_{P_{0}},\cdots,u_{P_{0}-1}-a_{P_{0}+P^{\star}-1}^{P_{0}+P^{\star}-1}u_{P_{0}}\right)\left(x\right)=d+1.
\]
Repeating this reduction $P_{0}$ times, for a.e $a^{T}=\left(a_{1}^{T},\cdots,a_{T}^{T}\right)\in\mathbb{R}^{T}$,
where $T=\left(P^{\star},\cdots,P_{0}+P^{\star}-1\right)$, there
holds
\[
\rank\JacJ\left(h_{1}-\sum_{T=P^{\star}}^{P_{0}+P^{\star}-1}a_{1}^{T}u_{T-P^{\star}+1},\cdots,h_{P^{\star}}-\sum_{T=P^{\star}}^{P_{0}+P^{\star}-1}a_{P^{\star}}^{T}u_{T-P^{\star}+1}\right)\left(x\right)=d+1,
\]
which means $h_{a^{T}}=\left(h_{1}-\sum_{T=P^{\star}}^{P_{0}+P^{\star}-1}a_{1}^{T}u_{T-P^{\star}+1},\cdots,h_{P^{\star}}-\sum_{T=P^{\star}}^{P_{0}+P^{\star}-1}a_{P^{\star}}^{T}u_{T-P^{\star}+1}\right)\in\mathcal{A}\left(P^{\star}\right)$.
For any $\epsilon>0$, taking $a^{T}$ small enough, since $u_{1},\cdots,u_{P_{0}}$
are bounded in $H\left(\Omega\right)$, we conclude that 
\[
\left\Vert h-h_{a^{T}}\right\Vert _{H\left(\Omega\right)^{P^{\star}}}\leq\epsilon.
\]
We then prove that $\mathcal{A}\left(P^{\star}\right)$ is an open
set. For any $x\in\overline{\Omega},\mathbf{u=}\left(u_{1},\cdots,u_{P^{\star}}\right)\in H\left(\Omega\right)^{P^{\star}}$,
we define $\mathbf{Det}:\text{\ensuremath{\overline{\Omega}}}\times H\left(\Omega\right)^{P^{\star}}\rightarrow\mathbb{R}$
given by
\[
\mathbf{Det}\left(x,\mathbf{u}\right)\coloneqq\sum_{i_{1},\cdots,i_{d+1}=1}^{P^{\star}}\text{det}\left|\left(\JacSJ\left(u_{i_{1}},\cdots,u_{i_{d+1}}\right)\right)\left(x\right)\right|
\]

Thanks to the continuity and the multi-linearity of $\JacSJ$, $\mathbf{Det}\left(x,\mathbf{u}\right)$
is continuous for every $x\in\overline{\Omega},\mathbf{u=}\left(u_{1},\cdots,u_{P^{\star}}\right)\in H\left(\Omega\right)^{P^{\star}}$.
Take $\mathbf{u}\in\mathcal{A}\left(P^{\star}\right)$, for every
$x\in\overline{\Omega}$, there holds
\[
\mathbf{Det}\left(x,\mathbf{u}\right)>0
\]

Therefore, there exists some constant $C>0$ such that
\[
\text{inf}_{x\in\overline{\Omega}}\mathbf{Det}\left(x,\mathbf{u}\right)\geq C>0
\]

Take $\epsilon>0$ small enough and $\mathbf{v}=\left(v_{1},\cdots,v_{P^{\star}}\right)\in H\left(\Omega\right)^{P^{\star}}$
such that

\[
\left\Vert \mathbf{u}-\mathbf{v}\right\Vert _{H\left(\Omega\right)^{P^{\star}}}=\sum_{i=1}^{P^{\star}}\left\Vert u_{i}-v_{i}\right\Vert _{H\left(\Omega\right)}\leq\epsilon\text{ and }\mathbf{Det}\left(x,\mathbf{v}\right)\geq\frac{C}{2}>0,
\]
which implies $\rank\left(\JacSJ\left(v_{1},\cdots,v_{P^{\star}}\right)\right)=d+1$.
Thanks to the relation between $\JacJ$ and $\JacSJ$, we conclude
that $\rank\left(\JacJ\left(v_{1},\cdots,v_{P^{\star}}\right)\right)=d+1$
which implies $\mathbf{v}\in\mathcal{A}\left(P^{\star}\right)$. 
\end{proof}

\end{document}